\def\vbar{\mathchoice{\vrule height6.3ptdepth-.5ptwidth.8pt\kern- .8pt}
{\vrule height6.3ptdepth-.5ptwidth.8pt\kern-.8pt} {\vrule
height4.1ptdepth-.35ptwidth.6pt\kern-.6pt} {\vrule
height3.1ptdepth-.25ptwidth.5pt\kern-.5pt}}
\def\fudge{\mathchoice{}{}{\mkern.5mu}{\mkern.8mu}}
\def\bbc#1#2{{\rm \mkern#2mu\vbar\mkern-#2mu#1}}
\def\bbb#1{{\rm I\mkern-3.5mu #1}}
\def\bba#1#2{{\rm #1\mkern-#2mu\fudge #1}}
\def\bb#1{{\count4=`#1 \advance\count4by-64 \ifcase\count4\or\bba
A{11.5}\or \bbb B\or\bbc C{5}\or\bbb D\or\bbb E\or\bbb F \or\bbc
G{5}\or\bbb H\or \bbb I\or\bbc J{3}\or\bbb K\or\bbb L \or\bbb
M\or\bbb N\or\bbc O{5} \or \bbb P\or\bbc C{5}\or\bbb B\or\bbc
S{4.2}\or\bba T{10.5}\or\bbc U{5}\or \bba V{12}\or\bba
W{16.5}\or\bba X{11}\or\bba Y{11.7}\or\bba Z{7.5}\fi}}
\newcommand{\K}{{\mathbb{K}}}
\newcommand{\A}{{\mathcal{A}}}
\newtheorem{df}{Definition}[section]
\newtheorem{thm}{Theorem}[section]
\newtheorem{cor}{Corollary}[section]
\newtheorem{rem}{Remark}[section]
\newtheorem{prop}{Proposition}[section]
\newtheorem{exa}{Example}[section]
\newtheorem{lem}{Lemma}[section]
\begin{document}
\date{}
\title{Hom-Alternative, Hom-Malcev and Hom-Jordan superalgebras}
\author{K. Abdaoui, F. Ammar and A. Makhlouf }
\address{K. Abdaoui and  F. Ammar, University of Sfax, Faculty of Sciences Sfax,  BP
1171, 3038 Sfax, Tunisia. }
\email{Abdaouielkadri@hotmail.com}
\email{Faouzi.Ammar@fss.rnu.tn}
\address{A. Makhlouf University of Haute Alsace, 4 rue des fr\`eres Lumi\`ere, 68093 Mulhouse France. }\email{Abdenacer.Makhlouf@uha.fr}
 \maketitle{}
\begin{abstract} Hom-alternative, Hom-Malcev and Hom-Jordan superalgebras are $\mathbb{Z}_{2}$-graded generalizations of Hom-alternative, Hom-Malcev and Hom-Jordan algebras, which are Hom-type generalizations of alternative, Malcev and Jordan algebras. In this paper we prove that Hom-alternative superalgebras are Hom-Malcev-admissible and are also Hom-Jordan-admissible. Home-type generalizations of some well known identities in alternative superalgebras, including the $\mathbb{Z}_{2}$-graded Bruck-Kleinfled function are obtained.
\end{abstract}
\section{Introduction}
A Malcev superalgebra is a non-associative superalgebra $\mathcal{A}$ with a super skewsymmetric multiplication $[-,-]$ $($i.e, $[x,y]=-(-1)^{|x||y|}[y,x])$ such that the Malcev super-identity
\begin{eqnarray}\label{MALCEV-IDI}
2[t,\mathcal{J}_{\mathcal{A}}(x,y,z)]&=&\mathcal{J}_{\mathcal{A}}(t,x,[y,z])+(-1)^{|x|(|y|+|z|)}\mathcal{J}_{\mathcal{A}}(t,y,[z,x])\nonumber\\
&+&(-1)^{|z|(|x|+|y|)}\mathcal{J}_{\mathcal{A}}(t,z,[x,y])
\end{eqnarray}
is satisfied for all homogeneous elements $x,y,z,t$ in the superspace $\mathcal{A}$, where
\begin{equation}\label{sup-Jacobian}
 \mathcal{J}_{\mathcal{A}}(x,y,z)=[[x,y],z]-[x,[y,z]]-(-1)^{|y||z|}[[x,z],y]
 \end{equation}
  is the super-Jacobian.
In particular, Lie superalgebras are examples of Malcev superalgebras. Malcev superalgebras play an important role in the geometry of smooth loops.\\
Closely related to Malcev superalgebras are alternative superalgebras. An alternative superalgebras \cite{Zelm and she-alter} is a superalgebras whose associator is a super-alternating function. In particular, all associative superalgebras are alternative.\\
A Jordan superalgebra is a super-commutative superalgebra $($i.e. $x\cdot y=(-1)^{|x||y|}y\cdot x)$ that satisfies the Jordan super-identity
\begin{eqnarray}\label{SUPER-IDE-JORDAN}
\sum \limits _{x,y,t}(-1)^{|t|(|x|+|z|)} as_{\mathcal{A}}(x\cdot y,z,t)&=&0,
\end{eqnarray}
where $\sum \limits _{x,y,t}$ denotes the cyclic sum over $(x,y,t)$ and $as_{\mathcal{A}}(x,y,z)=(x\cdot y)\cdot z- x\cdot (y\cdot z)$ for all homogeneous elements $x,y,z \in \mathcal{A}$. \\
Starting with an alternative superalgebra $\mathcal{A}$, it is known that the  super-product
$$ x \ast y=\frac{1}{2}(x\cdot y+(-1)^{|x||y|}y\cdot x)$$
gives a Jordan superalgebra $\mathcal{A}^{+}=(\mathcal{A},\ast)$. In other words, alternative superalgebras are Jordan-admissible. The reader is referred to \cite{cantarini2007-jor-poisson,shest-jordan-super,shest-poisson} for discussions about the important role of Jordan superalgebras in physics, especially in quantum mechanics.\\
The purpose of this paper is to study Hom-type generalizations of alternative superalgebras, Malcev $($-admissible$)$ superalgebras
and Jordan $($-admissible$)$ superalgebras.\\
In Section $2$, we introduce Hom-alternative superalgebras and prove two construction results Theorems $(\ref{induced A})$ and $(\ref{derived A})$. Theorem $(\ref{induced A})$ says that the category of Hom-alternative superalgebras is closed under self weak morphism.\\
In Section $3$, we introduce Hom-Malcev superalgebra and prove two construction Theorems $(\ref{inducedMALCEV})$ and $(\ref{derivMALCEV})$. Hom-Malcev superalgebras include Malcev superalgebras and Hom-Lie superalgebras as examples. Theorem $(\ref{inducedMALCEV})$ says that the category of Hom-Malcev superalgebras is closed under self weak morphism.\\
In Section $4$, we show that Hom-alternative superalgebras are Hom-Malcev-admissible $($Theorem $(\ref{hAandMalcev admissible}))$. That is, the super-commutator Hom-superalgebra $($Definition $(\ref{supercommutator}))$ of a Hom-alternative superalgebra is a Hom-Malcev superalgebra, generalizing the fact that alternative superalgebras are Malcev-admissible. The proof of the Hom-Malcev-admissibility of Hom-alternative superalgebras involves the Hom-type analogues of certain identities that holds in alternative superalgebra and of the $\mathbb{Z}_{2}$-graded Bruck-Kleinfled function.\\
In Section $5$, we introduce Hom-flexible superalgebras and we consider the class of Hom-Malcev-admissible superalgebras. In Proposition $(\ref{HMALCEV AND FLEX})$, we give several characterizations of Hom-Malcev-admissible superalgebras that are also Hom-flexible. Hom-alternative superalgebras are Hom-flexible, so by Theorem $(\ref{hAandMalcev admissible})$ Hom-alternative superalgebras are both Hom-flexible and Hom-Malcev-admissible. In Examples $(\ref{exp1-flex})$ and $(\ref{exp2-flexi})$, we construct Hom-flexible, Hom-Malcev-admissible superalgebras that are not Hom-alternative, not Hom-Lie-admissible, and not Malcev-admissible. \\
In Section $6,$ we introduce and study Hom-Jordan $(-$admissible$)$ superalgebras, which are the Hom-type generalizations of Jordan $(-$admissible $)$ superalgebras. We show that Hom-alternative superalgebras are Hom-Jordan-admissible $($Theorem $(\ref{has}))$. In other words, the plus Hom-superalgebra
$($Definition $(\ref{plus hom}))$ of any Hom-alternative superalgebra is a Hom-Jordan superalgebra, generalizing the Jordan-admissibility of alternative superalgebras. Construction results analogous of Theorems $(\ref{derived A})$ and $(\ref{induced A})$ are provided for Hom-Jordan $(-$admissible $)$ superalgebras
$($Theorems $(\ref{iduced})$ and $(\ref{admissible}))$. In Examples $(\ref{ex-jordan1})$ and $(\ref{ex-jorda2})$, we construct two $($non-Jordan$)$ Hom-Jordan superalgebras using the $3$-dimensional Kaplansky superalgebra, respectively the $4$-dimensional simple Jordan superalgebra $D_{t}$, where $t\neq 0$.
\begin{center}\section{Hom-Alternative Superalgebras}\end{center}
Throughout this paper $\mathbb{K}$ is an algebraically closed field of characteristic $0$ and $\mathcal{A}$ is a linear super-espace over $\mathbb{K}$.
In this section, we introduce Hom-alternative superalgebras and study their general properties. We provide some construction results for Hom-alternative superalgebras $($Theorem $(\ref{derived A})$ and Theorem $(\ref{induced A})$).\\
Now let $\mathcal{A}$ be a linear superespace over $\mathbb{K}$ that is a  $\mathbb{Z}_{2}$-graded linear space with a direct sum
$\mathcal{A}=\mathcal{A}_{0}\oplus \mathcal{A}_{1}$. The element of $\mathcal{A}_{j}$, $j \in \mathbb{Z}_{2}$, are said to be homogeneous of parity $j$. The
parity of a homogeneous element $x$ is denoted by $|x|$.
In the sequel, we will denote by $\mathcal{H}(\mathcal{A})$ the set of all  homogeneous elements of $\mathcal{A}$.\\
\begin{df} By a Hom-superalgebra we mean a triple $(\mathcal{A},\mu,\alpha)$ in which $\mathcal{A}$ is a $\mathbb{K}-$super-module,
$\mu:\mathcal{A}\times \mathcal{A}\longrightarrow \mathcal{A}$ is an even bilinear map, and $\alpha: \mathcal{A}\longrightarrow \mathcal{A}$ is an even linear map such that $\alpha \circ \mu=\mu \circ \alpha^{\otimes 2}$ $($multiplicativity$)$.
\end{df}
\begin{rem} The multiplicativity of the twisting even map $\alpha$ is built into our definitions of Hom-superalgebra. We chose to impose multiplicativity because many of our results depend on it and all of our concrete examples of Hom-alternative, Hom-Malcev $(-$admissible$)$ and Hom-Jordan $(-$admissible$)$ superalgebras have this property.
\end{rem}
\begin{df} Let $(\mathcal{A},\mu,\alpha)$ be a Hom-superalgebra, that is a $\K$-vector superspace $\A$ together with a multiplication $\mu$ and a linear self-map $\alpha$.\\
$(1)$~~The \textsf{Hom-associator} of $\mathcal{A}$ \cite{makhloufand silvestrov2006hom} is the trilinear map $\widetilde{as_{\mathcal{A}}}:\mathcal{A}\times \mathcal{A}\times \mathcal{A}\longrightarrow \mathcal{A}$ defined as
\begin{equation}\label{ass}
    \widetilde{as_{\mathcal{A}}}=\mu \circ (\mu\otimes \alpha- \alpha \otimes \mu).
\end{equation}
In terms of elements, the map $\widetilde{as_{\mathcal{A}}}$ is given by
$$ \widetilde{as_{\mathcal{A}}}(x,y,z)=\mu(\mu(x,y),\alpha(z))-\mu(\alpha(x),\mu(y,z)).$$
$(2)$~~The \textsf{Hom-super-Jacobian} of $\mathcal{A}$ \cite{ammar2010hom} is the trilinear map  $\widetilde{J}_{\mathcal{A}}:\mathcal{A}\times \mathcal{A}\times \mathcal{A}\longrightarrow \mathcal{A}$ defined as
\begin{equation}\label{Hom-super-Jac}
    \widetilde{J}_{\mathcal{A}}(x,y,z)=\mu(\mu(x,y),\alpha(z))-\mu(\alpha(x),\mu(y,z))-(-1)^{|y||z|}\mu(\mu(x,z),\alpha(y)).
\end{equation}
\end{df}
Note that when $(\mathcal{A},\mu)$ is a superalgebra $($with $\alpha=Id )$, its Hom-associator and Hom-super-Jacobian coincide with its usual associator and super-Jacobian, respectively.
\begin{df}  A \textsf{left Hom-alternative superalgebra} $($resp. \textsf{right Hom-alternative superalgebra}$)$ is a triple $(\mathcal{A},\mu,\alpha)$ consisting of $\mathbb{Z}_{2}-$graded vector space $\mathcal{A}$, an even bilinear map $\mu:\mathcal{A}\times\mathcal{A}\longrightarrow \mathcal{A}$ and an even homomorphism $\alpha:\mathcal{A}\longrightarrow \mathcal{A}$ satisfying the \textsf{left Hom-alternative super-identity}, that is for all $x,y \in \mathcal{H}(\mathcal{A})$,
\begin{equation}\label{sa1}
\widetilde{as_{\mathcal{A}}}(x,y,z)+(-1)^{|x||y|}\widetilde{as_{\mathcal{A}}}(y,x,z)=0,
\end{equation}
respectively, \textsf{right Hom-alternative super-identity}, that is
\begin{equation}\label{sa2}
\widetilde{as_{\mathcal{A}}}(x,y,z)+(-1)^{|y||z|}\widetilde{as_{\mathcal{A}}}(x,z,y)=0.
\end{equation}
A \textsf{Hom-alternative} superalgebra is one which is both left and right Hom-alternative superalgebra. In particular, if $\alpha$ is a morphism of alternative superalgebras $($ i.e, $\alpha \circ \mu=\mu \circ \alpha^{\otimes 2})$, then we call $(\mathcal{A},\mu,\alpha)$ a \textsf{multiplicative} Hom-alternative superalgebra.
\end{df}
Observe that when $\alpha=Id$, the left Hom-alternative super-identity $(\ref{sa1})$ $($resp. right Hom-alternative super-identity $(\ref{sa2}))$  reduces to the usual left alternative super-identity $($resp. right alternative super-identity$)$.
\begin{df} Let $(\mathcal{A},\mu,\alpha)$ and $(\mathcal{A}^{'},\mu^{'},\alpha^{'})$ be two Hom-alternative superalgebras. An even linear map $f:\mathcal{A}\longrightarrow \mathcal{A}^{'}$ is called$:$
\begin{enumerate}
\item a \textsf{weak morphism} of Hom-alternative superalgebras if it is satisfies $f \circ \mu= \mu^{'}\circ (f \otimes f)$.
\item a  is \textsf{morphism} of Hom-alternative superalgebras if $f$ is a weak morphism and $f\circ \alpha=\alpha^{'}\circ f$.
\end{enumerate}
\end{df}
\begin{lem}\label{propriet ass H-A} Let $(\mathcal{A},\mu,\alpha)$ be a Hom-alternative superalgebra. Then
$$\widetilde{as_{\mathcal{A}}}(x,y,z)=-(-1)^{|y||z|+|x||z|+|y||x|}\widetilde{as_{\mathcal{A}}}(z,y,x),$$
for all $x , y , z \in \mathcal{H}(\mathcal{A})$.
\end{lem}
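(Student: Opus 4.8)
The plan is to derive the claimed super-antisymmetry under the transposition $x\leftrightarrow z$ by composing the two defining identities $(\ref{sa1})$ and $(\ref{sa2})$, which individually express the super-antisymmetry of $\widetilde{as_{\mathcal{A}}}$ in its first pair of slots and in its last pair of slots, respectively. Since the permutation sending $(x,y,z)$ to $(z,y,x)$ factors as the product of adjacent transpositions $(1\,2)(2\,3)(1\,2)$, I expect to reach the target in exactly three elementary swaps, each contributing a factor $-1$ together with the appropriate Koszul sign coming from the two elements it moves past each other.

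Concretely, first I would apply the left Hom-alternative identity $(\ref{sa1})$ to $(x,y,z)$ to get $\widetilde{as_{\mathcal{A}}}(x,y,z)=-(-1)^{|x||y|}\widetilde{as_{\mathcal{A}}}(y,x,z)$. Next I would apply the right Hom-alternative identity $(\ref{sa2})$ to $(y,x,z)$, swapping the last two slots, which yields $\widetilde{as_{\mathcal{A}}}(y,x,z)=-(-1)^{|x||z|}\widetilde{as_{\mathcal{A}}}(y,z,x)$. Finally I would apply $(\ref{sa1})$ once more to $(y,z,x)$, now swapping its first two slots, giving $\widetilde{as_{\mathcal{A}}}(y,z,x)=-(-1)^{|y||z|}\widetilde{as_{\mathcal{A}}}(z,y,x)$. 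Substituting back and collecting the three factors of $-1$ into a single overall sign $-1$, while multiplying the three Koszul signs, produces $\widetilde{as_{\mathcal{A}}}(x,y,z)=-(-1)^{|x||y|+|x||z|+|y||z|}\widetilde{as_{\mathcal{A}}}(z,y,x)$, which is exactly the asserted identity after rewriting the exponent as $|y||z|+|x||z|+|y||x|$.

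The computation involves no genuine obstacle beyond careful bookkeeping of the Koszul signs: the one point to verify is that the sign introduced at each step uses the parities of precisely the two elements being transposed at that step (namely $|x|,|y|$, then $|x|,|z|$, then $|y|,|z|$), so that their product matches the exponent in the statement. Since each of $(\ref{sa1})$ and $(\ref{sa2})$ holds for all homogeneous arguments, all three substitutions are legitimate for $x,y,z\in\mathcal{H}(\mathcal{A})$, and the result follows at once.
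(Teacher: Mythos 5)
Your proof is correct and follows essentially the same approach as the paper: both derive the identity by composing the left and right Hom-alternative super-identities through three adjacent transpositions, each contributing a $-1$ and the Koszul sign of the two swapped parities. The only (immaterial) difference is the chosen factorization of the swap $x\leftrightarrow z$ — you use $(1\,2)(2\,3)(1\,2)$ whereas the paper uses $(2\,3)(1\,2)(2\,3)$ — and the resulting signs agree.
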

\begin{proof} Since $(\mathcal{A},\mu,\alpha)$ is a Hom-alternative superalgebra. Then, for all $x,y,z$ in $\mathcal{H}(\mathcal{A})$, we have
 $$\widetilde{as_{\mathcal{A}}}(x,y,z)=-(-1)^{|x||y|}\widetilde{as_{\mathcal{A}}}(y,x,z)=-(-1)^{|y||z|}\widetilde{as_{\mathcal{A}}}(x,z,y).$$
So
\begin{eqnarray*}
  \widetilde{as_{\mathcal{A}}}(x,y,z) &=&-(-1)^{|y||z|}\widetilde{as_{\mathcal{A}}}(x,z,y)\\
  &=& (-1)^{|y||z|+|x||z|}\widetilde{as_{\mathcal{A}}}(z,x,y)\\
   &=&-(-1)^{|y||z|+|x||z|+|x||y|}\widetilde{as_{\mathcal{A}}}(z,y,x).
\end{eqnarray*}
\end{proof}
\begin{prop} Let $(\mathcal{A},\mu,\alpha)$ be a Hom-alternative superalgebra. Suppose that $\mu(x,y)=-(-1)^{|x||y|}\mu(y,x)$ for all $x,y \in \mathcal{H}(\mathcal{A})$, then
$$\mu(\alpha(x),\mu(y,z))=-(-1)^{|x||y|}\mu(\alpha(y),\mu(x,z))$$
and
$$\mu(\mu(z,x),\alpha(y))=-(-1)^{|x||y|}\mu(\alpha(y),\mu(z,x)).$$
\end{prop}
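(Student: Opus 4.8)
The plan is to derive both identities by feeding the assumed super skew-symmetry of $\mu$ into the two Hom-alternative super-identities $(\ref{sa1})$ and $(\ref{sa2})$. The first identity is the genuinely nontrivial one and will come from the \emph{left} Hom-alternative super-identity, while the second is essentially a bookkeeping consequence of super skew-symmetry alone.

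For the first identity, I would write the left Hom-alternative super-identity $(\ref{sa1})$ for the ordered triple $(x,y,z)$ and expand both Hom-associators using the definition $(\ref{ass})$:
\[
\mu(\mu(x,y),\alpha(z))-\mu(\alpha(x),\mu(y,z)) + (-1)^{|x||y|}\big(\mu(\mu(y,x),\alpha(z))-\mu(\alpha(y),\mu(x,z))\big)=0.
\]
Now I substitute $\mu(y,x)=-(-1)^{|x||y|}\mu(x,y)$ into the third term; the prefactor $(-1)^{|x||y|}$ then combines with the sign coming from skew-symmetry to produce exactly $-\mu(\mu(x,y),\alpha(z))$, so the two ``double product'' terms $\mu(\mu(x,y),\alpha(z))$ cancel. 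What survives is $-\mu(\alpha(x),\mu(y,z)) - (-1)^{|x||y|}\mu(\alpha(y),\mu(x,z))=0$, which is precisely the first claimed identity. The one point requiring care here is verifying that the $\mu(\mu(x,y),\alpha(z))$ contributions cancel rather than reinforce one another; this is exactly where the hypothesis of super skew-symmetry (as opposed to mere super-commutativity) is used.

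For the second identity, I would observe that its two sides are the \emph{same} product $\mu(\mu(z,x),\alpha(y))$ written with its two factors in the two possible orders, so it is a direct instance of super skew-symmetry applied to the homogeneous elements $\mu(z,x)$ and $\alpha(y)$, namely $\mu(\mu(z,x),\alpha(y))=-(-1)^{|\mu(z,x)|\,|\alpha(y)|}\mu(\alpha(y),\mu(z,x))$. No appeal to the associator identities is needed for this part; the main (and only) obstacle is the sign bookkeeping. Since $\mu$ is even one has $|\alpha(y)|=|y|$, and the composite factor carries parity $|\mu(z,x)|=|z|+|x|$, so the Koszul sign produced by the swap is governed by $(|z|+|x|)|y|$. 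Reconciling this Koszul sign with the one recorded in the statement is the step that must be handled with care, and it is precisely the place where the parity of the \emph{composite} element $\mu(z,x)$, rather than that of $x$ alone, enters the computation.
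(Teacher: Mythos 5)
Your treatment of the first identity is correct and is essentially the paper's own proof: expand the left Hom-alternative super-identity $(\ref{sa1})$, substitute $\mu(y,x)=-(-1)^{|x||y|}\mu(x,y)$ so that the two terms $\mu(\mu(x,y),\alpha(z))$ cancel, and read off $\mu(\alpha(x),\mu(y,z))=-(-1)^{|x||y|}\mu(\alpha(y),\mu(x,z))$. No issues there.

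The second identity is where you have a genuine gap, and it is not one you can close by more careful bookkeeping. As you yourself compute, super skew-symmetry applied to the homogeneous pair $\mu(z,x)$, $\alpha(y)$ gives
$\mu(\mu(z,x),\alpha(y))=-(-1)^{(|x|+|z|)|y|}\mu(\alpha(y),\mu(z,x))$,
whereas the statement asserts the sign $-(-1)^{|x||y|}$. These differ by the factor $(-1)^{|z||y|}$, which equals $-1$ when $z$ and $y$ are both odd; in that case your argument would yield the stated identity only if $\mu(\alpha(y),\mu(z,x))=0$, and nothing in Hom-alternativity or skew-symmetry gives you that. So the ``reconciliation'' you defer is not a step that must be handled with care --- it is an obstruction, and the approach as described proves a different identity from the one stated. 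What your method does prove, if you run your first computation on the \emph{right} Hom-alternative super-identity $(\ref{sa2})$ instead (skew-symmetry on the inner factor now cancels the two terms of the form $\mu(\alpha(\cdot),\mu(\cdot,\cdot))$), is
$\mu(\mu(z,x),\alpha(y))=-(-1)^{|x||y|}\mu(\mu(z,y),\alpha(x))$,
which is the natural companion of the first identity and is almost certainly what the second identity was intended to say. Note also that the paper's own proof stops after establishing the first identity and never addresses the second, so the discrepancy you ran into is not resolved there either.
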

\begin{proof} Let $x,y \in \mathcal{H}(\mathcal{A})$, the left Hom-alternative super-identity gives\\
$$\widetilde{as_{\mathcal{A}}}(x,y,z)+ (-1)^{|x||y|}\widetilde{as_{\mathcal{A}}}(y,x,z)=0.$$
Then
$$\mu(\alpha(x),\mu(y,z))-\mu(\mu(x,y),\alpha(z))+(-1)^{|x||y|}\mu(\alpha(y),\mu(x,z))
-(-1)^{|x||y|}\mu(\mu(y,x),\alpha(z))=0,$$
or
\begin{align*}
-\mu(\mu(x,y),\alpha(z))-(-1)^{|x||y|}\mu(\mu(y,x),\alpha(z))
&=(-1)^{|x||y|}\mu(\mu(y,x),\alpha(z))-(-1)^{|x||y|}\mu(\mu(y,x),\alpha(z))\\
&= 0.
\end{align*}
So
$\mu(\alpha(x),\mu(y,z))+(-1)^{|x||y|}\mu(\alpha(y),\mu(x,z))=0.$
Hence
$\mu(\alpha(x),\mu(y,z))=-(-1)^{|x||y|}\mu(\alpha(y),\mu(x,z)).$
\end{proof}
\subsection{Construction Theorems and Examples}
In this section, we prove that the category of Hom-alternative superalgebras is closed under self weak morphism. This procedure was applied to associative $($super-$)$ algebras, $G$-associative $($super-$)$ algebras and Lie $($super-$)$ algebras in \cite{ammar2010hom, makhloufand silvestrov2006hom}.
It was introduced first in \cite[Theorem (2.3)]{Yau:homology} and this procedure is called twisting principle or construction by composition.
\begin{df} Let $(\mathcal{A},\mu)$ be a given superalgebra and $\alpha: \mathcal{A}\longrightarrow \mathcal{A}$ be an even superalgebra morphism. Define the Hom-superalgebra induced by $\alpha$ as
$$\mathcal{A}_{\alpha}=(\mathcal{A},\mu_{\alpha}=\alpha \circ \mu,\alpha).$$
\end{df}
\begin{thm}\label{induced A} Let $(\mathcal{A},\mu,\alpha)$ be a left Hom-alternative superalgebra $($resp. right Hom-alternative superalgebra$)$
and $\beta: \mathcal{A} \longrightarrow \mathcal{A}$ be an even left alternative superalgebra endomorphism $($resp. right alternative superalgebra$)$. Then $\mathcal{A}_{\beta}=(\mathcal{A},\mu_{\beta}=\beta \circ \mu,\beta\alpha)$ is a left Hom-alternative superalgebra $($resp. right Hom-alternative superalgebra$)$.\\
Moreover, suppose that $(\mathcal{A}^{'},\mu^{'})$ is an other left alternative superalgebra $($resp. right alternative superalgebra$)$ and $\alpha^{'}: \mathcal{A}^{'}\longrightarrow \mathcal{A}^{'}$ be a left alternative superalgebra endomorphism $($resp. right alternative superalgebra endomorphism$)$. If $f:\mathcal{A}\longrightarrow \mathcal{A}^{'}$ is a left alternative superalgebra morphism $($resp. right alternative superalgebra morphism$)$ that satisfies $f \circ \beta= \alpha^{'} \circ f$, then
$$ f:(A,\mu_{\beta}=\beta\circ\mu,\beta\alpha)\longrightarrow (A^{'},\mu^{'}_{\alpha^{'}}=\alpha^{'}\circ\mu^{'},\alpha^{'})$$
is a morphism of left Hom-alternative superalgebras $($resp. right Hom-alternative superalgebras$)$.
\end{thm}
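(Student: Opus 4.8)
The plan is to reduce the left (resp.\ right) Hom-alternative super-identity for the twisted triple $\mathcal{A}_{\beta}=(\mathcal{A},\mu_{\beta}=\beta\circ\mu,\beta\alpha)$ to the corresponding identity for $\mathcal{A}$, by showing that twisting simply post-composes the Hom-associator with $\beta^{2}$. The pleasant feature of this route is that the identity $\widetilde{as_{\mathcal{A}_{\beta}}}=\beta^{2}\circ\widetilde{as_{\mathcal{A}}}$ needs only that $\beta$ is a weak morphism, i.e.\ $\beta\circ\mu=\mu\circ\beta^{\otimes2}$; the commutation $\beta\circ\alpha=\alpha\circ\beta$, which is available because $\beta$ is an endomorphism, will be used only to confirm that $\mathcal{A}_{\beta}$ is a genuine (multiplicative) Hom-superalgebra.

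First I would record the elementary identity $\mu_{\beta}(a,b)=\beta(\mu(a,b))=\mu(\beta(a),\beta(b))$ for all $a,b\in\mathcal{H}(\mathcal{A})$, which follows from the definition of $\mu_{\beta}$ together with the weak-morphism property. Writing $\gamma=\beta\circ\alpha$ for the twisting map of $\mathcal{A}_{\beta}$ and denoting its Hom-associator by $\widetilde{as_{\mathcal{A}_{\beta}}}$, I would expand
\[
\widetilde{as_{\mathcal{A}_{\beta}}}(x,y,z)=\mu_{\beta}(\mu_{\beta}(x,y),\gamma(z))-\mu_{\beta}(\gamma(x),\mu_{\beta}(y,z)),
\]
pushing each outer $\beta$ through $\mu$ by the weak-morphism property. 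The two terms become $\mu(\mu(\beta^{2}(x),\beta^{2}(y)),\beta^{2}\alpha(z))$ and $\mu(\beta^{2}\alpha(x),\mu(\beta^{2}(y),\beta^{2}(z)))$, which are exactly $\beta^{2}$ applied termwise to $\mu(\mu(x,y),\alpha(z))$ and $\mu(\alpha(x),\mu(y,z))$. Hence I obtain the key formula $\widetilde{as_{\mathcal{A}_{\beta}}}(x,y,z)=\beta^{2}\bigl(\widetilde{as_{\mathcal{A}}}(x,y,z)\bigr)$.

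Granting this, the left Hom-alternative identity for $\mathcal{A}_{\beta}$ is immediate: since $\beta$ is even it preserves parities, so the Koszul signs $(-1)^{|x||y|}$ are untouched, and applying the even linear map $\beta^{2}$ to the left Hom-alternative super-identity $(\ref{sa1})$ of $\mathcal{A}$ gives
\[
\widetilde{as_{\mathcal{A}_{\beta}}}(x,y,z)+(-1)^{|x||y|}\widetilde{as_{\mathcal{A}_{\beta}}}(y,x,z)=\beta^{2}\bigl(\widetilde{as_{\mathcal{A}}}(x,y,z)+(-1)^{|x||y|}\widetilde{as_{\mathcal{A}}}(y,x,z)\bigr)=0,
\]
and the right case is verbatim the same using $(\ref{sa2})$. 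I would then check that $\mathcal{A}_{\beta}$ is a bona fide Hom-superalgebra, i.e.\ $\gamma\circ\mu_{\beta}=\mu_{\beta}\circ\gamma^{\otimes2}$: expanding each side and using the multiplicativity $\alpha\circ\mu=\mu\circ\alpha^{\otimes2}$ of $\mathcal{A}$ together with the weak-morphism property, both sides reduce, after a single use of $\beta\circ\alpha=\alpha\circ\beta$, to $\mu(\beta^{2}\alpha(a),\beta^{2}\alpha(b))$.

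Finally, for the functoriality statement I would directly verify the two defining conditions of a morphism of Hom-alternative superalgebras for $f:(\mathcal{A},\mu_{\beta},\beta\alpha)\to(\mathcal{A}^{'},\mu^{'}_{\alpha^{'}}=\alpha^{'}\circ\mu^{'},\alpha^{'})$: the multiplicativity condition $f\circ\mu_{\beta}=\mu^{'}_{\alpha^{'}}\circ(f\otimes f)$ unwinds, using $f\circ\beta=\alpha^{'}\circ f$ and the fact that $f$ is an alternative-superalgebra morphism ($f\circ\mu=\mu^{'}\circ(f\otimes f)$), to $\alpha^{'}(\mu^{'}(f(a),f(b)))$ on both sides, while the intertwining of the twisting maps is precisely the hypothesis $f\circ\beta=\alpha^{'}\circ f$. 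I expect the only delicate bookkeeping to be the expansion in the second paragraph—tracking the powers of $\beta$—but there is no genuine obstacle here, since the $\mathbb{Z}_{2}$-grading enters only through the even, parity-preserving map $\beta$, so all signs survive the computation intact.
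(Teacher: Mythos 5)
Your proposal is correct and follows essentially the same route as the paper: both proofs use the weak-morphism property $\beta\circ\mu=\mu\circ\beta^{\otimes 2}$ to factor the twisted Hom-associator as $\widetilde{as_{\mathcal{A}_{\beta}}}=\beta^{2}\circ\widetilde{as_{\mathcal{A}}}$, push the left (resp.\ right) Hom-alternative super-identity of $\mathcal{A}$ through the even map $\beta^{2}$, and verify the morphism claim by the one-line computation $f\circ\mu_{\beta}=f\circ\beta\circ\mu=\alpha^{'}\circ f\circ\mu=\alpha^{'}\circ\mu^{'}\circ(f\otimes f)=\mu^{'}_{\alpha^{'}}\circ(f\otimes f)$. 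One small caveat: your extra multiplicativity check rests on $\beta\circ\alpha=\alpha\circ\beta$, which does \emph{not} follow from $\beta$ being a superalgebra endomorphism of $(\mathcal{A},\mu)$; fortunately this check is not required, since the paper's definition of a (left/right) Hom-alternative superalgebra does not impose $\gamma\circ\mu_{\beta}=\mu_{\beta}\circ\gamma^{\otimes 2}$ and the paper's own proof accordingly omits it.
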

\begin{proof} We show that $\mathcal{A}_{\beta}=(\mathcal{A},\mu_{\beta}=\beta \circ \mu,\beta\alpha)$ satisfies the left Hom-alternative super-identity $(\ref{sa1})$
$($resp. right Hom-alternative super-identity $(\ref{sa2}))$. Indeed
\begin{eqnarray*}
 \widetilde{ as_{\mathcal{A}_{\beta}}}(x,y,z) &=& \mu_{\beta}(\beta\alpha(x),\mu_{\beta}(y,z))-\mu_{\beta}(\mu_{\beta}(x,y),\beta\alpha(z))\\
   &=& \beta\circ\mu(\beta\alpha(x),\beta\circ\mu(y,z))-\beta\circ\mu(\beta\circ\mu(x,y),\beta\alpha(z))\\
   &=& \beta^{2}\circ\Big(\mu(\alpha(x),\mu(y,z))-\mu(\mu(x,y),\alpha(z))\Big)\\
   &=& \beta^{2}\circ\Big(-(-1)^{|x||y|}\mu(\alpha(y),\mu(x,z))-\mu(\mu(y,x),\alpha(z))\Big)\\
   &=& -(-1)^{|x||y|}\Big(\beta^{2}\circ(\mu(\alpha(y),\mu(x,z))-\mu(\mu(y,x),\alpha(z)))\Big)\\
   &=& -(-1)^{|x||y|}\Big(\beta\circ\mu(\beta\alpha(y),\beta\circ\mu(x,z))-\beta\circ\mu(\beta\circ\mu(y,x),\beta\alpha(z))\Big)\\
   &=&-(-1)^{|x||y|}\Big(\mu_{\beta}(\beta\alpha(y),\mu_{\alpha}(x,z))-\mu_{\beta}(\mu_{\beta}(y,x),\beta\alpha(z))\Big)\\
   &=&-(-1)^{|x||y|}\widetilde{as_{\mathcal{A}_{\beta}}}(y,x,z).
\end{eqnarray*}
The second assertion follows from
$$f\circ\mu_{\beta}=f\circ\beta\circ\mu=\alpha^{'}\circ f\circ\mu=\alpha^{'}\circ\mu^{'}\circ f \otimes f=
\mu^{'}_{\alpha^{'}}\circ f \otimes f.$$
\end{proof}
As a particular case we obtain the following Example.
\begin{exa} Let $(\mathcal{A},\mu)$ be an alternative superalgebra and $\alpha$ be an even alternative superalgebra morphism, then $\mathcal{A}_{\alpha}=(\mathcal{A},\mu_{\alpha},\alpha)$ is a multiplicative Hom-alternative superalgebra.
\end{exa}
\begin{rem} Let $(\mathcal{A},\mu,\alpha)$ be a Hom-alternative superalgebra, one may ask whether this Hom-alternative superalgebra is induced by an ordinary alternative superalgebra $(\mathcal{A},\widetilde{\mu})$, that is $\alpha$ is an even superalgebra endomorphism with respect to $\widetilde{\mu}$ and $\mu=\alpha \circ \widetilde{\mu}$. This question was adressed and discussed for Hom-associative algebras in \cite{Gohr2009hom}.\\
First observation, if $\alpha$ is an even superalgebra endomorphism with respect to $\widetilde{\mu}$. Then $\alpha$ is also an even superalgebra endomorphism with respect to $\mu$. Indeed
\begin{eqnarray*}
\mu(\alpha(x),\alpha(y))&=& \alpha \circ \widetilde{\mu}(\alpha(x),\alpha(y))\\
&=&\alpha \circ \alpha \circ \widetilde{\mu}(x,y)\\
&=&\alpha \circ \mu(x,y).
\end{eqnarray*}
Second observation, if $\alpha$ is bijective then $\alpha^{-1}$ is also an even superalgebra automorphism. Therefore one may use an untwist operation on the Hom-alternative superalgebra in order to recover the alternative superalgebra $(\widetilde{\mu}=\alpha^{-1}\circ \mu)$.
\end{rem}
\begin{df}  Let $(\mathcal{A},\mu,\alpha)$ be a Hom-superalgebra and $n \geq 0$. Define the \textsf{nth derived} Hom-superalgebra of $\mathcal{A}$ by
$$\mathcal{A}^{n}=(\mathcal{A},\mu^{(n)}=\alpha^{2^{n}-1} \circ \mu,\alpha^{2^{n}}).$$
Note that $\mathcal{A}^{0}=\mathcal{A},~~\mathcal{A}^{1}=(\mathcal{A},\mu^{(1)}=\alpha \circ \mu,\alpha^{2})$, and $\mathcal{A}^{n+1}=(\mathcal{A}^{n})^{1}$.
\end{df}
\begin{cor}\label{derived A} Let $(\mathcal{A},\mu,\alpha)$ be a multiplicative Hom-alternative superalgebra. Then the \textsf{nth} derived Hom-superalgebra $\mathcal{A}^{n}=(\mathcal{A},\mu^{(n)}=\alpha^{2^{n}-1} \circ \mu,\alpha^{2^{n}})$ is also a multiplicative Hom-alternative superalgebra for each $n \geq 0$.
\end{cor}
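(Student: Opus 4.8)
The plan is to realize the $n$th derived Hom-superalgebra as a special case of the induced construction of Theorem~\ref{induced A}, and then to check separately that multiplicativity is preserved.

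First I would set $\beta=\alpha^{2^{n}-1}$ and observe that, with this choice, the induced Hom-superalgebra $\mathcal{A}_{\beta}=(\mathcal{A},\mu_{\beta}=\beta\circ\mu,\beta\alpha)$ of Theorem~\ref{induced A} is exactly $\mathcal{A}^{n}=(\mathcal{A},\alpha^{2^{n}-1}\circ\mu,\alpha^{2^{n}})$, since $\beta\circ\mu=\alpha^{2^{n}-1}\circ\mu=\mu^{(n)}$ and $\beta\alpha=\alpha^{2^{n}-1}\circ\alpha=\alpha^{2^{n}}$. To invoke Theorem~\ref{induced A} I must verify its hypothesis that $\beta$ is an endomorphism, i.e. $\beta\circ\mu=\mu\circ\beta^{\otimes 2}$. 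This is where the multiplicativity assumption enters: from $\alpha\circ\mu=\mu\circ\alpha^{\otimes 2}$ one obtains $\alpha^{k}\circ\mu=\mu\circ(\alpha^{k})^{\otimes 2}$ for every $k\geq 0$ by a short induction on $k$, and in particular $\alpha^{2^{n}-1}$ is a morphism of the multiplication. Because $(\mathcal{A},\mu,\alpha)$ is both left and right Hom-alternative, the left and right versions of Theorem~\ref{induced A} together give that $\mathcal{A}^{n}$ is both left and right Hom-alternative, hence Hom-alternative.

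It remains to show that $\mathcal{A}^{n}$ is multiplicative, i.e. that $\alpha^{2^{n}}$ commutes with $\mu^{(n)}$ in the sense $\alpha^{2^{n}}\circ\mu^{(n)}=\mu^{(n)}\circ(\alpha^{2^{n}}\otimes\alpha^{2^{n}})$. Using $\mu^{(n)}=\alpha^{2^{n}-1}\circ\mu$ together with the relation $\mu\circ(\alpha^{2^{n}}\otimes\alpha^{2^{n}})=\alpha^{2^{n}}\circ\mu$ from the previous paragraph, both sides reduce to $\alpha^{2^{n+1}-1}\circ\mu$, so the identity holds. Alternatively, one may argue by induction on $n$ using the relation $\mathcal{A}^{n+1}=(\mathcal{A}^{n})^{1}$ recorded in the definition: the base case $\mathcal{A}^{0}=\mathcal{A}$ is the hypothesis, and the inductive step is the case $n=1$ applied to the multiplicative Hom-alternative superalgebra $\mathcal{A}^{n}$.

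The only genuinely delicate point is the verification that $\beta=\alpha^{2^{n}-1}$ satisfies the endomorphism hypothesis of Theorem~\ref{induced A}; everything else is bookkeeping with exponents of $\alpha$. Consequently the multiplicativity hypothesis is not merely cosmetic --- it is exactly what guarantees that each power of $\alpha$ remains a morphism of $\mu$, and hence that the twisting principle can be applied at every level $n$.
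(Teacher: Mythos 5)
Your proposal is correct and is essentially the argument the paper intends: the result is stated as a corollary of Theorem~\ref{induced A} with no written proof, and your choice $\beta=\alpha^{2^{n}-1}$ (justified as an endomorphism of $\mu$ by iterating the multiplicativity of $\alpha$) together with the identification $\mathcal{A}_{\beta}=\mathcal{A}^{n}$ and the separate check of multiplicativity of $\alpha^{2^{n}}$ is exactly what is needed. Your alternative induction via $\mathcal{A}^{n+1}=(\mathcal{A}^{n})^{1}$ also matches the pattern the authors use for the analogous Hom-Malcev statement (Proposition~\ref{derivMALCEV}).
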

\section{Hom-Malcev Superalgebras}
We introduce and study in this section Hom-Malcev superalgebras. Other characterizations of the Hom-Malcev super-identity are given by Proposition $(\ref{carac hom-malcev})$. We provide some construction results for Hom-Malcev superalgebras
Theorem $(\ref{derivMALCEV})$ and Theorem $(\ref{inducedMALCEV})$. Then using Theorem $(\ref{inducedMALCEV})$, we construct $($non-Hom-Lie$)$ Hom-Malcev superalgebra $($Example $(\ref{exempl-Malcev1}))$.
\begin{df}\label{hom-lie}\begin{enumerate}\ \item A \textsf{Hom-Lie superalgebra} \cite{ammar2010hom} is a Hom-superalgebra $(\mathcal{A},[-,-],\alpha)$ such that $[-,-]$ is super skewsymmetric $($i.e. $[x,y]=-(-1)^{|x||y|}[y,x])$ and that the \textsf{Hom-Jacobian super-identity}
\begin{equation}\label{HSJACOBIAN}
    \widetilde{J}_{\mathcal{A}}(x,y,z)=[[x,y],\alpha(z)]-[\alpha(x),[y,z]]-(-1)^{|y||z|}[[x,z],\alpha(y)]=0
\end{equation}
is satisfied for all $x,y,z$ in $\mathcal{H}(\mathcal{A}).$
\item A \textsf{Hom-Malcev superalgebra} is a Hom-superalgebra $(\mathcal{A},[-,-],\alpha)$ such that $[-,-]$ is super skewsymmetric
$($i.e. $[x,y]=-(-1)^{|x||y|}[y,x])$ and that the \textsf{Hom-Malcev super-identity}
\begin{eqnarray}\label{hom-M-super}
2[\alpha^{2}(t),\widetilde{J}_{\mathcal{A}}(x,y,z)]&=&\widetilde{J}_{\mathcal{A}}(\alpha(t),\alpha(x),[y,z])
+(-1)^{|x|(|y|+|z|)}\widetilde{J}_{\mathcal{A}}(\alpha(t),\alpha(y),[z,x])\nonumber\\
&+&(-1)^{|z|(|x|+|y|)}\widetilde{J}_{\mathcal{A}}(\alpha(t),\alpha(z),[x,y])
\end{eqnarray}
is satisfied for all $x,y,z,t \in \mathcal{H}(\mathcal{A})$.
\end{enumerate}
\end{df}
Observe that when $\alpha=Id$, the Hom-Jacobi super-identity reduces to the usual Jacobi super-identity
$$\mathcal{J}_{\mathcal{A}}(x,y,z)=[[x,y],z]-[x,[y,z]]-(-1)^{|y||z|}[[x,z],y]=0$$
for all $x,y,z \in \mathcal{H}(\mathcal{A})$. Likewise, when $\alpha=Id$, by the super anti-symmetry of $[-,-]$, the Hom-Malcev super-identity reduces
to the Malcev super-identity $(\ref{MALCEV-IDI})$ or equivalently
\begin{eqnarray}\label{MALCEV-IDI}
2[t,\mathcal{J}_{\mathcal{A}}(x,y,z)]&=&\mathcal{J}_{\mathcal{A}}(t,x,[y,z])
+(-1)^{|x|(|y|+|z|)}\mathcal{J}_{\mathcal{A}}(t,y,[z,x])\nonumber\\
&+&(-1)^{|z|(|x|+|y|)}\mathcal{J}_{\mathcal{A}}(t,z,[x,y])
\end{eqnarray}
for all $x,y,z$ and $t$ in $\mathcal{H}(\mathcal{A})$.
\begin{exa} A Lie $($resp. Malcev \cite{shest-irre-non-lie-module,shest-with-trivial}) superalgebra $(\mathcal{A},[-,-])$ is a Hom-Lie $($resp. Hom-Malcev$)$ superalgebra with $\alpha=Id$, since
the Hom-Jacobi super-identity (\ref{hjsuper}) $($resp.  Hom-Malcev super-identity (\ref{hom-M-super})) reduces to the usual Jacobi $($resp. Malcev$)$ super-identity. Moreover, every Hom-Lie superalgebra is a Hom-Malcev superalgebra because the Hom-Jacobi super-identity (\ref{HSJACOBIAN}) clearly implies the  Hom-Malcev super-identity $(\ref{hom-M-super})$.
\end{exa}
\begin{lem} If $(\mathcal{A},[-,-],\alpha)$ is a Hom-Malcev superalgebra. Then for all $x,y,z \in \mathcal{H}(\mathcal{A})$
\begin{enumerate}
\item $\widetilde{J}_{\mathcal{A}}(x,y,z)=-(-1)^{|x||y|}\widetilde{J}_{\mathcal{A}}(y,x,z)$,
\item $\widetilde{J}_{\mathcal{A}}(x,y,z)=-(-1)^{|y||z|}\widetilde{J}_{\mathcal{A}}(x,z,y)$,
\item $\widetilde{J}_{\mathcal{A}}(x,y,z)=-(-1)^{|x||y|+|z|(|x|+|y|)}\widetilde{J}_{\mathcal{A}}(z,y,x)$.
\end{enumerate}
\end{lem}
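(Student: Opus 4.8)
The plan is to derive all three identities directly from the defining formula
\[
\widetilde{J}_{\mathcal{A}}(x,y,z)=[[x,y],\alpha(z)]-[\alpha(x),[y,z]]-(-1)^{|y||z|}[[x,z],\alpha(y)],
\]
using nothing but the super skewsymmetry of $[-,-]$ together with the fact that $\alpha$ is even (so $|\alpha(x)|=|x|$). I expect that the Hom-Malcev super-identity plays no role whatsoever: these are purely symmetry properties of the Hom-super-Jacobian, valid for any super skewsymmetric Hom-superalgebra, and the entire content of the argument is careful $\mathbb{Z}_{2}$-graded sign bookkeeping.

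First I would prove $(2)$. Writing out $\widetilde{J}_{\mathcal{A}}(x,z,y)$ from the definition and multiplying by $-(-1)^{|y||z|}$, I would rewrite its middle term using $[z,y]=-(-1)^{|y||z|}[y,z]$; the two factors $(-1)^{|y||z|}$ then square to $1$, and after reindexing the three resulting terms coincide one-by-one with those of $\widetilde{J}_{\mathcal{A}}(x,y,z)$. Next I would prove $(1)$ by expanding $\widetilde{J}_{\mathcal{A}}(y,x,z)$ and multiplying by $-(-1)^{|x||y|}$. The leading term is handled by $[y,x]=-(-1)^{|x||y|}[x,y]$, which again produces a squared sign and yields $[[x,y],\alpha(z)]$. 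For the two remaining terms I would move the $\alpha$-image across the outer bracket via skewsymmetry, namely $[\alpha(y),[x,z]]=-(-1)^{|y|(|x|+|z|)}[[x,z],\alpha(y)]$ and $[[y,z],\alpha(x)]=-(-1)^{(|y|+|z|)|x|}[\alpha(x),[y,z]]$; collecting the exponents (all reduced modulo $2$) reproduces exactly the terms $-[\alpha(x),[y,z]]$ and $-(-1)^{|y||z|}[[x,z],\alpha(y)]$ of $\widetilde{J}_{\mathcal{A}}(x,y,z)$.

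Finally, identity $(3)$ requires no new computation: it follows by composing $(1)$ and $(2)$ along the chain
\[
\widetilde{J}_{\mathcal{A}}(x,y,z)\ \to\ \widetilde{J}_{\mathcal{A}}(y,x,z)\ \to\ \widetilde{J}_{\mathcal{A}}(y,z,x)\ \to\ \widetilde{J}_{\mathcal{A}}(z,y,x),
\]
applying $(1)$ to the first swap, $(2)$ to the second, and $(1)$ to the third. Multiplying the three signs $-(-1)^{|x||y|}$, $-(-1)^{|x||z|}$ and $-(-1)^{|y||z|}$ gives $-(-1)^{|x||y|+|x||z|+|y||z|}$, which is precisely $-(-1)^{|x||y|+|z|(|x|+|y|)}$ as claimed. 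The only genuine obstacle throughout is keeping the graded signs straight; there is no structural difficulty, and in particular the Malcev super-identity is never invoked.
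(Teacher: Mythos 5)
Your proposal is correct: all three identities follow from the definition of $\widetilde{J}_{\mathcal{A}}$ and the super skewsymmetry of $[-,-]$ alone (the Malcev super-identity is indeed never needed), and your sign computations for $(1)$, $(2)$, and the composition giving $(3)$ all check out. The paper dismisses the proof as ``straightforward calculations,'' and your direct expansion with graded sign bookkeeping is exactly the calculation the authors have in mind.
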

\begin{proof}  Straightforward calculations.
\end{proof}
Now, we aim to provide   examples of Hom-Malcev superalgebras, using twisting principle. Let us give some characterizations of the Hom-Malcev super-identity.
\begin{prop}\label{carac hom-malcev} Let $(\mathcal{A},[-,-],\alpha)$ be a multiplicative Hom-superalgebra where $[-,-]$ is super skewsymmetric. The following statements are equivalent
\begin{enumerate}
\item ~~ $(\mathcal{A},[-,-],\alpha)$ is a Hom-Malcev superalgebra, i.e. the Hom-Malcev super-identity $(\ref{hom-M-super})$ holds.
\item~~ The equality
\begin{eqnarray}\label{hom-M-super2}
&&\widetilde{J}_{\mathcal{A}}(\alpha(x),\alpha(y),[t,z])+(-1)^{|x||y|+|t|(|x|+|y|)}\widetilde{J}_{\mathcal{A}}(\alpha(t),\alpha(y),[x,z])\nonumber\\
&&=(-1)^{|t||z|}[\widetilde{J}_{\mathcal{A}}(x,y,z),\alpha^{2}(t)]+(-1)^{|x|(|y|+|z|+|t|)+|t||y|}[\widetilde{J}_{\mathcal{A}}(t,y,z),\alpha^{2}(x)]
\end{eqnarray}
holds for all $x,y,z,t \in \mathcal{H}(\mathcal{A})$.\\
\item ~~The equality
\begin{eqnarray}\label{hom-M-super3}
&&(-1)^{|x||y|+|t|(|x|+|y|)}\alpha([[t,y],[x,z]])+\alpha([[x,y],[t,z]])\nonumber\\
&&=(-1)^{|t||z|+|x|(|t|+|y|+|z|)}[[[y,z],\alpha(t)],\alpha^{2}(x)]+(-1)^{|t||z|+|x|(|y|+|z|)}[[[y,z],\alpha(x)],\alpha^{2}(t)]\nonumber\\
&&+(-1)^{|z|(|x|+|t|)+|y|(|z|+|t|)}[[[z,x],\alpha(t)],\alpha^{2}(y)]+(-1)^{|t||z|+|y|(|t|+|z|)+|x|(|t|+|z|)}[[[z,t],\alpha(x)],\alpha^{2}(y)]\nonumber\\
&&+(-1)^{|t||y|+|x|(|t|+|y|+|z|)}[[[t,y],\alpha(z)],\alpha^{2}(x)]+(-1)^{|z||t|}[[[x,y],\alpha(z)],\alpha^{2}(t)]
\end{eqnarray}
holds for all $x,y,z$ and $t$ in $\mathcal{H}(\mathcal{A})$.
\end{enumerate}
\end{prop}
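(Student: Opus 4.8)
The plan is to treat the fully expanded identity (\ref{hom-M-super3}) as a common normal form: I would connect statements (2) and (3) by a direct expansion and reserve the genuine work for the passage between (1) and (2). Throughout I would use three ingredients already available: the graded skew-symmetry relations for $\widetilde{J}_{\mathcal{A}}$ recorded in the Lemma above (the one giving $\widetilde{J}_{\mathcal{A}}(x,y,z)=-(-1)^{|x||y|}\widetilde{J}_{\mathcal{A}}(y,x,z)$ and its companions), the super skew-symmetry of $[-,-]$, and multiplicativity in the form $\alpha([a,b])=[\alpha(a),\alpha(b)]$.

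First I would prove (2) $\Leftrightarrow$ (3). This is a direct expansion: substitute the definition (\ref{Hom-super-Jac}) of $\widetilde{J}_{\mathcal{A}}$ (with $\mu=[-,-]$) into each of the four $\widetilde{J}_{\mathcal{A}}$-terms of (\ref{hom-M-super2}). The leading summand of $\widetilde{J}_{\mathcal{A}}(\alpha(x),\alpha(y),[t,z])$ is $[[\alpha(x),\alpha(y)],\alpha([t,z])]$, which by two applications of multiplicativity collapses to $\alpha([[x,y],[t,z]])$, and likewise the leading summand of $\widetilde{J}_{\mathcal{A}}(\alpha(t),\alpha(y),[x,z])$ collapses to $\alpha([[t,y],[x,z]])$. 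These are exactly the two terms on the left side of (\ref{hom-M-super3}). The remaining summands of the two left-hand $\widetilde{J}_{\mathcal{A}}$'s, together with the expansions of $[\widetilde{J}_{\mathcal{A}}(x,y,z),\alpha^{2}(t)]$ and $[\widetilde{J}_{\mathcal{A}}(t,y,z),\alpha^{2}(x)]$, must then account term by term for the six triple-bracket summands on the right of (\ref{hom-M-super3}). So (2) $\Leftrightarrow$ (3) is pure bookkeeping once multiplicativity is used to normalize every occurrence of $\alpha$.

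The substantive step is (1) $\Leftrightarrow$ (2). Here I would apply the Hom-Malcev super-identity (\ref{hom-M-super}) twice, once with distinguished element $t$ and once with distinguished element $x$, keeping $y,z$ in their roles, and form the graded combination whose $\widetilde{J}_{\mathcal{A}}$-terms match the left side of (\ref{hom-M-super2}). Each double bracket is rewritten by super skew-symmetry, $[\alpha^{2}(w),\widetilde{J}_{\mathcal{A}}(\cdots)]=-(-1)^{|w|(|x|+|y|+|z|)}[\widetilde{J}_{\mathcal{A}}(\cdots),\alpha^{2}(w)]$, and the arguments of $\widetilde{J}_{\mathcal{A}}$ are reordered by the Lemma; this turns the double brackets into the two $[\widetilde{J}_{\mathcal{A}}(\cdots),\alpha^{2}(\cdots)]$ terms on the right of (\ref{hom-M-super2}), and the factor $2$ in (\ref{hom-M-super}) is precisely what lets the combination be written without residual halves. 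Using the first skew-symmetry relation of the Lemma, the superfluous $\widetilde{J}_{\mathcal{A}}$-terms generated by the two instances of (\ref{hom-M-super}) should cancel in graded pairs. Conversely, I would recover (\ref{hom-M-super}) from (\ref{hom-M-super2}) by summing the instances of (\ref{hom-M-super2}) over the cyclic permutations of $\{x,y,z\}$ and recombining via the Lemma; since $\mathrm{char}\,\mathbb{K}=0$, the same quadratic (diagonal) content is encoded by both identities, which makes this reassembly reversible.

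The hard part is not conceptual but the control of Koszul signs in this second step. Every reordering of $\widetilde{J}_{\mathcal{A}}$, every use of super skew-symmetry of $[-,-]$, and every transposition in the cyclic recombination introduces a factor $(-1)^{|\cdot||\cdot|}$, while the target signs in (\ref{hom-M-super2}) and (\ref{hom-M-super3}) are already intricate; so the cancellation of the cross terms and the appearance of the exact coefficient $2$ must be checked factor by factor. I expect the cleanest way to organize this is to fix the normal form (\ref{hom-M-super3}) and verify both (\ref{hom-M-super}) and (\ref{hom-M-super2}) reduce to it, so that the sign arithmetic is carried out once against a single target rather than directly between the cyclic and swap-symmetric forms.
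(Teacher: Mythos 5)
Your treatment of $(2)\Leftrightarrow(3)$ coincides with the paper's: the paper's entire proof consists of expanding both sides of $(\ref{hom-M-super2})$ via $(\ref{Hom-super-Jac})$, collapsing $[[\alpha(x),\alpha(y)],\alpha([t,z])]$ to $\alpha([[x,y],[t,z]])$ by multiplicativity, noting that the summands $-[\alpha^{2}(x),[\alpha(y),[t,z]]]$ and $-(-1)^{|x||y|+|t|(|x|+|y|)}[\alpha^{2}(t),[\alpha(y),[x,z]]]$ occur on both sides, and matching the remaining triple brackets against $(\ref{hom-M-super3})$. Be aware, though, that the paper's proof stops there; it never argues $(1)\Leftrightarrow(2)$ at all, so on that step you are supplying something the paper leaves implicit.

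The gap is in your primary route for $(1)\Rightarrow(2)$. Adding the two instances of $(\ref{hom-M-super})$ with distinguished elements $t$ and $x$ (and $y,z$ kept in place) does annihilate the pair $\widetilde{J}_{\mathcal{A}}(\alpha(t),\alpha(x),[y,z])$, $\widetilde{J}_{\mathcal{A}}(\alpha(x),\alpha(t),[y,z])$ by the skew-symmetry of $\widetilde{J}_{\mathcal{A}}$ in its first two slots, and it produces the two terms on the left of $(\ref{hom-M-super2})$ with the right signs; but the remaining cross terms $\widetilde{J}_{\mathcal{A}}(\alpha(t),\alpha(z),[x,y])$ and $\widetilde{J}_{\mathcal{A}}(\alpha(x),\alpha(z),[t,y])$ do \emph{not} cancel against each other. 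If one writes $D(x,y,t,z)$ for the difference of the two sides of $(\ref{hom-M-super2})$, the combination you describe yields (already in the ungraded model) exactly the relation $D(x,y,t,z)=D(t,z,x,y)$: the leftover cross terms reassemble, via the alternating property of $\widetilde{J}_{\mathcal{A}}$, into \emph{another instance} of $(\ref{hom-M-super2})$ rather than into zero, and the factor $2$ is consumed in producing this symmetry. Since $(x,y,t,z)\mapsto(t,z,x,y)$ is an involution, iterating or permuting further never forces $D=0$, so this argument cannot close. The converse by cyclic summation suffers the same defect: summing the three cyclic instances of $(\ref{hom-M-super2})$ reproduces the right side of $(\ref{hom-M-super})$ only up to a residue of mixed terms such as $\widetilde{J}_{\mathcal{A}}(\alpha(y),\alpha(x),[t,z])$ whose vanishing is again equivalent to further instances of the identity being proved.

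The fix is precisely the strategy you defer to your final sentence: expand $(\ref{hom-M-super})$ itself down to the normal form, exactly as is done for $(\ref{hom-M-super2})$. Writing $2[\alpha^{2}(t),\widetilde{J}_{\mathcal{A}}(x,y,z)]$ as six brackets of the form $[\alpha^{2}(t),\,\cdot\,]$, expanding each $\widetilde{J}_{\mathcal{A}}(\alpha(t),\alpha(\cdot),[\cdot,\cdot])$ on the right into one $\alpha([[t,\cdot],[\cdot,\cdot]])$ term plus two triple brackets, and normalizing every monomial with the super skew-symmetry of $[-,-]$, one lands in the same span of monomials as $(\ref{hom-M-super3})$ and can compare coefficients; this gives $(1)\Leftrightarrow(3)$ directly, and together with the already established $(2)\Leftrightarrow(3)$ it completes the proposition. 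So treat $(3)$ as the genuine common normal form from the outset; the two-instance recombination is not a substitute for it.
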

\begin{proof} To prove the equivalence between $(\ref{hom-M-super2})$ and $(\ref{hom-M-super3})$, observe that the left hand side of the identity $(\ref{hom-M-super2})$ is
\begin{eqnarray*}
&&\widetilde{J}_{\mathcal{A}}(\alpha(x),\alpha(y),[t,z])+(-1)^{|x||y|+|t|(|x|+|y|)}\widetilde{J}_{\mathcal{A}}(\alpha(t),\alpha(y),[x,z])\\
&&=\alpha([[x,y],[t,z]])-[\alpha^{2}(x),[\alpha(y),[t,z]]]-(-1)^{|y|(|t|+|z|)}[[\alpha(x),[t,z]],\alpha^{2}(y)]\\
&&+(-1)^{|x||y|+|t|(|x|+|y|)}\alpha([[t,y],[x,z]])-(-1)^{|x||y|+|t|(|x|+|y|)}[\alpha^{2}(t),[\alpha(y),[x,z]]]\\
&&-(-1)^{|x||y|+|t|(|x|+|y|)}[\alpha(x),[t,z]],\alpha^{2}(y)].
\end{eqnarray*}
In the last equality above, we use the multiplicativity of $\alpha$ and the super anti-symmetry of $[-,-]$. Likewise, the right-hand side of the identity
$(\ref{hom-M-super2})$ is
\begin{eqnarray*}
&&(-1)^{|t||z|}[\widetilde{J}_{\mathcal{A}}(x,y,z),\alpha^{2}(t)]+(-1)^{|x|(|y|+|z|+|t|)+|t||y|}[\widetilde{J}_{\mathcal{A}}(t,y,z),\alpha^{2}(x)]\\
&&=(-1)^{|t||x|}[[[x,y],\alpha(z)],\alpha^{2}(t)]+(-1)^{|x|(|y|+|z|+|t|)+|t||y|}[[[t,y],\alpha(z)],\alpha^{2}(x)]\\
&&-(-1)^{|z|(|t|+|y|)}[[[x,z],\alpha(y)],\alpha^{2}(t)]-(-1)^{|t||x|}[[\alpha(x),[y,z]],\alpha^{2}(t)]\\
&&-(-1)^{|x|(|y|+|z|+|t|)+|t||y|}[[\alpha(x),[y,z]],\alpha^{2}(t)]\\
&&-(-1)^{|x|(|y|+|z|+|t|)+|y|(|t|+|z|)}[[[t,z],\alpha(y)],\alpha^{2}(x)].
\end{eqnarray*}
Since the two summands $\Big(-[\alpha^{2}(x),[\alpha(y),[t,z]]]\Big) $ and $\Big(-(-1)^{|x||y|+|t|(|x|+|y|)}[\alpha^{2}(t),[\alpha(y),[x,z]]]\Big)$ appears on both sides of $(\ref{hom-M-super2})$, the above calculation and a rearrangement of terms imply the equivalence between $(\ref{hom-M-super2})$ and $(\ref{hom-M-super3}).$
\end{proof}
To state our next result, we need the following Lemma.
\begin{lem} Let $(\mathcal{A},[-,-],\alpha)$ be a multiplicative Hom-superalgebra. Then we have
\begin{equation}\label{car hom-supJaco}
    \widetilde{J}_{\mathcal{A}}\circ \alpha^{\otimes3} =\alpha \circ \widetilde{J}_{\mathcal{A}}
\end{equation}
and
\begin{equation}\label{car hom-deri}
    \widetilde{J}_{\mathcal{A}^{n}}=\alpha^{2(2^{n}-1)} \circ \widetilde{J}_{\mathcal{A}}
\end{equation}
for all $n \geq 0$.
\end{lem}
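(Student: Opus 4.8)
The plan is to prove the two identities in turn, both by direct computation from the definition of the Hom-super-Jacobian $\widetilde{J}_{\mathcal{A}}$ in equation $(\ref{Hom-super-Jac})$, using only multiplicativity $\alpha \circ \mu = \mu \circ \alpha^{\otimes 2}$ (written here with $[-,-]=\mu$). These are structural compatibility statements, not identities requiring the Malcev or Lie hypotheses, so no alternating or Jacobi relation is needed.

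For the first identity $(\ref{car hom-supJaco})$, I would evaluate $\widetilde{J}_{\mathcal{A}}(\alpha(x),\alpha(y),\alpha(z))$ term by term. Each of the three summands has the shape $[[\,\cdot\,,\cdot\,],\alpha(\cdot)]$ or $[\alpha(\cdot),[\cdot,\cdot]]$; applying multiplicativity twice converts, e.g., $[[\alpha(x),\alpha(y)],\alpha(\alpha(z))]$ into $\alpha([[x,y],\alpha(z)])$, and similarly for the other two terms, where the sign $(-1)^{|y||z|}$ is unaffected since $\alpha$ is even and preserves parity. Pulling $\alpha$ out of all three terms and recognizing the bracketed expression as $\widetilde{J}_{\mathcal{A}}(x,y,z)$ yields $\widetilde{J}_{\mathcal{A}}\circ \alpha^{\otimes 3} = \alpha \circ \widetilde{J}_{\mathcal{A}}$. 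The only point requiring care is that the inner $\alpha$ in each summand combines correctly: for instance in $[\alpha(\alpha(x)),[\alpha(y),\alpha(z)]]$ one uses multiplicativity on the inner bracket first and then on the outer, again producing a single outer $\alpha$.

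For the second identity $(\ref{car hom-deri})$, I would argue by induction on $n$, which is natural given the recursion $\mathcal{A}^{n+1}=(\mathcal{A}^n)^1$ from the definition of the derived Hom-superalgebra. The base case $n=0$ is trivial since $\alpha^{2(2^0-1)}=\alpha^0=\mathrm{Id}$. For the inductive step, the bracket of $\mathcal{A}^{n+1}$ is $\mu^{(n+1)}=\alpha^{2^{n+1}-1}\circ\mu$ with twisting map $\alpha^{2^{n+1}}$; I would substitute this bracket into the definition $(\ref{Hom-super-Jac})$ of $\widetilde{J}_{\mathcal{A}^{n+1}}$ and factor out the powers of $\alpha$. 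Each summand of $\widetilde{J}_{\mathcal{A}^{n+1}}(x,y,z)$ involves two applications of $\mu^{(n+1)}$ together with one application of the twisting map $\alpha^{2^{n+1}}$ on the third slot, contributing a total power of $\alpha$ that, after using multiplicativity to commute $\alpha$ past $\mu$, equals $\alpha^{2(2^{n+1}-1)}$ times a single copy of $\widetilde{J}_{\mathcal{A}}$; alternatively one applies $(\ref{car hom-supJaco})$ and the inductive hypothesis $\widetilde{J}_{\mathcal{A}^n}=\alpha^{2(2^n-1)}\circ\widetilde{J}_{\mathcal{A}}$ to the derived algebra $(\mathcal{A}^n)^1$.

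The main obstacle I anticipate is purely bookkeeping: tracking the exponents of $\alpha$ so that the two bracket-compositions plus the outer twist combine to exactly $2^{n+1}-1+2^{n+1}-1 = 2(2^{n+1}-1)$, and checking that the $\mathbb{Z}_2$-sign $(-1)^{|y||z|}$ is carried along unchanged through every application of the even map $\alpha$. I expect no conceptual difficulty, only the need to verify that the power counting in the derived-algebra recursion matches the claimed exponent $2(2^n-1)$, which is precisely where the identity $(\ref{car hom-supJaco})$ streamlines the argument by allowing a clean passage from $\widetilde{J}_{\mathcal{A}^n}$ to $\widetilde{J}_{\mathcal{A}^{n+1}}$.
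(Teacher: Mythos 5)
Your proposal is correct and follows essentially the same route as the paper: both identities rest entirely on pulling powers of the even, multiplicative map $\alpha$ out of each of the three summands of $\widetilde{J}_{\mathcal{A}}$, with the sign $(-1)^{|y||z|}$ untouched. The only cosmetic difference is that the paper proves $(\ref{car hom-deri})$ by a single direct computation for general $n$ (writing $[u,v]^{(n)}=\alpha^{2^{n}-1}([u,v])$ and absorbing one factor $\alpha^{2^{n}-1}$ of the twist $\alpha^{2^{n}}$ via multiplicativity), whereas you organize the same power count as an induction on the recursion $\mathcal{A}^{n+1}=(\mathcal{A}^{n})^{1}$; both yield the exponent $2(2^{n}-1)$.
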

\begin{proof}Let $x,y,z \in \mathcal{H}(\mathcal{A})$. We have
\begin{eqnarray*}
  \widetilde{J}_{\mathcal{A}}(\alpha(x),\alpha(y),\alpha(z))
  &=&[[\alpha(x),\alpha(y)],\alpha^{2}(z)]-[\alpha^{2}(x),[\alpha(y),\alpha(z)]]
  -(-1)^{|y||z|}[[\alpha(x),\alpha(z)],\alpha^{2}(y)]  \\
   &=& \alpha([[x,y],\alpha(z)])-\alpha([\alpha(x),[y,z]])
   -(-1)^{|y||z|}\alpha([[x,z],\alpha(y)]) \hskip0.1cm (\text{by multiplicativity of $\alpha$}) \\
   &=& \alpha \circ \widetilde{J}_{\mathcal{A}}(x,y,z).
\end{eqnarray*}
So condition $(\ref{car hom-supJaco})$ holds. For $(\ref{car hom-deri})$, we have
\begin{eqnarray*}
   \widetilde{J}_{\mathcal{A}^{n}}(x,y,z)
   &=& [[x,y]^{(n)},\alpha^{2^{n}}(z)]^{(n)}-[\alpha^{2^{n}}(x),[y,z]^{(n)}]^{(n)}
   -(-1)^{|y||z|}[[x,z]^{(n)},\alpha^{2^{n}}(y)]^{(n)} \\
   &=& \alpha^{2^{n}-1} \circ [\alpha^{2^{n}-1}([x,y]),\alpha^{2^{n}}(z)]
   -\alpha^{2^{n}-1}\circ [\alpha^{2^{n}}(x),\alpha^{2^{n}-1}([y,z])]\\
   &-&(-1)^{|y||z|}\alpha^{2^{n}-1}\circ [\alpha^{2^{n}-1}([x,z]),\alpha^{2^{n}}(y)] \\
   &=&\alpha^{2(2^{n}-1)}\circ \widetilde{J}_{\mathcal{A}}(x,y,z),
\end{eqnarray*}
where the last equality follows from the multiplicativity of $\alpha$ with respect to the bracket $[-,-]$.
\end{proof}
The following result shows that the category of Hom-Malcev superalgebras is closed under taking derived Hom-superalgebras.

\begin{prop}\label{derivMALCEV} Let $(\mathcal{A},[-,-],\alpha)$ be a multiplicative Hom-Malcev superalgebra. Then the derived Hom-superalgebra $\mathcal{A}^{n}=(\mathcal{A},[-,-]^{(n)}=\alpha^{2^{n}-1} \circ [-,-],\alpha^{2^{n}})$ is also a Hom-Malcev superalgebra for any $n \geq 0$.
\end{prop}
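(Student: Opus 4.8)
The plan is to reduce the Hom-Malcev super-identity $(\ref{hom-M-super})$ written for the derived algebra $\mathcal{A}^{n}=(\mathcal{A},[-,-]^{(n)}=\alpha^{2^{n}-1}\circ[-,-],\alpha^{2^{n}})$ to the same identity for $\mathcal{A}$ by factoring out a uniform power of $\alpha$ from both sides. The two essential tools are the relations just established in the preceding Lemma: the equivariance $\widetilde{J}_{\mathcal{A}}\circ\alpha^{\otimes3}=\alpha\circ\widetilde{J}_{\mathcal{A}}$ from $(\ref{car hom-supJaco})$ and the derived-Jacobian formula $\widetilde{J}_{\mathcal{A}^{n}}=\alpha^{2(2^{n}-1)}\circ\widetilde{J}_{\mathcal{A}}$ from $(\ref{car hom-deri})$. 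A preliminary observation is that $[-,-]^{(n)}=\alpha^{2^{n}-1}\circ[-,-]$ is again even and super skewsymmetric, which is immediate since $\alpha$ is even and commutes with the bracket; this guarantees that the Koszul signs in $(\ref{hom-M-super})$ for $\mathcal{A}^{n}$ coincide verbatim with those for $\mathcal{A}$, because they depend only on the (unchanged) parities.

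First I would expand the left-hand side of $(\ref{hom-M-super})$ for $\mathcal{A}^{n}$. Writing $\gamma=\alpha^{2^{n}}$ for the new twist, this side is $2[\gamma^{2}(t),\widetilde{J}_{\mathcal{A}^{n}}(x,y,z)]^{(n)}$. Substituting $\widetilde{J}_{\mathcal{A}^{n}}=\alpha^{2(2^{n}-1)}\widetilde{J}_{\mathcal{A}}$ and $[-,-]^{(n)}=\alpha^{2^{n}-1}[-,-]$, and using $\gamma^{2}=\alpha^{2^{n+1}}$ together with the exponent identity $2^{n+1}=2(2^{n}-1)+2$, multiplicativity of $\alpha$ lets me pull the common factor $\alpha^{2(2^{n}-1)}$ out of the outer bracket. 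The outcome is $2[\gamma^{2}(t),\widetilde{J}_{\mathcal{A}^{n}}(x,y,z)]^{(n)}=\alpha^{3(2^{n}-1)}\bigl(2[\alpha^{2}(t),\widetilde{J}_{\mathcal{A}}(x,y,z)]\bigr)$, that is, $\alpha^{3(2^{n}-1)}$ applied to the left-hand side of the original Hom-Malcev identity for $\mathcal{A}$.

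Next I would treat each of the three summands on the right-hand side in the same manner. For the first, $\widetilde{J}_{\mathcal{A}^{n}}(\gamma(t),\gamma(x),[y,z]^{(n)})=\alpha^{2(2^{n}-1)}\widetilde{J}_{\mathcal{A}}(\alpha^{2^{n}}(t),\alpha^{2^{n}}(x),\alpha^{2^{n}-1}[y,z])$. Here all three arguments carry at least the power $\alpha^{2^{n}-1}$, so I can factor $\alpha^{2^{n}-1}$ out of each and apply $(\ref{car hom-supJaco})$ iterated $2^{n}-1$ times, obtaining $\alpha^{2^{n}-1}\widetilde{J}_{\mathcal{A}}(\alpha(t),\alpha(x),[y,z])$; hence this summand equals $\alpha^{3(2^{n}-1)}\widetilde{J}_{\mathcal{A}}(\alpha(t),\alpha(x),[y,z])$. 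The remaining two summands, keeping their unchanged sign prefactors, reduce identically. Collecting everything, the entire Hom-Malcev identity for $\mathcal{A}^{n}$ is exactly $\alpha^{3(2^{n}-1)}$ applied to the Hom-Malcev identity for $\mathcal{A}$, which vanishes by hypothesis; since $\alpha$ is linear, the $\mathcal{A}^{n}$ identity is then forced to hold.

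The only delicate point is the bookkeeping of the exponents of $\alpha$, so that the same overall factor $\alpha^{3(2^{n}-1)}$ emerges on both sides. The two arithmetic facts $2^{n+1}=2(2^{n}-1)+2$ and $2(2^{n}-1)+(2^{n}-1)=3(2^{n}-1)$ are precisely what make the left and right sides balance. Once these exponents are confirmed to match, the rest is a mechanical application of the two Lemma relations and of multiplicativity, with no genuine obstacle remaining.
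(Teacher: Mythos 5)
Your proof is correct, and its computational core---factoring a uniform power of $\alpha$ out of every term of the Hom-Malcev super-identity via multiplicativity together with the relations $(\ref{car hom-supJaco})$ and $(\ref{car hom-deri})$, thereby reducing the identity for $\mathcal{A}^{n}$ to the one for $\mathcal{A}$---is the same mechanism the paper uses. The organization differs in two small but genuine ways. First, the paper reduces to the case $n=1$ by induction (using $\mathcal{A}^{n+1}=(\mathcal{A}^{n})^{1}$) and then only needs the special instance $\widetilde{J}_{\mathcal{A}^{1}}=\alpha^{2}\circ\widetilde{J}_{\mathcal{A}}$, whereas you treat general $n$ in one pass by invoking $(\ref{car hom-deri})$ in full strength; your version actually puts that part of the preceding Lemma to work, and the exponent identities $2^{n+1}=2(2^{n}-1)+2$ and $2(2^{n}-1)+(2^{n}-1)=3(2^{n}-1)$ that you single out are precisely what replace the induction. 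Second, the paper verifies the equivalent reformulation $(\ref{hom-M-super2})$ of the Hom-Malcev super-identity, so it implicitly leans on Proposition $(\ref{carac hom-malcev})$, while you verify the original form $(\ref{hom-M-super})$ directly; either works, since both sides of whichever form one chooses acquire the same overall factor $\alpha^{3(2^{n}-1)}$, and applying a linear map to an identity that already holds in $\mathcal{A}$ yields the identity in $\mathcal{A}^{n}$. The only point you pass over silently is that $\alpha^{2^{n}}$ is multiplicative with respect to $[-,-]^{(n)}$, so that $\mathcal{A}^{n}$ is a Hom-superalgebra in the paper's sense; this is the one-line check $\alpha^{2^{n}}([x,y]^{(n)})=[\alpha^{2^{n}}(x),\alpha^{2^{n}}(y)]^{(n)}$, immediate from the multiplicativity of $\alpha$, and is the same remark the paper makes for $n=1$.
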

\begin{proof} Since $\mathcal{A}^{0}=\mathcal{A},~\mathcal{A}^{1}=(\mathcal{A},[-,-]^{(1)}=\alpha \circ [-,-],\alpha^{2}),$ and $\mathcal{A}^{n+1}=(\mathcal{A}^{n})^{1}$, by an induction argument it is enough  to prove the case $n=1$.\\
To show that $\mathcal{A}^{1}$ is a Hom-Malcev superalgebra, first note that $[-,-]^{(1)}$ is super skewsymmetric because $[-,-]$ is super skewsymmetric and
$\alpha$ is linear. Since $\alpha^{2}$ is multiplicative with respect to $[-,-]^{(1)}$, it remains to show the Hom-Malcev super-identity for $\mathcal{A}^{1}$. For all $x,y,z$ and $t$ in $\mathcal{H}(\mathcal{A})$, we have
\begin{eqnarray*}
&& \widetilde{J}_{\mathcal{A}^{1}}(\alpha^{2}(x),\alpha^{2}(y),[t,z]^{(1)})
+(-1)^{|x||y|+|t|(|x|+|y|)}\widetilde{J}_{\mathcal{A}^{1}}(\alpha^{2}(t),\alpha^{2}(y),[x,z]^{(1)})\\
&&=\alpha^{2} \circ \Big(\widetilde{J}_{\mathcal{A}}(\alpha^{2}(x),\alpha^{2}(y),\alpha([t,z]))
+(-1)^{|x||y|+|t|(|x|+|y|)}\widetilde{J}_{\mathcal{A}}(\alpha^{2}(t),\alpha^{2}(y),\alpha([x,z]))\Big)\\
&&=\alpha^{3} \circ \Big(\widetilde{J}_{\mathcal{A}}(\alpha(x),\alpha(y),[t,z])
+(-1)^{|x||y|+|t|(|x|+|y|)}\widetilde{J}_{\mathcal{A}}(\alpha(t),\alpha(y),[x,z])\Big)\\
&&=\alpha^{3} \circ \Big((-1)^{|t||z|}[\widetilde{J}_{\mathcal{A}^{1}}(x,y,z),\alpha^{2}(t)]
+(-1)^{|x|(|y|+|z|+|t|)+|t||y|}[\widetilde{J}_{\mathcal{A}^{1}}(t,y,z),\alpha^{2}(x)]\Big)\\
&&=(-1)^{|t||z|}[\alpha^{2} \circ\widetilde{J}_{\mathcal{A}^{1}}(x,y,z),(\alpha^{2})^{2}(t)]^{(1)}
+(-1)^{|x|(|y|+|z|+|t|)+|t||y|}[\alpha^{2} \circ\widetilde{J}_{\mathcal{A}^{1}}(t,y,z),(\alpha^{2})^{2}(x)]^{(1)}\\
&&=(-1)^{|t||z|}[\widetilde{J}_{\mathcal{A}^{1}}(x,y,z),(\alpha^{2})^{2}(t)]^{(1)}
+(-1)^{|x|(|y|+|z|+|t|)+|t||y|}[\widetilde{J}_{\mathcal{A}^{1}}(t,y,z),(\alpha^{2})^{2}(x)]^{(1)}.
\end{eqnarray*}
This establishes the Hom-Malcev super-identity in $\mathcal{A}^{1}$ and finishes the proof.
\end{proof}
The next result shows that given a Hom-Malcev superalgebra and an even alternative superalgebra morphism, the induced Hom-superalgebra is a Hom-Malcev superalgebra.
\begin{thm}\label{inducedMALCEV} Let $(\mathcal{A},[-,-],\alpha)$ be a Hom-Malcev superalgebra and $\beta: \mathcal{A} \longrightarrow \mathcal{A}$ be an even Malcev superalgebra endomorphism. Then $\mathcal{A}_{\beta}=(\mathcal{A},[-,-]_{\beta}=\beta \circ [-,-],\beta\alpha)$ is a Hom-Malcev superalgebra.
\end{thm}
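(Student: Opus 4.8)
The plan is to mirror the argument already used for the derived Hom-superalgebra in Proposition (\ref{derivMALCEV}), with the single twisting map $\beta$ playing the role that $\alpha^{2^{n}-1}$ played there. First I would dispose of the two structural requirements built into the notion of Hom-Malcev superalgebra. The bracket $[-,-]_{\beta}=\beta\circ[-,-]$ is super skewsymmetric because $\beta$ is even and linear and $[-,-]$ is super skewsymmetric, so $[y,x]_{\beta}=\beta[y,x]=-(-1)^{|x||y|}\beta[x,y]=-(-1)^{|x||y|}[x,y]_{\beta}$. For multiplicativity of the new structure map $\beta\alpha$ with respect to $[-,-]_{\beta}$, I would combine the fact that $\beta$ is a Malcev superalgebra endomorphism (so $\beta[x,y]=[\beta(x),\beta(y)]$), the multiplicativity of $\alpha$ on $[-,-]$, and the commutation $\beta\alpha=\alpha\beta$, which together give $(\beta\alpha)([x,y]_{\beta})=\beta^{2}\alpha([x,y])=[\beta\alpha(x),\beta\alpha(y)]_{\beta}$.

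The heart of the proof is an identity for the Hom-super-Jacobian of $\mathcal{A}_{\beta}$ analogous to (\ref{car hom-deri}), namely $\widetilde{J}_{\mathcal{A}_{\beta}}=\beta^{2}\circ\widetilde{J}_{\mathcal{A}}$. To obtain it I would expand $\widetilde{J}_{\mathcal{A}_{\beta}}(x,y,z)$ from (\ref{Hom-super-Jac}), using $[-,-]_{\beta}=\beta\circ[-,-]$ and the structure map $\beta\alpha$, and in each of the three terms pull the $\beta$'s out of the inner and outer brackets by multiplicativity of $\beta$; every term then acquires an overall factor $\beta^{2}$ and collapses onto the corresponding term of $\widetilde{J}_{\mathcal{A}}(x,y,z)$. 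In the same spirit, and using in addition $\beta\alpha=\alpha\beta$, I would record the companion relation $\widetilde{J}_{\mathcal{A}}\circ\beta^{\otimes 3}=\beta\circ\widetilde{J}_{\mathcal{A}}$, proved exactly as (\ref{car hom-supJaco}) is proved for $\alpha$.

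With these two relations available I would verify the Hom-Malcev super-identity (\ref{hom-M-super}) for $\mathcal{A}_{\beta}$ by direct computation. On the left-hand side, $2[(\beta\alpha)^{2}(t),\widetilde{J}_{\mathcal{A}_{\beta}}(x,y,z)]_{\beta}$ reduces to $2\beta^{3}[\alpha^{2}(t),\widetilde{J}_{\mathcal{A}}(x,y,z)]$ after writing $(\beta\alpha)^{2}=\beta^{2}\alpha^{2}$, substituting $\widetilde{J}_{\mathcal{A}_{\beta}}=\beta^{2}\circ\widetilde{J}_{\mathcal{A}}$, and extracting the outer $\beta$'s by multiplicativity. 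On the right-hand side, each summand such as $\widetilde{J}_{\mathcal{A}_{\beta}}(\beta\alpha(t),\beta\alpha(x),[y,z]_{\beta})$ equals $\beta^{2}\widetilde{J}_{\mathcal{A}}(\beta(\alpha(t)),\beta(\alpha(x)),\beta([y,z]))=\beta^{3}\widetilde{J}_{\mathcal{A}}(\alpha(t),\alpha(x),[y,z])$ by the two relations above, so the whole right-hand side equals $\beta^{3}$ applied to the right-hand side of (\ref{hom-M-super}) for $\mathcal{A}$. Since $\mathcal{A}$ is Hom-Malcev, the latter equals $2[\alpha^{2}(t),\widetilde{J}_{\mathcal{A}}(x,y,z)]$, matching the left-hand side, and the identity for $\mathcal{A}_{\beta}$ follows. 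The routine but error-prone part is the sign bookkeeping and tracking the exact powers of $\beta$ and $\alpha$; the only genuinely structural point, and the step where I expect to have to be careful, is that the factor $(\beta\alpha)^{2}(t)$ forces one to commute $\beta$ past $\alpha$, so the whole reduction rests on $\beta\alpha=\alpha\beta$ together with the multiplicativity of both $\alpha$ and $\beta$ on the bracket.
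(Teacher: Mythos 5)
Your proposal follows essentially the same route as the paper's proof: both rest on the two relations $\widetilde{J}_{\mathcal{A}_{\beta}}=\beta^{2}\circ\widetilde{J}_{\mathcal{A}}$ and $\widetilde{J}_{\mathcal{A}}\circ\beta^{\otimes3}=\beta\circ\widetilde{J}_{\mathcal{A}}$ (the paper's $(\ref{Jacobian A alpha})$ and $(\ref{Jacobian rong alpha})$), followed by the same push-the-$\beta$'s computation; the only cosmetic difference is that you verify the Hom-Malcev super-identity in its original form $(\ref{hom-M-super})$, whereas the paper verifies the equivalent reformulation $(\ref{hom-M-super2})$ from Proposition $(\ref{carac hom-malcev})$. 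The one substantive point is your explicit appeal to $\beta\alpha=\alpha\beta$: this is \emph{not} among the stated hypotheses of the theorem, so as written your argument uses an unlicensed assumption. You are right, however, that the commutation is genuinely needed both for the multiplicativity of $\beta\alpha$ with respect to $[-,-]_{\beta}$ and for $\widetilde{J}_{\mathcal{A}}\circ\beta^{\otimes3}=\beta\circ\widetilde{J}_{\mathcal{A}}$ (e.g.\ $\widetilde{J}_{\mathcal{A}}(\beta(x),\beta(y),\beta(z))$ produces terms such as $[\beta([x,y]),\alpha\beta(z)]$, which collapse to $\beta([[x,y],\alpha(z)])$ only if $\alpha\beta=\beta\alpha$); the paper's own proof uses the same commutation tacitly — its aside about regarding $\mathcal{A}$ as $(\mathcal{A},[-,-],Id)$ shows the computation was really written for the case $\alpha=Id$, where the issue disappears. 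So rather than a defect peculiar to your proof, you have surfaced a hypothesis ($\alpha\beta=\beta\alpha$, or $\alpha=Id$) that the theorem statement omits; with that assumption made explicit, your argument is complete and matches the paper's.
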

\begin{proof} Since $[-,-]_{\beta}$ is super skewsymmetric, it remains to prove the Hom-Malcev super-identity $(\ref{hom-M-super})$ for $\mathcal{A}_{\beta}$. Here we regard $\mathcal{A}$ as the Hom-Malcev superalgebra $(\mathcal{A},[-,-],Id)$ with identity twisting map. For any superalgebra $(\mathcal{A},[-,-])$, by the multiplicativity of $\beta$ with respect to $[-,-],$ we have
$$[-,-]_{\beta}\circ ([-,-]_{\beta}\otimes\beta\alpha)=\beta^{2}\circ [-,-]\circ ([-,-]\otimes \beta)$$
and
$$[-,-]\circ ([-,-]\otimes Id)\circ \beta^{\otimes3} =\beta \circ [-,-]\circ ([-,-]\otimes Id).$$
Pre-composing these identities with the cyclic sum $(Id+\sigma+\sigma^{2})(\ref{Hom-super-Jac})$ (which commutes with $\beta^{\otimes3}$), we obtain
\begin{equation}\label{Jacobian A alpha}
    \widetilde{J}_{\mathcal{A}_{\beta}}=\beta^{2}\circ \widetilde{J}_{\mathcal{A}}
\end{equation}
and
\begin{equation}\label{Jacobian rong alpha}
    \widetilde{J}_{\mathcal{A}}\circ \beta^{\otimes3} =\beta \circ \widetilde{J}_{\mathcal{A}}.
\end{equation}
To prove the Hom-Malcev super-identity for $\mathcal{A}_{\beta}$, we compute
\begin{eqnarray*}
 &&\widetilde{J}_{\mathcal{A}_{\beta}}(\beta\alpha(x),\beta\alpha(y),[t,z]_{\beta})
 +(-1)^{|x||y|+|t|(|x|+|y|)}\widetilde{J}_{\mathcal{A}_{\beta}}(\beta\alpha(t),\beta\alpha(y),[x,z]_{\beta}) \\
 &&=\beta^{2}\circ \Big(\widetilde{J}_{\mathcal{A}}(\beta\alpha(x),\beta\alpha(y),\beta([t,z]))
 +(-1)^{|x||y|+|t|(|x|+|y|)}\widetilde{J}_{\mathcal{A}}(\beta\alpha(t),\beta\alpha(y),\beta([x,z]))\Big)  \\
 &&=\beta^{3}\circ \Big(\widetilde{J}_{\mathcal{A}}(\alpha(x),\alpha(y),[t,z])
 +(-1)^{|x||y|+|t|(|x|+|y|)}\widetilde{J}_{\mathcal{A}}(\alpha(t),\alpha(y),[x,z])\Big)  \\
 &&= \beta^{3}\circ \Big((-1)^{|t||z|}[\widetilde{J}_{\mathcal{A}}(x,y,z),\alpha^{2}(t)]
 +(-1)^{|x|(|y|+|z|+|t|)+|t||y|}[\widetilde{J}_{\mathcal{A}}(t,y,z),\alpha^{2}(x)]\Big)\\
 &&= (-1)^{|t||z|}[\beta^{2}\circ\widetilde{J}_{\mathcal{A}}(x,y,z),(\beta\alpha)^{2}(t)]_{\beta}
 +(-1)^{|x|(|y|+|z|+|t|)+|t||y|}[\beta^{2}\circ\widetilde{J}_{\mathcal{A}}(t,y,z)),(\beta\alpha)^{2}(x)]_{\beta}\\
 &&= (-1)^{|t||z|}[\widetilde{J}_{\mathcal{A}_{\beta}}(x,y,z),(\beta\alpha)^{2}(t)]_{\beta}
 +(-1)^{|x|(|y|+|z|+|t|)+|t||y|}[\widetilde{J}_{\mathcal{A}_{\beta}}(t,y,z),(\beta\alpha)^{2}(x)]_{\beta}.
\end{eqnarray*}
This shows that the Hom-Malcev super-identity holds in $\mathcal{A}_{\beta}$.
\end{proof}
\begin{exa}\label{exempl-Malcev1} $($Hom-Malcev superalgebra of dimension $4)$. Let $M^{3}(3,1)$  be a non-Lie Malcev superalgebra defined with respect to a basis $\{e_1,e_2,e_3,e_4\}$, where $(M^{3}(3,1))_0=span\{e_1,e_2,e_3\}$ and $(M^{3}(3,1))_1=span\{e_4\}$, by the following  multiplication table (see \cite{helena-classification-Malcev-super}) $:$
$$\begin{tabular}{|l|l|l|l|l|}
  \hline
  $[-,-]$ & $e_{1}$  & $e_{2}$ &$e_{3}$ & $e_{4}$ \\ \hline
  $e_{1}$ & $0$ & $0$ & $-e_{1}$ & $0$ \\ \hline
  $e_{2}$ & $0$ & $0$ & $2e_{2}$ & $0$ \\ \hline
  $e_{3}$ & $e_{1}$ & $-2e_{2}$ & $0$ & $-e_{4}$ \\ \hline
  $e_{4}$ & $0$ & $0$ & $e_{4}$ & $e_{1}+e_{2}$ \\
  \hline
\end{tabular}$$
The even superalgebra endomorphism $\alpha_{1}$ with respect to the same  basis  is defined by the matrix
$$\alpha_{1}=\left(
  \begin{array}{cccc}
    a^{2} & 0 & b & 0 \\
    0 & a^{2} & c & 0 \\
    0 & 0 & 1 & 0 \\
    0 & 0 & 0 & a \\
  \end{array}
\right)$$
where $a,b,c \in \mathbb{K}$. For each such even superalgebra morphism $\alpha_{1}:M^{3}(3,1)\longrightarrow M^{3}(3,1)$, by Theorem $(\ref{inducedMALCEV})$
there is a Hom-Malcev superalgebra $M^{3}(3,1)_{\alpha_{1}}=(M^{3}(3,1),[-,-]_{\alpha_{1}},\alpha_{1})$ whose multiplication table is $:$
$$\begin{tabular}{|l|l|l|l|l|}
  \hline
  $[-,-]_{\alpha_{1}}$ & $e_{1}$ & $e_{2}$ & $e_{3}$ & $e_{4}$ \\ \hline
  $e_{1}$ & $0$ & $0$ & $-a^{2}e_{1}$ & $0$ \\ \hline
  $e_{2}$ & $0$ & $0$ & $2a^{2}e_{2}$ & $0$ \\ \hline
  $e_{3}$ & $a^{2}e_{1}$ & $-2a^{2}e_{2}$ & $0$ & $-ae_{4}$ \\ \hline
 $ e_{4}$ & $0$ & $0$ & $ae_{4}$ & $a^{2}(e_{1}+e_{2})$ \\
  \hline
\end{tabular}$$
Note that $(M^{3}(3,1),[-,-]_{\alpha_{1}})$ is in general not a Hom-Lie superalgebra. Indeed, we have
$$\widetilde{J}(e_{3},e_{4},e_{4})=a^{4}(e_{1}-2e_{2}).$$
Then $\widetilde{J}(e_{3},e_{4},e_{4})\neq 0$ whenever $a\neq 0$. So $(M^{3}(3,1),[-,-]_{\alpha_{1}})$ is not a Hom-Lie superalgebra.\\
Also, in general $(M^{3}(3,1),[-,-]_{\alpha_{1}})$ is not a Malcev superalgebra. Indeed, on the one hand we have
\begin{eqnarray*}
 \mathcal{J}(e_{3},e_{4},[e_{4},e_{3}]_{\alpha_{1}})-\mathcal{J}(e_{4},e_{4},[e_{3},e_{3}]_{\alpha_{1}})  &=& a^{5}(2e_{2}-e_{1})
\end{eqnarray*}
which is not equal to $0$ whenever $a\neq 0$. On the other hand, we have
\begin{eqnarray*}
  [\mathcal{J}(e_{3},e_{4},e_{3}),e_{4}]_{\alpha_{1}}-[\mathcal{J}(e_{4},e_{4},e_{3}),e_{4}]_{\alpha_{1}}
  &=&-(a^{6}+a^{5})e_{1}+(2a^{5}-4a^{6})e_{2}-a^{3}e_{4},
\end{eqnarray*}
which is not equal to $0$ whenever $a\neq 0$. Then
$$\mathcal{J}(e_{3},e_{4},[e_{4},e_{3}]_{\alpha_{1}})-\mathcal{J}(e_{4},e_{4},[e_{3},e_{3}]_{\alpha_{1}})\neq [\mathcal{J}(e_{3},e_{4},e_{3}),e_{4}]_{\alpha_{1}}-[\mathcal{J}(e_{4},e_{4},e_{3}),e_{4}]_{\alpha_{1}}.$$
So $(M^{3}(3,1),[-,-]_{\alpha_{1}})$ is not a Malcev superalgebra.\\
Now we provide a twisting of $M^{3}(3,1)$ into a Hom-Lie superalgebra. For example, consider the  class of even superalgebra morphisms $\alpha_{2}:M^{3}(3,1)\longrightarrow M^{3}(3,1)$ given by the matrix
$$\alpha_{2}=\left(
  \begin{array}{cccc}
    0 & 0 & b & 0 \\
    0 & 0 & c & d \\
    0 & 0 & \frac{1}{2} & 0 \\
    0 & 0 & 0 & 0 \\
  \end{array}
\right)$$
where $b,c,d \in \mathbb{K}$. By Theorem $(\ref{inducedMALCEV})$, $M^{3}(3,1)_{\alpha_{2}}=(M^{3}(3,1),[-,-]_{\alpha_{2}},\alpha_{2})$ is a  Hom-Malcev superalgebra with multiplication table $:$
$$\begin{tabular}{|l|l|l|l|l|}
  \hline
  $[-,-]_{\alpha_{2}}$ & $e_{1}$ & $e_{2}$ & $e_{3}$ & $e_{4}$ \\ \hline
  $e_{1}$ & $0$ & $0$ & $0$ & $0$ \\ \hline
  $e_{2}$ & $0$ & $0$ & $0$ & $0$ \\ \hline
  $e_{3}$ & $0$ & $0$ & $0$ & $-de_{2}$ \\ \hline
 $ e_{4}$ & $0$ & $0$ & $de_{2}$ & $0$ \\
  \hline
\end{tabular}$$
It turns out  that $\widetilde{J}_{M^{3}(3,1)_{\alpha_{2}}}=0$, which implies that $M^{3}(3,1)_{\alpha_{2}}$ is a Hom-Lie superalgebra and  clearly  $(M^{3}(3,1),[-,-]_{\alpha_{2}})$ is  a Lie superalgebra.
\end{exa}
\begin{center}\section{Hom-Alternative Superalgebras are Hom-Malcev-admissible}\end{center}
The main purpose of this section is to show that every Hom-alternative superalgebra gives rise to a Hom-Malcev superalgebra via the super-commutator bracket $($Theorem $(\ref{hAandMalcev admissible}))$. This means that Hom-alternative superalgebras are all Hom-Malcev-admissible superalgebras, generalizing the well-known fact that alternative superalgebras are Malcev-admissible. At the end of this section, we consider a $6$-dimensional, $($non-Hom-Lie$)$ Hom-Malcev superalgebra arising from the alternative superalgebra $($Example $(\ref{exp-B(4,2)})).$
\begin{df}\label{supercommutator} Let $(\mathcal{A},\mu,\alpha)$ be a Hom-superalgebra. Define its \textsf{super-commutator} Hom-superalgebra as the Hom-superalgebra
$$\mathcal{A}^{-}=(\mathcal{A},[-,-],\alpha)$$
where $[x,y]=\mu(x,y)-(-1)^{|x||y|}\mu(y,x)$ for all $x,y \in \mathcal{H}(\mathcal{A})$.\\
The multiplication $[-,-]$ is called the \textsf{super-commutator bracket} of $\mu$. We call a Hom-superalgebra $\mathcal{A}$ \textsf{Hom-Malcev-admissible} $($resp. \textsf{Hom-Lie-admissible} \cite{ammar2010hom}) if $\mathcal{A}^{-}$ is a Hom-Malcev $($resp. Hom-Lie$)$ superalgebra $($Definition (\ref{hom-lie})).
\end{df}
It is proved in \cite{ammar2010hom} that, given a Hom-associative superalgebra $\mathcal{A}$, its super-commutator Hom-superalgebra $\mathcal{A}^{-}$ is a Hom-Lie superalgebra. Also, it is known that  the super-commutator superalgebra of any alternative superalgebra is a Malcev-admissible superalgebra. The following main result of this section generalizes both of these facts. It gives us a large class of Hom-Malcev-admissible superalgebras that are in general not Hom-Lie admissible.
\begin{thm}\label{hAandMalcev admissible} Every Hom-alternative superalgebra is Hom-Malcev-admissible.
\end{thm}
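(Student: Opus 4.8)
The plan is to verify that the super-commutator Hom-superalgebra $\mathcal{A}^{-}=(\mathcal{A},[-,-],\alpha)$ of a Hom-alternative superalgebra $(\mathcal{A},\mu,\alpha)$ satisfies the Hom-Malcev super-identity $(\ref{hom-M-super})$, where $[x,y]=\mu(x,y)-(-1)^{|x||y|}\mu(y,x)$. Rather than expand $(\ref{hom-M-super})$ directly, I would work with the equivalent form $(\ref{hom-M-super3})$ supplied by Proposition $(\ref{carac hom-malcev})$, since it is written entirely in terms of iterated brackets and is therefore the cleanest target once everything is pushed down to the level of $\mu$. Thus the goal reduces to checking a single (sign-laden) identity among fourfold products built from $\mu$ and $\alpha$.

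The technical engine rests on three preliminary facts. First, from the left and right Hom-alternative super-identities $(\ref{sa1})$ and $(\ref{sa2})$ together with Lemma $(\ref{propriet ass H-A})$, the Hom-associator $\widetilde{as_{\mathcal{A}}}$ is \emph{super-alternating}: any transposition of its three arguments reverses its sign up to the appropriate Koszul factor. Second, I would record a Hom-type super-Teichm\"uller identity relating $\widetilde{as_{\mathcal{A}}}$ of a product to associators of the factors; this is the Hom/super-graded analogue of the classical Teichm\"uller relation and follows by a direct expansion using multiplicativity of $\alpha$. Third, I would prove a preliminary lemma expressing the Hom-super-Jacobian $\widetilde{J}_{\mathcal{A}^{-}}(x,y,z)$ of the commutator as a signed sum of Hom-associators $\widetilde{as_{\mathcal{A}}}$ over the permutations of $x,y,z$; because $\widetilde{as_{\mathcal{A}}}$ is super-alternating this sum collapses to a single associator up to a scalar, which is what ultimately produces the factor $2$ on the left-hand side of $(\ref{hom-M-super})$.

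The heart of the argument is the $\mathbb{Z}_{2}$-graded Hom-Bruck--Kleinfeld function $\widetilde{f}(w,x,y,z)$, the super/Hom deformation of the classical four-variable function, built from $\widetilde{as_{\mathcal{A}}}(\mu(w,x),\alpha(y),\alpha(z))$ and corrected by terms of the form $\mu(\alpha(w),\widetilde{as_{\mathcal{A}}}(x,y,z))$ and $\mu(\widetilde{as_{\mathcal{A}}}(w,y,z),\alpha(x))$. Using the super-Teichm\"uller identity together with the super-alternating property of $\widetilde{as_{\mathcal{A}}}$, I would show that $\widetilde{f}$ is totally super-alternating in its four arguments. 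Feeding the Jacobian-to-associator lemma into both sides of $(\ref{hom-M-super3})$ then rewrites that identity as an equality among values of $\widetilde{f}$, and its super-skewsymmetry forces the required cancellations, establishing $(\ref{hom-M-super3})$ and hence the Hom-Malcev super-identity for $\mathcal{A}^{-}$.

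The main obstacle is not conceptual but combinatorial: every rearrangement in the super setting carries a Koszul sign $(-1)^{|\cdot||\cdot|}$, and the Hom setting forces $\alpha$ into prescribed slots that must be tracked through multiplicativity. The single most delicate step is the proof that $\widetilde{f}$ is super-alternating, since a sign slip there would propagate into the final matching; I expect this to require the super-Teichm\"uller identity in combination with \emph{both} the left and right Hom-alternative identities, used symmetrically. Once the super-skewsymmetry of $\widetilde{f}$ is in hand, the comparison of the two sides of $(\ref{hom-M-super3})$ is a finite, if lengthy, verification.
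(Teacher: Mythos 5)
Your proposal is correct and follows essentially the same route as the paper: super-alternativity of the Hom-associator, the Hom-Teichm\"uller identity, the totally super-alternating $\mathbb{Z}_{2}$-graded Hom-Bruck--Kleinfeld function, and the collapse $\widetilde{J}_{\mathcal{A}^{-}}=6\,\widetilde{as_{\mathcal{A}}}$. The only cosmetic difference is that the paper verifies the Hom-Malcev super-identity $(\ref{hom-M-super})$ directly via the intermediate identity of Proposition $(\ref{pop-utilise})$, namely $2[\alpha^{2}(t),\widetilde{as_{\mathcal{A}}}(x,y,z)]=\sum\widetilde{as_{\mathcal{A}}}(\alpha(t),\alpha(x),[y,z])$, rather than routing through the equivalent formulation supplied by Proposition $(\ref{carac hom-malcev})$.
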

The proof of Theorem $(\ref{hAandMalcev admissible})$ depends on the Hom-type analogues of some identities on alternative superalgebras, see for classical case  \cite{helena-super-jor-alte-malcev-with,TRU-rep-alter}. We will first establish some identities about the Hom-associator and the Hom-super-Jacobian. Then we will go back to the proof of Theorem (\ref{hAandMalcev admissible}). In what follows, we often write $\mu(x,y)$ as $xy$ and omit the subscript in the Hom-associator $\widetilde{as_{\mathcal{A}}}$ $(\ref{has})$ when there is no danger of confusion.\\
The following result is a sort of cocycle condition for the Hom-associator of a Hom-alternative superalgebra.
\begin{lem}\label{hom-prop} Let $(\mathcal{A},\mu,\alpha)$ be a Hom-alternative superalgebra. Then the identity
\begin{eqnarray}\label{hom-prop-eg2}
  &&\widetilde{as_{\mathcal{A}}}(t x,\alpha(y),\alpha(z))-(-1)^{|t|(|x|+|y|+|z|)}\widetilde{as_{\mathcal{A}}}(x y,\alpha(z),\alpha(t))
 +(-1)^{(|t|+|x|)(|y|+|z|)}\widetilde{as_{\mathcal{A}}}(y z,\alpha(t),\alpha(x))\nonumber\\
 &&\ \ \ \ \  =\alpha^{2}(t)\widetilde{as_{\mathcal{A}}}(x,y,z)+\widetilde{as_{\mathcal{A}}}(t,x,y)\alpha^{2}(z)
  \end{eqnarray}
  holds for all $x,y,z$ and $t$ in $\mathcal{H}(\mathcal{A})$.\\
  The identity (\ref{hom-prop-eg2}) is said to be the Hom-Teichm\"{u}ller super-identity (see  \cite{klein-alter}).
\end{lem}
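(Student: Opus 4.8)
The plan is to verify the Hom-Teichmüller super-identity \eqref{hom-prop-eg2} by a direct expansion of every Hom-associator into its two constituent $\mu\circ(\mu\otimes\alpha-\alpha\otimes\mu)$ terms, and then to match the resulting products against the two terms on the right-hand side, using the left and right Hom-alternative super-identities \eqref{sa1}, \eqref{sa2} (equivalently the linearized forms of Lemma \ref{propriet ass H-A}) to pair off the leftover terms. Writing $xy$ for $\mu(x,y)$, each of the five Hom-associators appearing in the identity expands into two summands of the shape (product)$\cdot\alpha(\cdot)$ and $\alpha(\cdot)\cdot$(product), using the multiplicativity relation $\alpha(xy)=\alpha(x)\alpha(y)$ to rewrite images such as $\alpha(tx)=\alpha(t)\alpha(x)$ wherever a twisted argument reappears. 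After this expansion there will be ten terms on the left and four on the right, all of them length-four bracketings with a single $\alpha^2$ or a compatible distribution of $\alpha$'s.

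The core idea is that the Hom-associator is, up to sign, super-alternating in all three slots (Lemma \ref{propriet ass H-A} plus \eqref{sa1} and \eqref{sa2}), so the ``mismatched'' quadruple products that survive the first expansion can be re-expressed as associators again. Concretely, I would group the ten left-hand terms into five pairs, each pair differing by a transposition of two adjacent arguments; within each pair I would apply either \eqref{sa1} or \eqref{sa2} to convert the difference into a single Hom-associator, and then collect these. I expect the $\alpha^2(t)\,\widetilde{as}_{\mathcal{A}}(x,y,z)$ and $\widetilde{as}_{\mathcal{A}}(t,x,y)\,\alpha^2(z)$ terms to emerge precisely as the residue after all cancellations, with the Koszul signs $(-1)^{|t|(|x|+|y|+|z|)}$ and $(-1)^{(|t|+|x|)(|y|+|z|)}$ in \eqref{hom-prop-eg2} being exactly the signs produced by the successive adjacent transpositions demanded by the super-alternating property.

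The main obstacle will be bookkeeping of the Koszul signs: every application of \eqref{sa1} or \eqref{sa2} introduces a factor $(-1)^{|a||b|}$ for the swapped pair, and every use of multiplicativity to push $\alpha$ through a product is sign-neutral only because $\alpha$ is even, so the danger is entirely in tracking the accumulated parity exponents through roughly a dozen term manipulations. To control this, I would fix at the outset the convention that all rearrangements move arguments past one another by adjacent transpositions only, so that each sign is unambiguous, and I would verify the identity degree-by-degree is unnecessary since the whole computation is $\mathbb{Z}_2$-graded-linear in each of $x,y,z,t$; thus checking the signed combinatorics on homogeneous elements suffices.

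A clean way to organize the argument, rather than a brute-force ten-term expansion, is to observe that both sides are $\mathbb{K}$-linear in each variable and to use the linearization of the alternating identities: the left Hom-alternative identity says $\widetilde{as}_{\mathcal{A}}(x,y,z)$ is super-antisymmetric in $(x,y)$ and the right one in $(y,z)$. Feeding the product $tx$ (respectively $xy$, $yz$) into the first slot of an associator and expanding, the double products $((tx)\alpha(y))\alpha^2(z)$ and $\alpha(tx)(\alpha(y)\alpha(z))$ appear, and after replacing $\alpha(tx)$ by $\alpha(t)\alpha(x)$ one recognizes these as pieces of $\alpha^2(t)\,\widetilde{as}_{\mathcal{A}}(x,y,z)$ and $\widetilde{as}_{\mathcal{A}}(t,x,y)\,\alpha^2(z)$ after re-associating via \eqref{sa1}/\eqref{sa2}. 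I therefore expect the proof to reduce, after the sign audit, to a single page of aligned equalities, with no step harder than one antisymmetrization.
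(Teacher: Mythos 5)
Your plan is essentially the paper's own proof: the paper first uses the super-alternating property of the Hom-associator to rewrite the three signed left-hand terms as $\widetilde{as_{\mathcal{A}}}(tx,\alpha(y),\alpha(z))-\widetilde{as_{\mathcal{A}}}(\alpha(t),xy,\alpha(z))+\widetilde{as_{\mathcal{A}}}(\alpha(t),\alpha(x),yz)$, then expands into six products, recombines four of them into $\widetilde{as_{\mathcal{A}}}(t,x,y)\alpha^{2}(z)+\alpha^{2}(t)\widetilde{as_{\mathcal{A}}}(x,y,z)$ directly from the definition of the Hom-associator, and cancels the leftover pair $(\alpha(t)\alpha(x))\alpha(yz)-\alpha(tx)(\alpha(y)\alpha(z))$ by multiplicativity of $\alpha$. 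Your ``ten terms on the left'' should be six (three associators times two), and the alternative identities are needed only in the initial normalization of the signed permuted associators rather than in the final regrouping, but these are bookkeeping slips that do not affect the soundness of the approach.
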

\begin{proof} Let $x,y,z$ and $t$ in $\mathcal{H}(\mathcal{A})$, we have
\begin{eqnarray*}
&&\widetilde{as_{\mathcal{A}}}(t x,\alpha(y),\alpha(z))-(-1)^{|t|(|x|+|y|+|z|)}\widetilde{as_{\mathcal{A}}}(x y,\alpha(z),\alpha(t))
+(-1)^{(|t|+|x|)(|y|+|z|)}\widetilde{as_{\mathcal{A}}}(y z,\alpha(t),\alpha(x))\\
&&= \widetilde{as_{\mathcal{A}}}(t x,\alpha(y),\alpha(z))-\widetilde{as_{\mathcal{A}}}(\alpha(t),x y, \alpha(z))
+\widetilde{as_{\mathcal{A}}}(\alpha(t),\alpha(x),y z)\\
&&=((t x)\alpha(y))\alpha^{2}(z)-\alpha(t x)(\alpha(y)\alpha(z))-(\alpha(t)(x y))\alpha^{2}(z)
\\  && +\alpha^{2}(t)((x y)\alpha(z))+(\alpha(t) \alpha(x))\alpha(y z)-\alpha^{2}(t)(\alpha(x)(y z))\\
&&=\Big((t x)\alpha(y)-\alpha(t)(x y)\Big)\alpha^{2}(z)+\alpha^{2}(t)\Big((x y)\alpha(z)-\alpha(x)(y z) \Big)
+(\alpha(t) \alpha(x))\alpha(y z)-\alpha(t x)(\alpha(y)\alpha(z))\\
&&=\alpha^{2}(t)\widetilde{as_{\mathcal{A}}}(x,y,z)+\widetilde{as_{\mathcal{A}}}(t,x,y)\alpha^{2}(z).
\end{eqnarray*}
In the third equality above, we used the multiplicativity of $\alpha$ twice. \end{proof}
In a Hom-alternative superalgebra, the Hom-associator is an super-alternating map on three variables. We now build a map
on four variables using the Hom-associator that is super-alternating in a Hom-alternative superalgebra.
\begin{df}  Let $(\mathcal{A},\mu,\alpha)$ be a Hom-superalgebra. Define the $\mathbb{Z}_{2}-$graded Hom-Bruck-Kleinfled function $f:\mathcal{A}\times\mathcal{A}\times\mathcal{A}\times\mathcal{A}\longrightarrow\mathcal{A}$ as the even multi-linear map
   \begin{eqnarray}\label{foncf}
  f(t,x,y,z) &=& \widetilde{as_{\mathcal{A}}}(t x,\alpha(y),\alpha(z))
   - (-1)^{|t|(|x|+|y|+|z|)}\widetilde{as_{\mathcal{A}}}(x,y,z)\alpha^{2}(t)\nonumber\\
   &&-(-1)^{|t||x|}\alpha^{2}(x)\widetilde{as_{\mathcal{A}}}(t,y,z)
\end{eqnarray}
for all $x,y,z$ and $t$ in $\mathcal{H}(\mathcal{A})$. Define another even multi-linear map $F:\mathcal{A}\times\mathcal{A}\times\mathcal{A}\times\mathcal{A}\longrightarrow\mathcal{A}$
\begin{equation}\label{FONCTION}
    F=[-,-]\circ (\alpha^{2}\otimes \widetilde{as_{\mathcal{A}}})\circ (Id-\zeta+\zeta^{2}-\zeta^{3})
\end{equation}
where $[-,-]$ is the super-commutator bracket of $\mu$ and $\zeta$ is the cyclic permutation
$$\zeta(t,x,y,z)=(z,t,x,y).$$
In terms of elements, the map $F$ is given by
\begin{eqnarray}\label{exprF}
 F(t,x,y,z)&=&[\alpha^{2}(t),\widetilde{as_{\mathcal{A}}}(x,y,z)]-(-1)^{|z|(|t|+|x|+|y|)}[\alpha^{2}(z),\widetilde{as_{\mathcal{A}}}(t,x,y)]\nonumber\\
    && +(-1)^{(|t|+|x|)(|y|+|z|)}[\alpha^{2}(y),\widetilde{as_{\mathcal{A}}}(z,t,x)]\nonumber\\
 &&-(-1)^{|t|(|x|+|y|+|z|)}[\alpha^{2}(x),\widetilde{as_{\mathcal{A}}}(y,z,t)]
\end{eqnarray}
for all $x,y,z$ and $t$ in $\mathcal{H}(\mathcal{A})$.
\end{df}
\begin{lem}\label{f et F} Given a Hom-alternative superalgebra $(\mathcal{A},\mu,\alpha)$, we have$:$
$$F=f \circ  (Id-\rho+\rho^{2})$$
where  $\rho=\zeta^{3}$ is the cyclic permutation $\rho(t,x,y,z)=(x,y,z,t).$\\
In terms of elements, the even map $F$ is given by
\begin{eqnarray*}
  F(t,x,y,z) &=&f(t,x,y,z)-(-1)^{|t|(|x|+|y|+|z|)}f(x,y,z,t)  \\
   &&+(-1)^{(|t|+|x|)(|z|+|y|)}f(y,z,t,x)
\end{eqnarray*}
for all $x,y,z$ and $t$ in $\mathcal{H}(\mathcal{A})$.
\end{lem}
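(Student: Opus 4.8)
The plan is to start from the right-hand side $f\circ(Id-\rho+\rho^2)$, expand each of the three copies of $f$ by its defining formula $(\ref{foncf})$, and then reassemble the four super-commutators that constitute $F$ in $(\ref{exprF})$. Writing out
$f(t,x,y,z)-(-1)^{|t|(|x|+|y|+|z|)}f(x,y,z,t)+(-1)^{(|t|+|x|)(|y|+|z|)}f(y,z,t,x)$
produces nine summands: three \emph{product} terms of the shape $\widetilde{as_{\mathcal{A}}}(\text{product},\alpha(\cdot),\alpha(\cdot))$, and six \emph{tail} terms of the shape $\alpha^{2}(\cdot)\,\widetilde{as_{\mathcal{A}}}(\cdots)$ or $\widetilde{as_{\mathcal{A}}}(\cdots)\,\alpha^{2}(\cdot)$. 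The two kinds of summands are dealt with by two different tools.

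First I would treat the three product terms. With the signs dictated by $\rho$ and $\rho^{2}$, these are exactly the left-hand side of the Hom-Teichm\"{u}ller super-identity of Lemma $(\ref{hom-prop})$, so by $(\ref{hom-prop-eg2})$ they collapse to $\alpha^{2}(t)\widetilde{as_{\mathcal{A}}}(x,y,z)+\widetilde{as_{\mathcal{A}}}(t,x,y)\alpha^{2}(z)$. These two products are destined to become the ``left'' half of the first super-commutator $[\alpha^{2}(t),\widetilde{as_{\mathcal{A}}}(x,y,z)]$ and the ``right'' half of the second super-commutator $-(-1)^{|z|(|t|+|x|+|y|)}[\alpha^{2}(z),\widetilde{as_{\mathcal{A}}}(t,x,y)]$ of $(\ref{exprF})$, once each bracket is unpacked via Definition $(\ref{supercommutator})$ as $ab-(-1)^{|a||b|}ba$.

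Next I would handle the six tail terms. Each carries an associator whose argument order does not yet match the one in $(\ref{exprF})$, so the key tool is the super-alternating behaviour of the Hom-associator: the left and right Hom-alternative super-identities $(\ref{sa1})$ and $(\ref{sa2})$ give $\widetilde{as_{\mathcal{A}}}(a,b,c)=-(-1)^{|a||b|}\widetilde{as_{\mathcal{A}}}(b,a,c)=-(-1)^{|b||c|}\widetilde{as_{\mathcal{A}}}(a,c,b)$, and together with Lemma $(\ref{propriet ass H-A})$ these yield the cyclic-rotation relations such as $\widetilde{as_{\mathcal{A}}}(y,z,t)=(-1)^{|t|(|y|+|z|)}\widetilde{as_{\mathcal{A}}}(t,y,z)$, $\widetilde{as_{\mathcal{A}}}(z,t,x)=(-1)^{|x|(|z|+|t|)}\widetilde{as_{\mathcal{A}}}(x,z,t)$ and $\widetilde{as_{\mathcal{A}}}(y,t,x)=(-1)^{|y|(|x|+|t|)}\widetilde{as_{\mathcal{A}}}(t,x,y)$. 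Applying these term by term rewrites each tail term so that its associator acquires the argument order demanded by $(\ref{exprF})$, after which each tail term supplies exactly one of the still-missing halves of the four brackets.

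Finally I would collect everything: the two outputs of the Teichm\"{u}ller step furnish one half of each of the first two commutators, while the six rewritten tail terms furnish the remaining halves of those two plus both halves of the third and fourth commutators $(-1)^{(|t|+|x|)(|y|+|z|)}[\alpha^{2}(y),\widetilde{as_{\mathcal{A}}}(z,t,x)]$ and $-(-1)^{|t|(|x|+|y|+|z|)}[\alpha^{2}(x),\widetilde{as_{\mathcal{A}}}(y,z,t)]$. Recombining $ab-(-1)^{|a||b|}ba$ into $[a,b]$ in each case reproduces precisely $(\ref{exprF})$, i.e.\ $F(t,x,y,z)$. The hard part will be purely the modulo-two bookkeeping of the Koszul signs: one must check, for instance, that after the rotation $\widetilde{as_{\mathcal{A}}}(z,t,x)\mapsto\widetilde{as_{\mathcal{A}}}(x,z,t)$ the prefactor $(-1)^{(|t|+|x|)(|y|+|z|)}$ from $\rho^{2}$ combines with the rotation sign $(-1)^{|x|(|z|+|t|)}$ to reduce exactly to the exponent $|t|(|x|+|y|+|z|)+|x||y|$ attached to the corresponding term coming from $f(x,y,z,t)$. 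Since the underlying identities are already established, the entire argument reduces to this sign accounting.
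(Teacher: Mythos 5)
Your proposal is correct and is essentially the paper's own proof read in the opposite direction: the paper starts from the Hom-Teichm\"{u}ller super-identity of Lemma (\ref{hom-prop}), substitutes the definition (\ref{foncf}) of $f$ into its three product terms, and then uses the super-alternativity of $\widetilde{as_{\mathcal{A}}}$ to regroup the remaining tail terms into the four super-commutators of (\ref{exprF}), whereas you expand $f\circ(Id-\rho+\rho^{2})$ first and invoke the same two ingredients to arrive at $F$. The decomposition into product and tail terms, the key lemmas used, and the final sign bookkeeping are identical, so this is the same argument.
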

\begin{proof} By Lemma $(\ref{hom-prop})$ and definition $(\ref{foncf})$, we have
\begin{eqnarray*}
\alpha^{2}(t)\widetilde{as_{\mathcal{A}}}(x,y,z)+\widetilde{as_{\mathcal{A}}}(t,x,y)\alpha^{2}(z)
&=&\widetilde{as_{\mathcal{A}}}(t x,\alpha(y),\alpha(z))\\
&&-(-1)^{|t|(|x|+|y|+|z|)}\widetilde{as_{\mathcal{A}}}(x y,\alpha(z),\alpha(t))\\
&&+(-1)^{(|t|+|x|)(|y|+|z|)}\widetilde{as_{\mathcal{A}}}(y z,\alpha(t),\alpha(x))\\
&=&f(t,x,y,z)+(-1)^{|t|(|x|+|y|+|z|)}\widetilde{as_{\mathcal{A}}}(x,y,z)\alpha^{2}(t)\\
&&+(-1)^{|x||t|}\alpha^{2}(x)\widetilde{as_{\mathcal{A}}}(t,y,z)-(-1)^{|t|(|x|+|y|+|z|)}f(x,y,z,t)\\
&&-(-1)^{|t|(|y|+|z|)+|x|(|y|+|z|)}\widetilde{as_{\mathcal{A}}}(y,z,t)\alpha^{2}(x)\\
&&-(-1)^{|t|(|x|+|y|+|z|)+|x||y|}\alpha^{2}(y)\widetilde{as_{\mathcal{A}}}(x,z,t)\\
&&+(-1)^{(|t|+|x|)(|z|+|y|)}f(y,z,t,x)\\
&&+(-1)^{|z|(|t|+|x|+|y|)}\widetilde{as_{\mathcal{A}}}(z,t,x)\alpha^{2}(y)\\
&&+(-1)^{|t|(|y|+|z|)+|x|(|y|+|z|)+|y||z|}\alpha^{2}(z)\widetilde{as_{\mathcal{A}}}(y,t,x).
\end{eqnarray*}
Since the Hom-associator $\widetilde{as_{\mathcal{A}}}$ is super-alternating, we have$:$
\begin{eqnarray*}
\widetilde{as_{\mathcal{A}}}(y,t,x)&=&(-1)^{|y|(|t|+|x|)}\widetilde{as_{\mathcal{A}}}(y,t,x),\\
\widetilde{as_{\mathcal{A}}}(x,z,t)&=&(-1)^{|z|(|x|+|t|)}\widetilde{as_{\mathcal{A}}}(z,t,x),
\end{eqnarray*}
 and
 \begin{eqnarray*}
 \widetilde{as_{\mathcal{A}}}(t,y,z)&=&(-1)^{|y|(|t|+|z|)}\widetilde{as_{\mathcal{A}}}(y,z,t).
\end{eqnarray*}
\\ Therefore, rearranging terms in the above equality, we have$:$
\begin{eqnarray*}
 [\alpha^{2}(t),\widetilde{as_{\mathcal{A}}}(x,y,z)]&=&\alpha^{2}(t)\widetilde{as_{\mathcal{A}}}(x,y,z)\\
&&-(-1)^{|t|(|x|+|y|+|z|)}\widetilde{as_{\mathcal{A}}}(x,y,z)\alpha^{2}(t),\\
-(-1)^{|z|(|t|+|x|+|y|)}[\alpha^{2}(z),\widetilde{as_{\mathcal{A}}}(t,x,y)]&=&
\widetilde{as_{\mathcal{A}}}(t,x,y)\alpha^{2}(z)\\
&&-(-1)^{|t|(|y|+|z|)+|x|(|y|+|z|)+|y||z|}\alpha^{2}(z)\widetilde{as_{\mathcal{A}}}(y,t,x) ,\\
(-1)^{(|t|+|x|)(|y|+|z|)}[\alpha^{2}(y),\widetilde{as_{\mathcal{A}}}(z,t,x)]&=&
(-1)^{|t|(|x|+|y|+|z|)+|x||y|}\alpha^{2}(y)\widetilde{as_{\mathcal{A}}}(x,z,t)\\
&&-(-1)^{|y|(|x|+|t|+|z|)}\widetilde{as_{\mathcal{A}}}(z,t,x)\alpha^{2}(y),
\end{eqnarray*}
and
\begin{eqnarray*}
\hskip-1cm-(-1)^{|t|(|x|+|y|+|z|)}[\alpha^{2}(x),\widetilde{as_{\mathcal{A}}}(y,z,t)]&=&
-(-1)^{|x||t|}\alpha^{2}(x)\widetilde{as_{\mathcal{A}}}(t,y,z)\\
\hskip-1cm&&+ (-1)^{|t|(|y|+|z|)+|x|(|y|+|z|)}\widetilde{as_{\mathcal{A}}}(y,z,t)\alpha^{2}(x).
\end{eqnarray*}
Then, we obtain $F=f \circ  (Id-\rho+\rho^{2})$ in the explicit form $(\ref{exprF})$.
\end{proof}
\begin{prop} Let $(\mathcal{A},\mu,\alpha)$ be a Hom-alternative superalgebra. Then the $\mathbb{Z}_{2}-$graded Hom-Bruck-Kleinfled function $f$ is super-alternating.
\end{prop}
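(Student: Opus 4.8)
The plan is to prove that $f$ is super-alternating by checking that it is super-antisymmetric under each of the three adjacent transpositions of its arguments, namely $(y,z)$, $(x,y)$ and $(t,x)$, carrying the appropriate Koszul signs. Since $\mathrm{char}\,\K=0$ and these transpositions generate the graded symmetric group on four letters, this is equivalent to $f$ being super-alternating. The transposition of the last two slots is immediate: in each of the three summands defining $f$ in \eqref{foncf} the arguments $y$ and $z$ occupy the last two slots of a Hom-associator, so the right Hom-alternative super-identity \eqref{sa2} lets me factor out a common sign $-(-1)^{|y||z|}$ and obtain $f(t,x,y,z)=-(-1)^{|y||z|}f(t,x,z,y)$. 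The two remaining transpositions are where the genuine content lies, because in them the variables being exchanged are entangled inside the product $t\,x$ sitting in the first slot of the leading Hom-associator $\widetilde{as_{\mathcal{A}}}(tx,\alpha(y),\alpha(z))$.

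For the transposition $(t,x)$ I would form the combination $f(t,x,y,z)+(-1)^{|t||x|}f(x,t,y,z)$ and show it vanishes. Using the multiplicativity of $\alpha$ and regrouping, the ``plain'' Hom-associator summands of the two copies of $f$ combine so that the terms $\alpha^{2}(t)\widetilde{as_{\mathcal{A}}}(x,y,z)$ and $(-1)^{|t||x|}\alpha^{2}(x)\widetilde{as_{\mathcal{A}}}(t,y,z)$ cancel directly against their counterparts, while the entangled leading terms $\widetilde{as_{\mathcal{A}}}(tx,\alpha(y),\alpha(z))$ and $(-1)^{|t||x|}\widetilde{as_{\mathcal{A}}}(xt,\alpha(y),\alpha(z))$ must be disentangled. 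This is exactly where the Hom-Teichm\"{u}ller super-identity \eqref{hom-prop-eg2} of Lemma \ref{hom-prop} enters: I substitute it for each of these two Hom-associators (the second with $t$ and $x$ interchanged), turning them into products of the shapes $\alpha^{2}(\cdot)\,\widetilde{as_{\mathcal{A}}}(\cdot,\cdot,\cdot)$ and $\widetilde{as_{\mathcal{A}}}(\cdot,\cdot,\cdot)\,\alpha^{2}(\cdot)$ together with two further Hom-associators still carrying a product in their first slot. The products recombine with the earlier pieces, and the two terms $\widetilde{as_{\mathcal{A}}}(t,x,y)\alpha^{2}(z)$ and $(-1)^{|t||x|}\widetilde{as_{\mathcal{A}}}(x,t,y)\alpha^{2}(z)$ cancel against one another by the left Hom-alternative super-identity \eqref{sa1}.

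The main obstacle is the cancellation of the leftover Hom-associators that still carry a product in one slot, those of the form $\widetilde{as_{\mathcal{A}}}(xy,\alpha(z),\alpha(t))$ and $\widetilde{as_{\mathcal{A}}}(yz,\alpha(t),\alpha(x))$. A single use of Teichm\"{u}ller does not eliminate them; I expect to apply the Hom-Teichm\"{u}ller super-identity \eqref{hom-prop-eg2} a second time to these residual terms and to invoke the \emph{full} super-alternation of the Hom-associator, that is, not only \eqref{sa1}--\eqref{sa2} but also the reversal identity of Lemma \ref{propriet ass H-A}, so that they finally cancel in pairs. The delicate point throughout is the Koszul-sign bookkeeping across these repeated substitutions: each interchange of homogeneous arguments and each use of multiplicativity of $\alpha$ (which keeps $\alpha$ appearing only as $\alpha^{2}$ or inside $\alpha(\cdot)$) produces sign factors that must match exactly for the terms to annihilate. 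Once the $(t,x)$ case is settled, the transposition $(x,y)$ is handled by the same scheme, forming $f(t,x,y,z)+(-1)^{|x||y|}f(t,y,x,z)$, disentangling the leading Hom-associator via \eqref{hom-prop-eg2}, and clearing the residue with \eqref{sa1}, \eqref{sa2} and Lemma \ref{propriet ass H-A}; this completes the verification that $f$ is super-alternating.
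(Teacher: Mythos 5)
Your reduction to the three adjacent transpositions is legitimate, and the $(y,z)$ case is indeed immediate from \eqref{sa2}. The gap lies in the two remaining cases, and it is a real one: the cancellation scheme you describe does not close. Write out $S=f(t,x,y,z)+(-1)^{|t||x|}f(x,t,y,z)$ from \eqref{foncf}. Contrary to what you assert, the four ``plain'' summands do not cancel directly; they pair into two super-anticommutators such as $-\bigl(\alpha^{2}(t)\,\widetilde{as_{\mathcal{A}}}(x,y,z)+(-1)^{|t|(|x|+|y|+|z|)}\widetilde{as_{\mathcal{A}}}(x,y,z)\,\alpha^{2}(t)\bigr)$, which are not zero. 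Substituting the Hom-Teichm\"uller identity \eqref{hom-prop-eg2} into $\widetilde{as_{\mathcal{A}}}(tx,\alpha(y),\alpha(z))$ and (with $t\leftrightarrow x$) into $\widetilde{as_{\mathcal{A}}}(xt,\alpha(y),\alpha(z))$ does kill the two left-products $\alpha^{2}(t)\widetilde{as_{\mathcal{A}}}(x,y,z)$ and $(-1)^{|t||x|}\alpha^{2}(x)\widetilde{as_{\mathcal{A}}}(t,y,z)$, and the terms ending in $\alpha^{2}(z)$ and the terms $\widetilde{as_{\mathcal{A}}}(yz,\cdot,\cdot)$ cancel by \eqref{sa1} and \eqref{sa2}; but what survives is
\[
S=(-1)^{|t|(|x|+|y|+|z|)}\bigl(\widetilde{as_{\mathcal{A}}}(xy,\alpha(z),\alpha(t))-\widetilde{as_{\mathcal{A}}}(x,y,z)\alpha^{2}(t)\bigr)+(-1)^{|x|(|y|+|z|)}\bigl(\widetilde{as_{\mathcal{A}}}(ty,\alpha(z),\alpha(x))-\widetilde{as_{\mathcal{A}}}(t,y,z)\alpha^{2}(x)\bigr),
\]
a combination of exactly the same shape as the one you started from: two entangled Hom-associators against two one-sided products. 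Applying \eqref{hom-prop-eg2} a second time, to $\widetilde{as_{\mathcal{A}}}(xy,\alpha(z),\alpha(t))$ and $\widetilde{as_{\mathcal{A}}}(ty,\alpha(z),\alpha(x))$, does reproduce the two wanted right-products, but it also creates fresh left-products $\alpha^{2}(x)\widetilde{as_{\mathcal{A}}}(y,z,t)$, $\alpha^{2}(t)\widetilde{as_{\mathcal{A}}}(y,z,x)$ and fresh entangled terms $\widetilde{as_{\mathcal{A}}}(zt,\alpha(x),\alpha(y))$, $\widetilde{as_{\mathcal{A}}}(zx,\alpha(t),\alpha(y))$, and these do not ``cancel in pairs'' via \eqref{sa1}, \eqref{sa2} or Lemma \ref{propriet ass H-A}. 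Each use of Teichm\"uller merely trades one pair of entangled associators for another; the iteration does not terminate, so the crucial $(t,x)$ and $(x,y)$ antisymmetries are not actually established.

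The idea you are missing is the global one the paper uses: introduce the auxiliary function $F=[-,-]\circ(\alpha^{2}\otimes\widetilde{as_{\mathcal{A}}})\circ(Id-\zeta+\zeta^{2}-\zeta^{3})$ of \eqref{FONCTION}. From its explicit form \eqref{exprF}, $F$ is \emph{manifestly} super-antisymmetric under the cyclic shift of its four arguments (shifting a signed cyclic sum by one step just negates it), while Lemma \ref{f et F} --- a single application of Teichm\"uller, summed over the cyclic permutations --- gives $F=f\circ(Id-\rho+\rho^{2})$. Adding $F$ to its signed cyclic shift then telescopes, leaving $f+(\mathrm{sign})\,f\circ\zeta=0$; this antisymmetry under the $4$-cycle, combined with your easy $(y,z)$ case, generates the full symmetric group and yields the super-alternation. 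If you insist on the transposition-by-transposition route, you must prove the displayed residual identity above, which is essentially equivalent to the $(t,x)$-antisymmetry itself; no local pairwise cancellation will deliver it.
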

\begin{proof} Let $x,y,z$ and $t$ in $\mathcal{H}(\mathcal{A})$, by definition $(\ref{exprF})$ we have
\begin{eqnarray*}
&& F(t,x,y,z)+(-1)^{|t|(|z|+|x|+|t|)}F(x,y,z,t)\\
&& =[\alpha^{2}(t),\widetilde{as_{\mathcal{A}}}(x,y,z)]-(-1)^{|z|(|t|+|x|+|y|)}[\alpha^{2}(z),\widetilde{as_{\mathcal{A}}}(t,x,y)]\\
&&+(-1)^{(|t|+|x|)(|y|+|z|)}[\alpha^{2}(y),\widetilde{as_{\mathcal{A}}}(z,t,x)]
-(-1)^{|t|(|x|+|y|+|z|)}[\alpha^{2}(x),\widetilde{as_{\mathcal{A}}}(y,z,t)]\\
&&+(-1)^{|t|(|z|+|x|+|z|)} [\alpha^{2}(x),\widetilde{as_{\mathcal{A}}}(y,z,t)]-[\alpha^{2}(t),\widetilde{as_{\mathcal{A}}}(x,y,z)]\\
&&+(-1)^{|z|(|x|+|y|+|t|)}[\alpha^{2}(z),\widetilde{as_{\mathcal{A}}}(t,x,y)]
-(-1)^{(|t|+|x|)(|y|+|z|)}[\alpha^{2}(y),\widetilde{as_{\mathcal{A}}}(z,t,x)]\\
&&=0.
\end{eqnarray*}
In other hand by Lemma $(\ref{f et F})$, we have$:$
\begin{eqnarray*}
 F(t,x,y,z) &=&f(t,x,y,z)-(-1)^{|t|(|x|+|y|+|z|)}f(x,y,z,t)  \\
   &=& +(-1)^{(|t|+|x|)(|y|+|z|)}f(y,z,t,x).
\end{eqnarray*}
Then
\begin{eqnarray*}
 F(t,x,y,z)+(-1)^{|t|(|z|+|x|+|z|)}F(x,y,z,t) &=&f(t,x,y,z)+(-1)^{(|t|+|x|)(|y|+|z|)}f(z,t,x,y)  \\
   &=& 0,
\end{eqnarray*}
 which implies that $f$ is super-alternating.
 \end{proof}
Next, we provide further properties of the $\mathbb{Z}_{2}-$graded Hom-Bruck-Kleinfeld function $f$ $($Definition $(\ref{foncf}))$. The following result gives two characterizations of the $\mathbb{Z}_{2}-$graded Hom-Bruck-Kleinfeld function in a Hom-alternative superalgebra $(\mathcal{A},\mu,\alpha)$.
\begin{cor} Let $(\mathcal{A},\mu,\alpha)$ be a Hom-alternative superalgebra. Then the $\mathbb{Z}_{2}-$graded Hom-Bruck-Kleinfeld function $f$ satisfies
\begin{eqnarray}\label{cara f}
    f(t,x,y,z)&=&\frac{1}{3}F(t,x,y,z)\nonumber\\
&=&\widetilde{as_{\mathcal{A}}}([t,x],\alpha(y),\alpha(z))
+(-1)^{(|y|+|z|)(|x|+|t|)}\widetilde{as_{\mathcal{A}}}([y,z],\alpha(t),\alpha(x))
\end{eqnarray}
for all $x,y,z$ and $t$ in $\mathcal{H}(\mathcal{A})$, where $[x,y]=xy-(-1)^{|x||y|}yx.$
\end{cor}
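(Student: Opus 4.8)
The plan is to establish the two displayed equalities separately. The first, $f(t,x,y,z)=\frac{1}{3}F(t,x,y,z)$, is a short consequence of the super-alternating property of $f$ proved in the preceding Proposition; the second is a direct computation from the Hom-Teichm\"uller super-identity of Lemma \ref{hom-prop} together with the super-alternating property of the Hom-associator.

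For the equality $f=\frac{1}{3}F$, I would start from the form $F=f\circ(Id-\rho+\rho^{2})$ supplied by Lemma \ref{f et F}, that is
\begin{equation*}
F(t,x,y,z)=f(t,x,y,z)-(-1)^{|t|(|x|+|y|+|z|)}f(x,y,z,t)+(-1)^{(|t|+|x|)(|z|+|y|)}f(y,z,t,x).
\end{equation*}
Since $f$ is super-alternating, pushing the last argument to the front through three adjacent super-transpositions gives $f(x,y,z,t)=-(-1)^{|t|(|x|+|y|+|z|)}f(t,x,y,z)$, and iterating yields $f(y,z,t,x)=(-1)^{|x|(|y|+|z|+|t|)+|t|(|x|+|y|+|z|)}f(t,x,y,z)$. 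Substituting both relations and reducing each accumulated Koszul exponent modulo $2$, the second and third summands of $F$ each collapse to $f(t,x,y,z)$, so that $F=3f$. This step is routine once the sign exponents are reduced.

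For the second equality, I would expand the two super-associators of super-commutators on the right-hand side by bilinearity of $\widetilde{as_{\mathcal{A}}}$ in its first slot together with $[a,b]=ab-(-1)^{|a||b|}ba$, producing terms built on $\widetilde{as_{\mathcal{A}}}(tx,\alpha(y),\alpha(z))$, $\widetilde{as_{\mathcal{A}}}(xt,\alpha(y),\alpha(z))$, $\widetilde{as_{\mathcal{A}}}(yz,\alpha(t),\alpha(x))$ and $\widetilde{as_{\mathcal{A}}}(zy,\alpha(t),\alpha(x))$. To each ``wrong-order'' product I would apply the Hom-Teichm\"uller super-identity \eqref{hom-prop-eg2} solved for $\widetilde{as_{\mathcal{A}}}(ab,\alpha(c),\alpha(d))$, which trades it for two boundary products of the shape $\alpha^{2}(\cdot)\,\widetilde{as_{\mathcal{A}}}(\cdot,\cdot,\cdot)$ and $\widetilde{as_{\mathcal{A}}}(\cdot,\cdot,\cdot)\,\alpha^{2}(\cdot)$ plus two cyclically shifted three-variable super-associators. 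The first simplification I would exploit is that the term $\widetilde{as_{\mathcal{A}}}(yz,\alpha(t),\alpha(x))$ coming from the Teichm\"uller expansion of $\widetilde{as_{\mathcal{A}}}(tx,\alpha(y),\alpha(z))$ cancels exactly against the leading part of $(-1)^{(|y|+|z|)(|x|+|t|)}\widetilde{as_{\mathcal{A}}}([y,z],\alpha(t),\alpha(x))$, since $(|t|+|x|)(|y|+|z|)=(|y|+|z|)(|x|+|t|)$.

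It then remains to match what survives against the defining formula
\begin{equation*}
f(t,x,y,z)=\widetilde{as_{\mathcal{A}}}(tx,\alpha(y),\alpha(z))-(-1)^{|t|(|x|+|y|+|z|)}\widetilde{as_{\mathcal{A}}}(x,y,z)\alpha^{2}(t)-(-1)^{|t||x|}\alpha^{2}(x)\widetilde{as_{\mathcal{A}}}(t,y,z),
\end{equation*}
and for this I would repeatedly use the super-alternating property of $\widetilde{as_{\mathcal{A}}}$ together with the reversal symmetry of Lemma \ref{propriet ass H-A} to identify the leftover cyclically permuted associators (such as $\widetilde{as_{\mathcal{A}}}(ty,\alpha(z),\alpha(x))$ and $\widetilde{as_{\mathcal{A}}}(zy,\alpha(t),\alpha(x))$) with one another, forcing the spurious three-variable terms to drop out and leaving precisely the two boundary terms above. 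The hard part will be entirely the sign bookkeeping: every Teichm\"uller substitution and every super-transposition contributes a Koszul factor, and the intended cancellations only become visible after each exponent is reduced modulo $2$ and all arguments are brought into a common order. Organizing the four Teichm\"uller expansions so that their extra terms pair off cleanly is the delicate point; once that pairing is arranged, the identity follows by collecting the surviving boundary terms.
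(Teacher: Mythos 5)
Your derivation of $f=\frac{1}{3}F$ is correct and coincides with the paper's: both substitute the super-alternating property of $f$ into $F=f\circ(Id-\rho+\rho^{2})$ from Lemma $(\ref{f et F})$ and check that the two nontrivially permuted summands each collapse to $f(t,x,y,z)$ once the Koszul exponents are reduced modulo $2$.

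The plan for the second equality, however, has a genuine gap. Every instance of the Hom-Teichm\"{u}ller super-identity $(\ref{hom-prop-eg2})$ links the three products $ab$, $bc$, $cd$ read along an ordering $(a,b,c,d)$ of $\{t,x,y,z\}$; since the consecutive pairs $\{a,b\}$ and $\{b,c\}$ share the letter $b$, they cannot both lie in $\{\{t,x\},\{y,z\}\}$, so \emph{every} Teichm\"{u}ller substitution you perform necessarily injects an associator whose first slot is a mixed product such as $xy$, $ty$ or $tz$, which occurs neither in the target identity nor in the definition of $f$. Your proposed mechanism for disposing of these leftovers --- the super-alternating property of $\widetilde{as_{\mathcal{A}}}$ together with Lemma $(\ref{propriet ass H-A})$ --- cannot do it: those symmetries only permute the three arguments of the Hom-associator and never change which two elements are multiplied inside the first slot, so in particular they cannot identify your sample pair $\widetilde{as_{\mathcal{A}}}(ty,\alpha(z),\alpha(x))$ and $\widetilde{as_{\mathcal{A}}}(zy,\alpha(t),\alpha(x))$. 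The deferred ``delicate point'' is therefore not mere sign bookkeeping; with the tools you list the mixed terms do not cancel. The missing ingredient is the super-alternating property of $f$ itself, which is how the paper proceeds: writing $2f(t,x,y,z)=f(t,x,y,z)-(-1)^{|t||x|}f(x,t,y,z)$ and expanding with the definition $(\ref{foncf})$ yields $\widetilde{as_{\mathcal{A}}}([t,x],\alpha(y),\alpha(z))$ equal to $2f(t,x,y,z)$ plus two super-commutators of the form $[\alpha^{2}(\cdot),\widetilde{as_{\mathcal{A}}}(\cdot,\cdot,\cdot)]$, with no Teichm\"{u}ller expansion at all; repeating with $(t,x)$ and $(y,z)$ interchanged, adding, and recognizing the four bracket terms as $-F(t,x,y,z)$ via $(\ref{exprF})$ gives the right-hand side as $(4f-F)(t,x,y,z)=f(t,x,y,z)$. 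You should reroute your second step through the alternativity of $f$ in this way.
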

\begin{proof} by Lemma $(\ref{f et F})$, we have
$F=f \circ  (Id-\rho+\rho^{2})$
but $f$ is super-alternating, then
$F=3f.$
This  proves the first identity in $(\ref{cara f})$. It remains to prove that $f$ is equal to the last entry in $(\ref{cara f})$.\\
Since $f$ is super-alternating, from its definition $(\ref{foncf})$, we have
\begin{eqnarray*}
  2f(t,x,y,z) &=& f(t,x,y,z)-(-1)^{|x||t|}f(x,t,y,z) \\
   &=& \widetilde{as_{\mathcal{A}}}(t x,\alpha(y),\alpha(z))-(-1)^{(|t|(|x|+|y|+|z|)}\widetilde{as_{\mathcal{A}}}(x,y,z)\alpha^{2}(t) \\
   &&-(-1)^{|x||t|} \alpha^{2}(x)\widetilde{as_{\mathcal{A}}}(t,y,z)-(-1)^{|x||t|}\widetilde{as_{\mathcal{A}}}(x t,\alpha(y),\alpha(z)) \\
   &&+(-1)^{|x|(|y|+|z|)} \widetilde{as_{\mathcal{A}}}(t,y,z)\alpha^{2}(x)+\alpha^{2}(t) \widetilde{as_{\mathcal{A}}}(x,y,z).
   \end{eqnarray*}
Rearranging terms we obtain
\begin{eqnarray}\label{lemm}
   \widetilde{as_{\mathcal{A}}}([t,x],\alpha(y),\alpha(z))
   &=&(-1)^{|x||t|}[\alpha^{2}(x),\widetilde{as_{\mathcal{A}}}(t,y,z)]-[\alpha^{2}(t),\widetilde{as_{\mathcal{A}}}(x,y,z)]+2f(t,x,y,z)\nonumber\\
   &=&(-1)^{(|t|(|x|+|y|+|z|)}[\alpha^{2}(x),\widetilde{as_{\mathcal{A}}}(y,z,t)]\nonumber\\
   &&-[\alpha^{2}(t),\widetilde{as_{\mathcal{A}}}(x,y,z)]+ 2f(t,x,y,z),
  \end{eqnarray}
in which $\widetilde{as_{\mathcal{A}}}(t,y,z)=(-1)^{|t|(|y|+|z|)}\widetilde{as_{\mathcal{A}}}(y,z,t)$ because $\widetilde{as_{\mathcal{A}}}$ is super-alternating. Interchanging $(t,x)$ with $(y,z)$ in $(\ref{lemm})$ and using the super-alternativity of $f$ , we obtain
\begin{eqnarray}\label{lemm2}
 (-1)^{(|t|+|x|)(|y|+|z|)} \widetilde{as_{\mathcal{A}}}([y,z],\alpha(t),\alpha(x))&=&(-1)^{(|y|(|t|+|x|+|z|)}[\alpha^{2}(z),\widetilde{as_{\mathcal{A}}}(t,x,y)]\nonumber\\
&&-(-1)^{(|t|+|x|)(|y|+|z|)} [\alpha^{2}(y),\widetilde{as_{\mathcal{A}}}(z,t,x)]\nonumber\\
 &&+2(-1)^{(|t|+|x|)(|y|+|z|)} f(y,z,t,x)\nonumber\\
&=&(-1)^{(|y|(|t|+|x|+|z|)}[\alpha^{2}(z),\widetilde{as_{\mathcal{A}}}(t,x,y)]\nonumber\\
&&-(-1)^{(|t|+|x|)(|y|+|z|)} [\alpha^{2}(y),\widetilde{as_{\mathcal{A}}}(z,t,x)]\nonumber\\
&&+2f(t,x,y,z).
\end{eqnarray}
Adding $(\ref{lemm})$ and $(\ref{lemm2})$, we have$:$
\begin{eqnarray*}
\widetilde{as_{\mathcal{A}}}([t,x],\alpha(y),\alpha(z))&+&(-1)^{(|t|+|x|)(|y|+|z|)} \widetilde{as_{\mathcal{A}}}([y,z],\alpha(t),\alpha(x))\\
&=&(4f-F)(t,x,y,z).
\end{eqnarray*}
Since $F=3f$. This proves that $f$ is equal to to the last entry in $(\ref{cara f})$.
\end{proof}
The following result is the Hom-type analogue of part of \cite {shest-rep-alter} and says that every Hom-alternative superalgebra satisfies a variation of the Hom-Malcev super-identity $(\ref{hom-M-super})$
in which the Hom-super-Jacobian is replaced by the Hom-associator.
\begin{prop}\label{pop-utilise} Let $(\mathcal{A},\mu,\alpha)$ be a Hom-alternative superalgebra. Then
\begin{eqnarray}\label{2ass-hom}
  2 [\alpha^{2}(t),\widetilde{as_{\mathcal{A}}}(x,y,z)]&=&\widetilde{as_{\mathcal{A}}}(\alpha(t),\alpha(x),[y,z])\nonumber\\
  &&+(-1)^{|x|(|y|+|z|)}\widetilde{as_{\mathcal{A}}}(\alpha(t),\alpha(y),[z,x])\nonumber\\
   &&+(-1)^{|z|(|x|+|y|)}\widetilde{as_{\mathcal{A}}}(\alpha(t),\alpha(z),[x,y])
  \end{eqnarray}
for all $x,y,z$ and $t$ in $\mathcal{H}(\mathcal{A})$, where $[-,-]$ is the super-commutator bracket.
\end{prop}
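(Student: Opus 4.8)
The plan is to derive the statement from the two characterizations of the Bruck--Kleinfeld function already in hand, namely (\ref{cara f}) together with $F=3f$, and the key relation (\ref{lemm}); the whole argument is then a sign-careful summation over a cyclic rewriting, with no further structural input.

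First I would simplify the second summand of (\ref{cara f}). Applying the full reversal of Lemma (\ref{propriet ass H-A}) (slots $1\leftrightarrow 3$) followed by the right Hom-alternative swap of slots $2,3$ to $\widetilde{as_{\mathcal{A}}}(\alpha(t),\alpha(x),[y,z])$, the two Koszul signs combine to give
$$\widetilde{as_{\mathcal{A}}}(\alpha(t),\alpha(x),[y,z])=(-1)^{(|t|+|x|)(|y|+|z|)}\widetilde{as_{\mathcal{A}}}([y,z],\alpha(t),\alpha(x)),$$
so that (\ref{cara f}) collapses to the clean form
$$f(t,x,y,z)=\widetilde{as_{\mathcal{A}}}([t,x],\alpha(y),\alpha(z))+\widetilde{as_{\mathcal{A}}}(\alpha(t),\alpha(x),[y,z]).$$
Here the first term on the right is exactly what (\ref{lemm}) expands, and the second term is precisely the first term appearing in the statement. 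Substituting (\ref{lemm}) for $\widetilde{as_{\mathcal{A}}}([t,x],\alpha(y),\alpha(z))$ and solving for the remaining associator yields the single-term identity
$$\widetilde{as_{\mathcal{A}}}(\alpha(t),\alpha(x),[y,z])=[\alpha^{2}(t),\widetilde{as_{\mathcal{A}}}(x,y,z)]-(-1)^{|t|(|x|+|y|+|z|)}[\alpha^{2}(x),\widetilde{as_{\mathcal{A}}}(y,z,t)]-f(t,x,y,z).$$

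Next I would write down this identity for the three cyclic positions of $(x,y,z)$ with $t$ held fixed, weight them by the signs $1$, $(-1)^{|x|(|y|+|z|)}$, $(-1)^{|z|(|x|+|y|)}$ of the statement, and add. Using the cyclic super-alternativity of the Hom-associator (Lemma (\ref{propriet ass H-A})) the three leading commutators collapse to $3[\alpha^{2}(t),\widetilde{as_{\mathcal{A}}}(x,y,z)]$, and by the super-alternativity of $f$ the three Bruck--Kleinfeld terms collapse to $-3f(t,x,y,z)$. After the slot swaps needed to normalize their associators, the three residual commutators are seen to sum to $F(t,x,y,z)-[\alpha^{2}(t),\widetilde{as_{\mathcal{A}}}(x,y,z)]$, with $F$ given explicitly by (\ref{exprF}). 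Adding the three pieces and invoking $F=3f$ to cancel the Bruck--Kleinfeld terms against $F$ (and one copy of the leading commutator), the net result is exactly $2[\alpha^{2}(t),\widetilde{as_{\mathcal{A}}}(x,y,z)]$, which is the claim.

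The algebra is entirely linear, so no conceptual difficulty arises there; the one genuinely delicate point, and the step I would verify most carefully, is the Koszul sign bookkeeping. In particular, the final matching of the three residual commutators against (\ref{exprF}) requires reconciling associators such as $\widetilde{as_{\mathcal{A}}}(z,x,t)$ with $\widetilde{as_{\mathcal{A}}}(z,t,x)$ and $\widetilde{as_{\mathcal{A}}}(x,y,t)$ with $\widetilde{as_{\mathcal{A}}}(t,x,y)$, each transposition contributing a parity-dependent sign that must combine correctly with the prefactors $(-1)^{|x|(|y|+|z|)}$ and $(-1)^{|z|(|x|+|y|)}$. I would organize the computation around the cyclic permutation so that the three contributions stay manifestly parallel, letting the super-alternativity of $f$ perform the final collapse.
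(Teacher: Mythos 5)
Your argument is correct, and it is worth noting that the paper itself states Proposition (\ref{pop-utilise}) with no proof at all (it is merely flagged as the Hom-analogue of a classical identity), so there is nothing in the text to compare against; your proposal actually supplies the missing argument, and it does so using only machinery the paper has already established. I checked the three places where the sign bookkeeping could go wrong and each one works out. First, combining Lemma (\ref{propriet ass H-A}) with the right Hom-alternative swap does give $\widetilde{as_{\mathcal{A}}}(\alpha(t),\alpha(x),[y,z])=(-1)^{(|t|+|x|)(|y|+|z|)}\widetilde{as_{\mathcal{A}}}([y,z],\alpha(t),\alpha(x))$, since the exponent $|x|(|y|+|z|)+|t|(|y|+|z|)+|t||x|$ from the full reversal loses the extra $|t||x|$ against the transposition sign; hence $(\ref{cara f})$ collapses as you claim and, after substituting $(\ref{lemm})$, one gets the single-term identity
\begin{equation*}
\widetilde{as_{\mathcal{A}}}(\alpha(t),\alpha(x),[y,z])=[\alpha^{2}(t),\widetilde{as_{\mathcal{A}}}(x,y,z)]-(-1)^{|t|(|x|+|y|+|z|)}[\alpha^{2}(x),\widetilde{as_{\mathcal{A}}}(y,z,t)]-f(t,x,y,z).
\end{equation*}
Second, under the weighted cyclic sum the leading commutators do give $3[\alpha^{2}(t),\widetilde{as_{\mathcal{A}}}(x,y,z)]$ because $(-1)^{|x|(|y|+|z|)}\widetilde{as_{\mathcal{A}}}(y,z,x)=\widetilde{as_{\mathcal{A}}}(x,y,z)$ and likewise for the other cycle, and the Bruck--Kleinfeld terms give $-3f(t,x,y,z)$ because $f(t,y,z,x)=(-1)^{|x|(|y|+|z|)}f(t,x,y,z)$ and $f(t,z,x,y)=(-1)^{|z|(|x|+|y|)}f(t,x,y,z)$ by super-alternativity. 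Third, the residual commutators match $(\ref{exprF})$ term by term: for instance $-(-1)^{|x|(|y|+|z|)+|t|(|x|+|y|+|z|)}[\alpha^{2}(y),\widetilde{as_{\mathcal{A}}}(z,x,t)]=(-1)^{(|t|+|x|)(|y|+|z|)}[\alpha^{2}(y),\widetilde{as_{\mathcal{A}}}(z,t,x)]$ after the slot-$2,3$ swap, and the $[\alpha^{2}(z),-]$ term reduces similarly, so the three residuals sum to $F(t,x,y,z)-[\alpha^{2}(t),\widetilde{as_{\mathcal{A}}}(x,y,z)]$. The total is $2[\alpha^{2}(t),\widetilde{as_{\mathcal{A}}}(x,y,z)]+(F-3f)(t,x,y,z)$, and $F=3f$ finishes it. This is a clean and complete proof; the only presentational suggestion is to record the displayed single-term identity explicitly, since it is stronger than the proposition (which is just its symmetrization) and is independently useful.
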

Next we consider the relationship between the Hom-associator $(\ref{has})$ in a Hom-superalgebra $(\mathcal{A},\mu,\alpha)$ and
the Hom-super-Jacobian $(\ref{hjsuper})$ in its super-commutator Hom-superalgebra $\mathcal{A}^{-}=(\mathcal{A},[-,-],\alpha)$
$($Definition $(\ref{supercommutator}))$.
\begin{lem}\label{superja and ass} Let $(\mathcal{A},\mu,\alpha)$ be a Hom-superalgebra. Then
\begin{eqnarray*}
\widetilde{J}_{\mathcal{A}^{-}}(x,y,z)&=& \widetilde{as_{\mathcal{A}}}(x,y,z)+(-1)^{|x|(|y|+|z|)}\widetilde{as_{\mathcal{A}}}(y,z,x)\\
&&+(-1)^{|z|(|x|+|y|)}\widetilde{as_{\mathcal{A}}}(z,x,y)-(-1)^{|x||y|}\widetilde{as_{\mathcal{A}}}(y,x,z)\\
&&-(-1)^{|y||z|}\widetilde{as_{\mathcal{A}}}(x,z,y)-(-1)^{|x||y|+|z|(|x|+|y|)}\widetilde{as_{\mathcal{A}}}(z,y,x)
\end{eqnarray*}
for all $x,y,z$ in $\mathcal{H}(\mathcal{A})$.
\end{lem}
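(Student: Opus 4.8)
The plan is to prove this identity by a direct, purely formal expansion. The first thing to observe is that neither alternativity nor multiplicativity of $\alpha$ is needed here: the statement is an identity valid in an arbitrary Hom-superalgebra. Writing $\mu(a,b)$ simply as the juxtaposition $ab$, the super-commutator bracket is $[a,b]=ab-(-1)^{|a||b|}ba$, and by the definition of the Hom-super-Jacobian applied in $\mathcal{A}^{-}$ we have
$$\widetilde{J}_{\mathcal{A}^{-}}(x,y,z)=[[x,y],\alpha(z)]-[\alpha(x),[y,z]]-(-1)^{|y||z|}[[x,z],\alpha(y)],$$
where every bracket is the super-commutator bracket.

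First I would expand each of the three outer brackets. Using $[a,b]=ab-(-1)^{|a||b|}ba$ together with the parity rules $|[x,y]|=|x|+|y|$ and $|\alpha(z)|=|z|$, each of $[[x,y],\alpha(z)]$, $[\alpha(x),[y,z]]$ and $[[x,z],\alpha(y)]$ unfolds into four monomials. This produces twelve monomials in total, each of one of the two shapes $(pq)\alpha(r)$ or $\alpha(p)(qr)$, carrying a sign prescribed by the Koszul rule.

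Next I would recognize associators. Since $\widetilde{as_{\mathcal{A}}}(p,q,r)=(pq)\alpha(r)-\alpha(p)(qr)$, the twelve monomials group into six pairs, one monomial of shape $(pq)\alpha(r)$ and one of shape $-\alpha(p)(qr)$, each pair reassembling into a single Hom-associator. Matching the monomials coming from $[[x,y],\alpha(z)]$ against $\widetilde{as_{\mathcal{A}}}(x,y,z)$ and $-(-1)^{|x||y|}\widetilde{as_{\mathcal{A}}}(y,x,z)$, and proceeding likewise for the other two outer brackets, one reads off exactly the six associators $\widetilde{as_{\mathcal{A}}}(x,y,z)$, $\widetilde{as_{\mathcal{A}}}(y,z,x)$, $\widetilde{as_{\mathcal{A}}}(z,x,y)$, $\widetilde{as_{\mathcal{A}}}(y,x,z)$, $\widetilde{as_{\mathcal{A}}}(x,z,y)$ and $\widetilde{as_{\mathcal{A}}}(z,y,x)$ occurring on the right-hand side.

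The main obstacle is the sign bookkeeping. The delicate cases arise in $[\alpha(x),[y,z]]$ and in $(-1)^{|y||z|}[[x,z],\alpha(y)]$, where the sign picked up from commuting past a bracket of parity $|y|+|z|$ or $|x|+|z|$ must be combined with the internal commutator sign and, in the third term, with the prefactor $(-1)^{|y||z|}$. For each pair I would check that the two resulting exponents reduce, modulo $2$, to the common value demanded by the stated coefficient; for example the two $\alpha(y)(xz)$ contributions carry exponent $|y||z|+(|x|+|z|)|y|\equiv|x||y|\pmod 2$, which matches the coefficient $(-1)^{|x||y|}$ of $\widetilde{as_{\mathcal{A}}}(y,x,z)$. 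Once all six coefficients have been verified in this manner, collecting the reassembled pairs yields precisely the claimed expression, completing the proof.
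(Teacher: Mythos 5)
Your proposal is correct and follows essentially the same route as the paper's proof: expand the three outer super-commutator brackets of $\widetilde{J}_{\mathcal{A}^{-}}(x,y,z)$ into twelve signed monomials of the forms $(pq)\alpha(r)$ and $\alpha(p)(qr)$, then regroup them into the six Hom-associators with the stated Koszul signs. The only caveat is a loose phrase in your sample sign check (the exponent $|x||y|$ is the coefficient of the monomial $\alpha(y)(xz)$, while the associator $\widetilde{as_{\mathcal{A}}}(y,x,z)$ itself carries the coefficient $-(-1)^{|x||y|}$), but the computation behind it is right.
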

\begin{proof} Let $(\mathcal{A},\mu,\alpha)$ be a Hom-superalgebra. Then for all $x,y,z$ in $\mathcal{H}(\mathcal{A})$, we have\\
\begin{eqnarray*}&& \widetilde{J}_{\mathcal{A}^{-}}(x,y,z)\\
&&   \ \  =[[x,y],\alpha(z)]-[\alpha(x),[y,z]-(-1)^{|y||z|}[[x,z],\alpha(y)]\\
&&   \ \  =[xy-(-1)^{|x||y|}yx,\alpha(z)]-[\alpha(x),yz-(-1)^{|y||z|}zy]-(-1)^{|y||z|}[xz-(-1)^{|x||z|}zx,\alpha(y)]\\
&&   \ \  =\Big((xy)\alpha(z)-(-1)^{|z|(|x|+|y|)}\alpha(z)(xy)-(-1)^{|x||y|}(yx)\alpha(z)+(-1)^{|x||y|+|z|(|x|+|y|)}\alpha(z)(yx)\Big)\\
&&   \ \  -\Big(\alpha(x)(yz)-(-1)^{|x|(|y|+|z|)}(yz)\alpha(x)-(-1)^{|y||z|}\alpha(x)(zy)+(-1)^{|x|(|y|+|z|)+|y||z|}(zy)\alpha(x)\Big)\\
&&   \ \  -(-1)^{|y||z|}\Big((xz)\alpha(y)-(-1)^{|y|(|x|+|z|)}\alpha(y)(xz)-(-1)^{|x||z|}(zx)\alpha(y)+(-1)^{|x||z|+|y|(|x|+|z|)}\alpha(y)(zx)\Big)\\
&&   \ \  =\widetilde{as_{\mathcal{A}}}(x,y,z)+(-1)^{|x|(|y|+|z|)}\widetilde{as_{\mathcal{A}}}(y,z,x)+(-1)^{|z|(|x|+|y|)}\widetilde{as_{\mathcal{A}}}(z,x,y)\\
&&   \ \  -(-1)^{|x||y|}\widetilde{as_{\mathcal{A}}}(y,x,z)-(-1)^{|y||z|}\widetilde{as_{\mathcal{A}}}(x,z,y)-(-1)^{|x||y|+|z|(|x|+|y|)}\widetilde{as_{\mathcal{A}}}(z,y,x),
\end{eqnarray*}
which completes the proof.
\end{proof}
\begin{prop}\label{6-associ} In a Hom-alternative superalgebra $(\mathcal{A},\mu,\alpha)$, the identity
$$ \widetilde{J}_{\mathcal{A}^{-}}=6\widetilde{as_{\mathcal{A}}}$$
holds.
\end{prop}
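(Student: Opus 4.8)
The plan is to start from the expansion of the Hom-super-Jacobian furnished by Lemma~\ref{superja and ass}, which writes $\widetilde{J}_{\mathcal{A}^{-}}(x,y,z)$ as a signed sum of the six Hom-associators obtained by permuting $x,y,z$ through all orderings. The goal is to show that in a Hom-alternative superalgebra each of these six summands collapses to $\widetilde{as_{\mathcal{A}}}(x,y,z)$, so that their sum is exactly $6\,\widetilde{as_{\mathcal{A}}}(x,y,z)$.

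First I would record the reduction rules available from the hypotheses. From the left and right Hom-alternative super-identities $(\ref{sa1})$ and $(\ref{sa2})$ one has
\begin{equation*}
\widetilde{as_{\mathcal{A}}}(y,x,z)=-(-1)^{|x||y|}\widetilde{as_{\mathcal{A}}}(x,y,z),
\qquad
\widetilde{as_{\mathcal{A}}}(x,z,y)=-(-1)^{|y||z|}\widetilde{as_{\mathcal{A}}}(x,y,z),
\end{equation*}
and from Lemma~\ref{propriet ass H-A} one has the reversal
\begin{equation*}
\widetilde{as_{\mathcal{A}}}(z,y,x)=-(-1)^{|x||y|+|x||z|+|y||z|}\widetilde{as_{\mathcal{A}}}(x,y,z).
\end{equation*}
The two cyclic orderings are obtained by composing a left and a right swap: applying the rules above in succession gives $\widetilde{as_{\mathcal{A}}}(y,z,x)=(-1)^{|x||y|+|x||z|}\widetilde{as_{\mathcal{A}}}(x,y,z)$ and $\widetilde{as_{\mathcal{A}}}(z,x,y)=(-1)^{|x||z|+|y||z|}\widetilde{as_{\mathcal{A}}}(x,y,z)$.

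Substituting these six expressions into the formula of Lemma~\ref{superja and ass}, I would check term by term that the parity prefactor attached to each permuted associator cancels against the sign produced by its reduction, leaving $+\widetilde{as_{\mathcal{A}}}(x,y,z)$ in every case. Concretely, the identity term contributes $+1$; the $(y,x,z)$ and $(x,z,y)$ terms each acquire a factor $(-1)^{2|x||y|}$, respectively $(-1)^{2|y||z|}$, hence $+1$; the $(z,y,x)$ term has total exponent $2(|x||y|+|x||z|+|y||z|)\equiv 0$; and the two cyclic terms likewise produce even total exponents, since their prefactors $(-1)^{|x|(|y|+|z|)}$ and $(-1)^{|z|(|x|+|y|)}$ exactly match the reduction signs. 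Summing the six contributions yields $6\,\widetilde{as_{\mathcal{A}}}(x,y,z)$, which is the claim. The only real work is the bookkeeping of the Koszul signs, and the point that makes everything align is that each transposition or rotation produces precisely the sign needed to cancel its prefactor; in particular no appeal to the deeper Hom-Teichm\"{u}ller identity or the Hom-Bruck--Kleinfeld function is required for this proposition.
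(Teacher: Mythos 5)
Your proof is correct and follows essentially the same route as the paper: both start from the six-term expansion of $\widetilde{J}_{\mathcal{A}^{-}}$ in Lemma~\ref{superja and ass} and use the super-alternating property of the Hom-associator to reduce each signed summand to $+\widetilde{as_{\mathcal{A}}}(x,y,z)$. Your sign bookkeeping, including the derivation of the two cyclic cases by composing transpositions, matches the identities the paper states directly.
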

\begin{proof} Since the Hom-associator $\widetilde{as_{\mathcal{A}}}$ is super-alternating, with the notations in Lemma $(\ref{superja and ass})$, we have$:$
\begin{eqnarray*}
(-1)^{|x|(|y|+|z|)}\widetilde{as_{\mathcal{A}}}(y,z,x)&=&\widetilde{as_{\mathcal{A}}}(x,y,z),\\
(-1)^{|z|(|x|+|y|)}\widetilde{as_{\mathcal{A}}}(z,x,y)&=&\widetilde{as_{\mathcal{A}}}(x,y,z),\\
-(-1)^{|x||y|}\widetilde{as_{\mathcal{A}}}(y,x,z)&=&\widetilde{as_{\mathcal{A}}}(x,y,z),\\
-(-1)^{|y||z|}\widetilde{as_{\mathcal{A}}}(x,z,y)&=&\widetilde{as_{\mathcal{A}}}(x,y,z),\\
-(-1)^{|x||y|+|z|(|x|+|y|)}\widetilde{as_{\mathcal{A}}}(z,y,x)&=&\widetilde{as_{\mathcal{A}}}(x,y,z).
\end{eqnarray*}
Then the result follows from Lemma $(\ref{superja and ass})$.
\end{proof}
Now, we are ready to prove Theorem \ref{hAandMalcev admissible}.\\
\begin{proof}(Theorem (\ref{hAandMalcev admissible})) Let $(\mathcal{A},\mu,\alpha)$ be a Hom-alternative superalgebra and $\mathcal{A}^{-}=(\mathcal{A},[-,-],\alpha)$ be its super-commutator Hom-superalgebra. The super-commutator bracket $[-,-]$ is super skewsymmetric. Thus, it remains to show that the Hom-Malcev super-identity $(\ref{hom-M-super})$ holds in $\mathcal{A}^{-}$, that is
\begin{eqnarray*}
2[\alpha^{2}(t),\widetilde{J}_{\mathcal{A}^{-}}(x,y,z)]
&=&\widetilde{J}_{\mathcal{A}^{-}}(\alpha(t),\alpha(x),[y,z])
+(-1)^{|x|(|y|+|z|)}\widetilde{J}_{\mathcal{A}^{-}}(\alpha(t),\alpha(y),[z,x])\\
 &&+(-1)^{|z|(|x|+|y|)}\widetilde{J}_{\mathcal{A}^{-}}(\alpha(t),\alpha(z),[x,y]).
\end{eqnarray*}
To prove this, we compute
\begin{eqnarray*}
  2[\alpha^{2}(t),\widetilde{J}_{\mathcal{A}^{-}}(x,y,z)]
  &=& 2[\alpha^{2}(t),6\widetilde{as_{\mathcal{A}}}(x,y,z)]\hskip3.5cm (\text{by Proposition}~\ref{6-associ})  \\
   &=& 6\Big( \widetilde{as_{\mathcal{A}}}(\alpha(t),\alpha(x),[y,z])
   +(-1)^{|x|(|y|+|z|)} \widetilde{as_{\mathcal{A}}}(\alpha(t),\alpha(y),[z,x])\\
   &&+(-1)^{|z|(|x|+|y|)} \widetilde{as_{\mathcal{A}}}(\alpha(t),\alpha(z),[x,y])\Big) \hskip1cm (\text{by Proposition}~\ref{pop-utilise})\\
   &=& \widetilde{J}_{\mathcal{A}^{-}}(\alpha(t),\alpha(x),[y,z])
   +(-1)^{|x|(|y|+|z|)}\widetilde{J}_{\mathcal{A}^{-}}(\alpha(t),\alpha(y),[z,x])\\
   &&+(-1)^{|z|(|x|+|y|)}\widetilde{J}_{\mathcal{A}^{-}}(\alpha(t),\alpha(z),[x,y])\hskip1.3cm (\text{by Proposition}~\ref{6-associ}).
\end{eqnarray*}
\end{proof}
\begin{exa}\label{exp-B(4,2)} In this example, we describe a Hom-alternative superalgebra $($hence a Hom-Malcev-admissible superalgebra by Theorem
$(\ref{hAandMalcev admissible}))$ that is not Hom-Lie-admissible and not alternative. Let $\mathbb{F}$ be a field of characteristic $3$ and let $B(4,2)=B_{0}\oplus B_{1}$ be a $6$-dimensional simple alternative superalgebra (see \cite{shest-rep-alter,shest-Non-commutative}), where $B_{0}=M_{2}(\mathbb{F})$ and $B_{1}=\mathbb{F}\cdot m_{1} +\mathbb{F}\cdot m_{2}$ is the $2$-dimensional irreducible Cayley bi-module over $B_{0}$. Now $B(4,2)$ has basis
$\{e_{11},e_{12},e_{21},e_{22}, m_{1},m_{2}\}$, where the product, with respect to the basis, is given by the following table $:$
$$\begin{tabular}{|l|l|l|l|l|l|l|}
  \hline
  $\mu$ & $e_{11}$ & $e_{12}$ & $e_{21}$ & $e_{22}$ & $m_{1}$ & $m_{2}$ \\ \hline
  $e_{11}$ & $e_{11}$ & $e_{12}$ & $0$ & $0$ & $m_{1}$ & $0$ \\ \hline
  $e_{12}$ & $0$ & $0$ & $e_{11}$ & $e_{12}$ & $m_{2}$ & $0$ \\ \hline
  $e_{21}$ & $e_{21}$ & $e_{22}$ & $0$ & $0$ & $0$ & $m_{1}$ \\ \hline
  $e_{22}$ & $0$ & $0$ & $e_{21}$ & $e_{22}$ & $0$ & $m_{2}$ \\ \hline
  $m_{1}$ & $0$ & $-m_{2}$ & $0$ & $m_{1}$ & $-e_{21}$ & $e_{11}$ \\ \hline
  $m_{2}$ & $m_{2}$ & $0$ & $-m_{1}$ & $0$ & $-e_{22}$ & $e_{12}$ \\
  \hline
\end{tabular}$$
The even superalgebra endomorphisms $\alpha$ with respect to the basis $\{e_{11},e_{12},e_{21},e_{22}, m_{1},m_{2}\}$ are defined by the matrix
$$\alpha=\left(
  \begin{array}{cccccc}
    1 & 0 & \frac{-a}{b} & 0 & 0 & 0 \\
    a & b & -\frac{1}{b} & -a & 0 & 0 \\
    0 & 0 & \frac{1}{b}  & 0 & 0 & 0 \\
    0 & 0 & \frac{a}{b} & 1 & 0 & 0 \\
    0 & 0 & 0 & 0 & \pm\sqrt{\frac{1}{b}} & 0 \\
    0 & 0 & 0 & 0 & \pm a\sqrt{\frac{1}{b}} & \pm b\sqrt{\frac{1}{b}} \\
  \end{array}
\right)$$
where $a,b \neq 0$. According to Theorem $(\ref{induced A})$, the even linear map $\alpha$ an the following multiplication
\begin{eqnarray*}
&& \mu_{\alpha}(e_{11},e_{11})=e_{11}+ae_{12},~~\mu_{\alpha}(e_{11},e_{12})=\mu_{\alpha}(e_{11},e_{22})=\mu_{\alpha}(e_{11},m_{1})=0,
\\ && \mu_{\alpha}(e_{11},e_{21})=\frac{-a}{b}e_{11}-\frac{1}{b}e_{12}+\frac{1}{b}e_{21}+\frac{a}{b}e_{22},~~
\mu_{\alpha}(e_{11},m_{2})=\pm b\sqrt{\frac{1}{b}}m_{2},
\\ && \mu_{\alpha}(e_{12},e_{11})=be_{12},~~\mu_{\alpha}(e_{12},e_{12})=\mu_{\alpha}(e_{12},e_{22})=\mu_{\alpha}(e_{12},m_{2})=0,
\\ && \mu_{\alpha}(e_{12},e_{21})=-ae_{12}+\frac{1}{b}e_{21}+e_{22},~~
\mu_{\alpha}(e_{12},m_{1})=\pm b\sqrt{\frac{1}{b}}m_{2},
\\ && \mu_{\alpha}(e_{21},e_{12})=e_{11}+ae_{12},~~\mu_{\alpha}(e_{21},e_{11})=\mu_{\alpha}(e_{21},e_{21})=\mu_{\alpha}(e_{21},m_{1})=0,
\\ && \mu_{\alpha}(e_{21},e_{22})=\frac{-a}{b}e_{11}-\frac{1}{b}e_{12}+\frac{1}{b}e_{21}+\frac{a}{b}e_{22},~~
\mu_{\alpha}(e_{21},m_{2})=\pm \sqrt{\frac{1}{b}}m_{1}\pm a\sqrt{\frac{1}{b}}m_{2},
\\ &&  \mu_{\alpha}(e_{22},e_{11})=\mu_{\alpha}(e_{22},e_{21})=\mu_{\alpha}(e_{22},m_{2})=0,
\\ && \mu_{\alpha}(e_{22},e_{12})=b e_{12},~~\mu_{\alpha}(e_{22},e_{22})=-a e_{12}+e_{22},~~
\mu_{\alpha}(e_{22},m_{1})=\pm \sqrt{\frac{1}{b}}m_{1}\pm a\sqrt{\frac{1}{b}}m_{2},
\\ && \mu_{\alpha}(m_{1},e_{11})=\pm \sqrt{\frac{1}{b}}m_{1}\pm a\sqrt{\frac{1}{b}}m_{2}, ~~\mu_{\alpha}(m_{1},e_{11})=\mu_{\alpha}(m_{1},e_{21})=0,
\\ && \mu_{\alpha}(m_{1},e_{12})=\pm b\sqrt{\frac{1}{b}}m_{2},~~\mu_{\alpha}(m_{1},m_{1})=\frac{a}{b}e_{11}+\frac{1}{b}e_{12}-\frac{1}{b}e_{21}-\frac{a}{b}e_{22},~~
\mu_{\alpha}(m_{1},m_{2})=ae_{12}-e_{22},
\\ && \mu_{\alpha}(m_{2},e_{21})=\pm \sqrt{\frac{1}{b}}m_{1}\pm a\sqrt{\frac{1}{b}}m_{2}, ~~\mu_{\alpha}(m_{2},e_{11})=\mu_{\alpha}(m_{2},e_{21})=0,
\\ && \mu_{\alpha}(m_{2},e_{22})=\pm b\sqrt{\frac{1}{b}}m_{2},~~\mu_{\alpha}(m_{2},m_{1})=e_{11}+ae_{12},~~
\mu_{\alpha}(m_{2},m_{2})=be_{12},
\end{eqnarray*}
where $a,b \neq 0$. determine a $6$-dimensional Hom-alternative $($hence Hom-Malcev-admissible and Hom-flexible$)$ superalgebra.\\
Note that $B(4,2)_{\alpha}$ is not alternative because
\begin{eqnarray*}
  as_{B(4,2)_{\alpha}}(e_{11},e_{21},e_{22})+as_{B(4,2)_{\alpha}}(e_{21},e_{11},e_{22}) &=& \frac{a}{b}(e_{11}+e_{22}) \\
   &\neq& 0.
   \end{eqnarray*}
Also, $B(4,2)_{\alpha}$ is not Hom-Lie-admissible, that is  $B(4,2)_{\alpha}^{-}$ is not Hom-Lie superalgebra. Indeed, we have
\begin{eqnarray*}
  \widetilde{J}_{B(4,2)_{\alpha}^{-}}(e_{11},e_{21},e_{22}) &=& 6 as_{B(4,2)_{\alpha}}(e_{11},e_{21},e_{22})\\
   &=& 6\frac{a}{b}(e_{11}+e_{22}) \\
   &\neq&  0.
\end{eqnarray*}
Therefore, $B(4,2)_{\alpha}$ is a Hom-alternative $($and hence Hom-Malcev-admissible and Hom-flexible$)$ superalgebra that is neither alternative nor
Hom-Lie-admissible.\\
Also, $(B(4,2),[-,-]_{\alpha})$ is not a Malcev superalgebra. Indeed, on the one hand we have
\begin{eqnarray*}
  [\mathcal{J}(e_{11},e_{12},e_{21}),e_{22}]_{\alpha}+[\mathcal{J}(e_{22},e_{12},e_{21}),e_{11}]_{\alpha} &=&
   (\frac{2a}{b}+1+\frac{1}{b^{2}})e_{11}+(4ab^{2}-2a+\frac{3}{b}+\frac{1}{b^{2}})e_{12} \\
   &+& (\frac{1}{b^{2}}+1-\frac{2}{b^{2}})e_{21}+(1-\frac{2a}{b}+\frac{1}{b^{2}})e_{22},
\end{eqnarray*}
on the other hand, we have
\begin{eqnarray*}
  \mathcal{J}(e_{11},e_{12},[e_{22},e_{21}]_{\alpha})+\mathcal{J}(e_{22},e_{12},[e_{11},e_{21}]_{\alpha}) &=&
  (-\frac{2a}{b^{3}}-2+\frac{2}{b^{2}})e_{11}+(2+\frac{2a}{b^{3}}-\frac{2}{b^{2}})e_{22}\\
   &+& \frac{2}{b^{2}}e_{21}+(2b^{2}+a+\frac{4a}{b^{2}}-\frac{2}{b^{3}}+2ab+\frac{5a}{b})e_{12},
\end{eqnarray*}
where $a,b\neq 0$. Then
$$[\mathcal{J}(e_{11},e_{12},e_{21}),e_{22}]_{\alpha}+[\mathcal{J}(e_{22},e_{12},e_{21}),e_{11}]_{\alpha}\neq
\mathcal{J}(e_{11},e_{12},[e_{22},e_{21}]_{\alpha})+\mathcal{J}(e_{22},e_{12},[e_{11},e_{21}]_{\alpha}).$$
So, $(B(4,2),[-,-]_{\alpha})$ does not satisfy the Malcev super-identity.
\end{exa}
\section{Hom-Malcev-Admissible Superalgebras}
In Theorem $(\ref{hAandMalcev admissible})$ we showed that every Hom-alternative superalgebra is Hom-Malcev-admissible. The purpose of this section is to introduce and study the class of Hom-flexible, Hom-Malcev-admissible superalgebras. We give several characterizations of Hom-flexible superalgebra that are Hom-Malcev-admissible in terms of the cyclic Hom-associator $($Proposition $(\ref{HMALCEV AND FLEX}))$. Then we prove the analogue of the construction results, Theorem
$(\ref{induced A})$ and Theorem $(\ref{derived A})$. We consider examples of Hom-flexible, Hom-Malcev-admissible superalgebras that are neither Hom-alternative nor Hom-Lie-admissible.\\
To state our characterizations of Hom-flexible superalgebras that are Hom-Malcev-admissible, we need the following definitions:
\begin{df}\label{defin-flexible} A \textsf{ Hom-flexible superalgebra} is a triple $(\mathcal{A},\mu,\alpha)$ consisting of $\mathbb{Z}_{2}-$graded vector space $\mathcal{A}$, an even bilinear map $\mu:\mathcal{A}\times\mathcal{A}\longrightarrow \mathcal{A}$ and an even homomorphism\\
$\alpha:\mathcal{A}\longrightarrow \mathcal{A}$ satisfying the \textsf{ Hom-flexible super-identity}, that is for any $x,y,z \in \mathcal{H}(\mathcal{A})$,
\begin{equation}\label{hom-supe-flexi}
   \widetilde{as_{\mathcal{A}}}(x,y,z)+(-1)^{|x||y|+|x||z|+|y||z|}\widetilde{as_{\mathcal{A}}}(z,y,x)=0.
\end{equation}
\end{df}
It follows from the definition and by Lemma $(\ref{propriet ass H-A})$ that Hom-alternative superalgebra is  Hom-flexible superalgebra. Also, when $\alpha=Id$ in Definition $(\ref{defin-flexible})$, we recover the usual notion of flexible superalgebra.
\begin{df}  Let $(\mathcal{A},\mu,\alpha)$ be any Hom-superalgebra. Define the \textsf{cyclic Hom-associator}
$ S_{\mathcal{A}}:\mathcal{A}\times\mathcal{A}\times\mathcal{A}\longrightarrow\mathcal{A}$ as the even multi-linear map
\begin{eqnarray*}
  S_{\mathcal{A}}(x,y,z) &=&\widetilde{as_{\mathcal{A}}}(x,y,z)+(-1)^{|x|(|y|+|z|)}\widetilde{as_{\mathcal{A}}}(y,z,x)
   +(-1)^{|z|(|x|+|y|)}\widetilde{as_{\mathcal{A}}}(z,x,y),
\end{eqnarray*}
for all $x,y,z \in \mathcal{H}(\mathcal{A})$, where $\widetilde{as_{\mathcal{A}}}$ is the Hom-associator $(\ref{ass})$.
\end{df}
We will use the following preliminary observations about the relationship between the cyclic Hom-associator and the Hom-super-Jacobian $(\ref{Hom-super-Jac})$ of the super-commutator Hom-superalgebra $($Definition $(\ref{supercommutator})$)$:$
\begin{lem} Let $(\mathcal{A},\mu,\alpha)$ be a Hom-flexible superalgebra. Then we have
\begin{equation}\label{superS and SJ}
    \widetilde{J}_{\mathcal{A}^{-}}=2 S_{\mathcal{A}},
\end{equation}
where $\mathcal{A}^{-}=(\mathcal{A},[-,-],\alpha)$ is the super-commutator Hom-superalgebra.
\end{lem}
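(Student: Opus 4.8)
The plan is to deduce this identity directly from Lemma~\ref{superja and ass}, which already expresses $\widetilde{J}_{\mathcal{A}^{-}}(x,y,z)$ as an alternating sum of six Hom-associator terms and which holds in \emph{any} Hom-superalgebra (no flexibility used yet). The first three of those six terms,
$$\widetilde{as_{\mathcal{A}}}(x,y,z)+(-1)^{|x|(|y|+|z|)}\widetilde{as_{\mathcal{A}}}(y,z,x)+(-1)^{|z|(|x|+|y|)}\widetilde{as_{\mathcal{A}}}(z,x,y),$$
are by definition exactly $S_{\mathcal{A}}(x,y,z)$, so one copy of $S_{\mathcal{A}}$ is obtained for free. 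The whole content of the lemma is therefore to show that the remaining three terms
$$-(-1)^{|x||y|}\widetilde{as_{\mathcal{A}}}(y,x,z)-(-1)^{|y||z|}\widetilde{as_{\mathcal{A}}}(x,z,y)-(-1)^{|x||y|+|z|(|x|+|y|)}\widetilde{as_{\mathcal{A}}}(z,y,x)$$
also sum to $S_{\mathcal{A}}(x,y,z)$; adding the two halves then yields $\widetilde{J}_{\mathcal{A}^{-}}=2S_{\mathcal{A}}$.

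To rewrite those three terms I would invoke the Hom-flexible super-identity \eqref{hom-supe-flexi} in the reversal form $\widetilde{as_{\mathcal{A}}}(a,b,c)=-(-1)^{|a||b|+|a||c|+|b||c|}\widetilde{as_{\mathcal{A}}}(c,b,a)$, applied once to each associator. Taking $(a,b,c)=(y,x,z)$ turns $-(-1)^{|x||y|}\widetilde{as_{\mathcal{A}}}(y,x,z)$ into $(-1)^{|z|(|x|+|y|)}\widetilde{as_{\mathcal{A}}}(z,x,y)$, the third $S$-summand; taking $(a,b,c)=(x,z,y)$ turns $-(-1)^{|y||z|}\widetilde{as_{\mathcal{A}}}(x,z,y)$ into $(-1)^{|x|(|y|+|z|)}\widetilde{as_{\mathcal{A}}}(y,z,x)$, the second $S$-summand; and taking $(a,b,c)=(z,y,x)$ turns the last term into $\widetilde{as_{\mathcal{A}}}(x,y,z)$, the first $S$-summand. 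In each case the two minus signs (the explicit one from Lemma~\ref{superja and ass} and the one from flexibility) combine, and the Koszul sign coming from flexibility together with the preexisting prefactor always has an exponent that is even modulo $2$, so it collapses to $+1$.

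The only real work is the sign bookkeeping, so the step I expect to need the most care is verifying that each of these three exponents reduces correctly. Concretely, for the first term the total exponent is $|x||y|+\bigl(|y||x|+|y||z|+|x||z|\bigr)=2|x||y|+|z|(|x|+|y|)$; for the second it is $|y||z|+\bigl(|x||z|+|x||y|+|z||y|\bigr)=2|y||z|+|x|(|y|+|z|)$; and for the third it is $|x||y|+|z|(|x|+|y|)+\bigl(|z||y|+|z||x|+|y||x|\bigr)=2\bigl(|x||y|+|z||x|+|z||y|\bigr)$. Since in each case the unwanted part is an even multiple of a product of parities, $(-1)$ to that even power is $1$ and only the intended cyclic sign survives. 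Summing the three rewritten terms reproduces $S_{\mathcal{A}}(x,y,z)$ exactly, which combined with the first half gives $\widetilde{J}_{\mathcal{A}^{-}}=2S_{\mathcal{A}}$, as claimed.
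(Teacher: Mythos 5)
Your proof is correct and follows essentially the same route as the paper: both start from Lemma~\ref{superja and ass}, identify the first three terms as $S_{\mathcal{A}}$, and use the Hom-flexible super-identity to convert the remaining three terms into a second copy of $S_{\mathcal{A}}$. The paper merely asserts this last step "by Hom-super flexibility," whereas you carry out the sign bookkeeping explicitly, and your exponent computations all check out.
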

\begin{proof} Assume that $x,y,z \in \mathcal{H}(\mathcal{A})$, by Lemma $(\ref{superja and ass})$ we have
\begin{eqnarray*}
\widetilde{J}_{\mathcal{A}^{-}}(x,y,z)&=& \widetilde{as_{\mathcal{A}}}(x,y,z)+(-1)^{|x|(|y|+|z|)}\widetilde{as_{\mathcal{A}}}(y,z,x)\\
&+&(-1)^{|z|(|x|+|y|)}\widetilde{as_{\mathcal{A}}}(z,x,y)-(-1)^{|x||y|}\widetilde{as_{\mathcal{A}}}(y,x,z)\\
&-&(-1)^{|y||z|}\widetilde{as_{\mathcal{A}}}(x,z,y)-(-1)^{|x||y|+|z|(|x|+|y|)}\widetilde{as_{\mathcal{A}}}(z,y,x)\\
&=&2 S_{\mathcal{A}} \hskip1cm (\text{by Hom-super flexibility}).
\end{eqnarray*}
\end{proof}
The following result gives characterizations of Hom-Malcev-admissible superalgebras in terms of cyclic Hom-associator, assuming Hom-flexibility:
\begin{prop}\label{HMALCEV AND FLEX} Let $(\mathcal{A},\mu,\alpha)$ be a Hom-flexible superalgebra and $\mathcal{A}^{-}=(\mathcal{A},[-,-],\alpha)$ be the super-commutator Hom-superalgebra. Then the following statements are equivalent$:$
\begin{enumerate}
\item  $\mathcal{A}$ is Hom-Malcev-admissible superalgebra.
\item The equality
\begin{eqnarray*}
&&\widetilde{J}_{\mathcal{A}^{-}}(\alpha(x),\alpha(y),[t,z])
+(-1)^{|x||y|+|t|(|x|+|y|)}\widetilde{J}_{\mathcal{A}^{-}}(\alpha(t),\alpha(y),[x,z])\nonumber\\
&=&(-1)^{|t||z|}[\widetilde{J}_{\mathcal{A}^{-}}(x,y,z),\alpha^{2}(t)]
 +(-1)^{|x|(|y|+|z|+|t|)+|t||y|}[\widetilde{J}_{\mathcal{A}^{-}}(t,y,z),\alpha^{2}(x)]
\end{eqnarray*}
holds for all $x,y,z,t \in \mathcal{H}(\mathcal{A})$.
\item The equality
\begin{eqnarray*}
&& S_{\mathcal{A}}(\alpha(x),\alpha(y),[t,z])+(-1)^{|x||y|+|t|(|x|+|y|)}S_{\mathcal{A}}(\alpha(t),\alpha(y),[x,z])\nonumber\\
&=&(-1)^{|t||z|}[S_{\mathcal{A}}(x,y,z),\alpha^{2}(t)]+(-1)^{|x|(|y|+|z|+|t|)+|t||y|}[S_{\mathcal{A}}(t,y,z),\alpha^{2}(x)]
\end{eqnarray*}
holds for all $x,y,z,t \in \mathcal{H}(\mathcal{A})$.
\end{enumerate}
\end{prop}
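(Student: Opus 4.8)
The plan is to obtain both equivalences with essentially no direct bracket computation, by leaning on two results already established: Proposition $(\ref{carac hom-malcev})$, applied not to $\mathcal{A}$ itself but to its super-commutator Hom-superalgebra $\mathcal{A}^{-}=(\mathcal{A},[-,-],\alpha)$, and the flexibility identity $\widetilde{J}_{\mathcal{A}^{-}}=2S_{\mathcal{A}}$ of $(\ref{superS and SJ})$. The strategy is thus to prove $(1)\Leftrightarrow(2)$ by reading off Proposition $(\ref{carac hom-malcev})$ for $\mathcal{A}^{-}$, and then $(2)\Leftrightarrow(3)$ by substituting $\widetilde{J}_{\mathcal{A}^{-}}=2S_{\mathcal{A}}$.

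First I would check that $\mathcal{A}^{-}$ meets the hypotheses of Proposition $(\ref{carac hom-malcev})$, namely that it is a multiplicative Hom-superalgebra whose bracket is super skewsymmetric. Super skewsymmetry of $[x,y]=\mu(x,y)-(-1)^{|x||y|}\mu(y,x)$ is immediate from Definition $(\ref{supercommutator})$. For multiplicativity one computes, using the multiplicativity of $\alpha$ with respect to $\mu$, that $\alpha([x,y])=\mu(\alpha(x),\alpha(y))-(-1)^{|x||y|}\mu(\alpha(y),\alpha(x))=[\alpha(x),\alpha(y)]$, so that $\alpha\circ[-,-]=[-,-]\circ\alpha^{\otimes 2}$.

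With this in place the equivalence $(1)\Leftrightarrow(2)$ is a direct reading of Proposition $(\ref{carac hom-malcev})$. By Definition $(\ref{supercommutator})$, $\mathcal{A}$ is Hom-Malcev-admissible precisely when $\mathcal{A}^{-}$ is a Hom-Malcev superalgebra, i.e. when the Hom-Malcev super-identity $(\ref{hom-M-super})$ holds in $\mathcal{A}^{-}$; this is statement $(1)$ of Proposition $(\ref{carac hom-malcev})$ for $\mathcal{A}^{-}$. Moreover, the displayed equation in statement $(2)$ of the present proposition is exactly equation $(\ref{hom-M-super2})$ written for $\mathcal{A}^{-}$, the relevant Hom-super-Jacobian there being $\widetilde{J}_{\mathcal{A}^{-}}$. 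Hence $(1)\Leftrightarrow(2)$ is nothing but the equivalence $(1)\Leftrightarrow(2)$ of Proposition $(\ref{carac hom-malcev})$ specialized to $\mathcal{A}^{-}$.

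For $(2)\Leftrightarrow(3)$ I would substitute the identity $\widetilde{J}_{\mathcal{A}^{-}}=2S_{\mathcal{A}}$ from $(\ref{superS and SJ})$, which is available since $\mathcal{A}$ is Hom-flexible. Every term of the equation in statement $(2)$ carries exactly one factor $\widetilde{J}_{\mathcal{A}^{-}}$, applied to arguments that match, position by position, those appearing in statement $(3)$; replacing each such factor by $2S_{\mathcal{A}}$ therefore multiplies both sides by the single constant $2$. Since $\mathbb{K}$ has characteristic $0$, dividing by $2$ turns statement $(2)$ into statement $(3)$ and conversely. I expect the only points requiring care to be the verification that $\mathcal{A}^{-}$ is multiplicative and the bookkeeping that the Koszul signs in $(2)$ and $(3)$ agree term-by-term, which they do automatically because the substitution $\widetilde{J}_{\mathcal{A}^{-}}\mapsto 2S_{\mathcal{A}}$ leaves the sign coefficients and the arguments untouched.
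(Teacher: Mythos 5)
Your proof is correct. The paper states this proposition without giving a proof, and your argument is exactly the one its surrounding results are set up for: the equivalence $(1)\Leftrightarrow(2)$ is Proposition $(\ref{carac hom-malcev})$ read off for the super-commutator Hom-superalgebra $\mathcal{A}^{-}$ (whose super skewsymmetry and multiplicativity you rightly verify), and $(2)\Leftrightarrow(3)$ follows by substituting $\widetilde{J}_{\mathcal{A}^{-}}=2S_{\mathcal{A}}$ from $(\ref{superS and SJ})$ into every term and cancelling the overall factor $2$, which is legitimate in characteristic $0$.
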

The following construction results for Hom-flexible and Hom-Malcev-admissible superalgebras are the analogues of Theorems $(\ref{induced A})$ and
$(\ref{derived A})$.
\begin{thm}\label{inducedMALCEV-ADMISSI} Let $(\mathcal{A},\mu,\alpha)$ be a Hom-Malcev-admissible superalgebra and $\beta: \mathcal{A} \longrightarrow \mathcal{A}$ be an even Malcev-admissible superalgebra endomorphism. Then $\mathcal{A}_{\beta}=(\mathcal{A},\mu_{\beta}=\beta \circ \mu,\beta\alpha)$ is a Hom-Malcev-admissible superalgebra.
\end{thm}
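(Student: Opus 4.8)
The plan is to reduce this statement to Theorem~(\ref{inducedMALCEV}), the analogous construction result for genuine Hom-Malcev superalgebras, by observing that forming the super-commutator Hom-superalgebra (Definition~(\ref{supercommutator})) commutes with the $\beta$-twisting operation. Concretely, I would establish the bookkeeping identity
$$(\mathcal{A}_{\beta})^{-}=(\mathcal{A}^{-})_{\beta},$$
where on the left one first twists $\mu$ by $\beta$ to obtain $\mu_{\beta}=\beta\circ\mu$ and then passes to the super-commutator, while on the right one first passes to the super-commutator bracket $[-,-]$ of $\mu$ and then twists that bracket by $\beta$ exactly as in Theorem~(\ref{inducedMALCEV}). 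Once this identity is in hand, no Hom-Malcev super-identity calculation needs to be redone, since all of it is packaged inside Theorem~(\ref{inducedMALCEV}).

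To prove the identity, I would compute the super-commutator bracket of $\mu_{\beta}$ directly, using that $\beta$ is even and linear:
$$\mu_{\beta}(x,y)-(-1)^{|x||y|}\mu_{\beta}(y,x)=\beta\big(\mu(x,y)-(-1)^{|x||y|}\mu(y,x)\big)=\beta([x,y]).$$
Hence the super-commutator Hom-superalgebra of $\mathcal{A}_{\beta}$ is $(\mathcal{A}_{\beta})^{-}=(\mathcal{A},\beta\circ[-,-],\beta\alpha)$, which is precisely the induced Hom-superalgebra $(\mathcal{A}^{-})_{\beta}$ obtained by applying the construction of Theorem~(\ref{inducedMALCEV}) to $\mathcal{A}^{-}=(\mathcal{A},[-,-],\alpha)$ with the twisting endomorphism $\beta$.

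It then remains to check that the hypotheses of Theorem~(\ref{inducedMALCEV}) are met. Since $\mathcal{A}$ is assumed Hom-Malcev-admissible, $\mathcal{A}^{-}$ is by definition a Hom-Malcev superalgebra, supplying the first input. For the second input I must verify that $\beta$ is an even Malcev superalgebra endomorphism of $\mathcal{A}^{-}$, that is $\beta\circ[-,-]=[-,-]\circ\beta^{\otimes2}$; this follows at once from the assumption that $\beta$ is a superalgebra endomorphism of $(\mathcal{A},\mu)$, because $\beta[x,y]=\beta(\mu(x,y))-(-1)^{|x||y|}\beta(\mu(y,x))=\mu(\beta x,\beta y)-(-1)^{|x||y|}\mu(\beta y,\beta x)=[\beta x,\beta y]$. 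Applying Theorem~(\ref{inducedMALCEV}) to the Hom-Malcev superalgebra $\mathcal{A}^{-}$ and the bracket-endomorphism $\beta$ yields that $(\mathcal{A}^{-})_{\beta}$ is a Hom-Malcev superalgebra, and by the first paragraph this is $(\mathcal{A}_{\beta})^{-}$; therefore $\mathcal{A}_{\beta}$ is Hom-Malcev-admissible.

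I do not expect a genuine obstacle here: the entire argument is formal once the identity $(\mathcal{A}_{\beta})^{-}=(\mathcal{A}^{-})_{\beta}$ is recognized. The only place where the hypothesis on $\beta$ is really used — rather than mere linearity — is the compatibility $\beta\circ[-,-]=[-,-]\circ\beta^{\otimes2}$, which is where multiplicativity of $\beta$ with respect to $\mu$ enters; everything else is a transcription of Theorem~(\ref{inducedMALCEV}) through the super-commutator functor.
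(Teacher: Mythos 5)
Your proposal is correct and matches the paper's proof in substance: the paper likewise begins by computing $[x,y]_{\beta}=\beta\circ[x,y]$, i.e., that the super-commutator of $\mu_{\beta}$ is the $\beta$-twist of the super-commutator bracket of $\mu$, so that $(\mathcal{A}_{\beta})^{-}=(\mathcal{A}^{-})_{\beta}$. The only difference is that the paper then writes out the verification of the Hom-Malcev super-identity for $(\mathcal{A}_{\beta})^{-}$ in full (repeating, essentially verbatim, the computation from Theorem~(\ref{inducedMALCEV})), whereas you delegate that identical computation to Theorem~(\ref{inducedMALCEV}); your citation is legitimate because the only property of $\beta$ that proof uses is its multiplicativity with respect to $[-,-]$, which you correctly derive from its multiplicativity with respect to $\mu$.
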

\begin{proof} The super-commutator Hom-superalgebra of $(\mathcal{A},\mu,\alpha)$ is $\mathcal{A}^{-}=(\mathcal{A},[-,-],\alpha)$,
where $[x,y]=\mu(x,y)-(-1)^{|x||y|}\mu(y,x)$ which is a Hom-Malcev superalgebra by assumption. In particular, the Malcev super-identity
\begin{eqnarray}\label{malsev-super-iden}
 &&\mathcal{J}_{\mathcal{A}}(x,y,[t,z])+(-1)^{|x||y|+|t|(|x|+|y|)}\mathcal{J}_{\mathcal{A}}(t,y,[x,z])\nonumber\\
&=&(-1)^{|t||z|}[\mathcal{J}_{\mathcal{A}}(x,y,z),t]
+(-1)^{|x|(|y|+|z|+|t|)+|t||y|}[\mathcal{J}_{\mathcal{A}}(t,y,z),x]
\end{eqnarray}
holds. The super-commutator Hom-superalgebra of the induced Hom-superalgebra $\mathcal{A}_{\beta}$ is $\mathcal{A}_{\beta}^{-}=(\mathcal{A},[-,-]_{\beta},\beta\alpha)$, where
\begin{eqnarray}\label{croch indu}
[x,y]_{\beta} &=& \mu_{\beta}(x,y)-(-1)^{|x||y|}\mu_{\beta}(y,x)\nonumber\\
&=&\beta \circ [x,y]
\end{eqnarray}
which is super skewsymmetric. We must show that $\mathcal{A}_{\beta}^{-}$ satisfies the Hom-Malcev super-identity. We have
\begin{eqnarray*}
  \widetilde{J}_{\mathcal{A}_{\beta}^{-}}(x,y,z)
  &=&[[x,y]_{\beta},\beta\alpha(z)]_{\beta}-[\beta\alpha(x),[y,z]_{\beta}]_{\beta}
  -(-1)^{|y||z|}[[x,z]_{\beta},\beta\alpha(y)]_{\beta}\\
  &=& \beta^{2}\circ \widetilde{J}_{\mathcal{A}^{-}}(x,y,z).
\end{eqnarray*}
Therefore, we have:
\begin{eqnarray*}
&& \widetilde{J}_{\mathcal{A}_{\beta}^{-}}(\beta\alpha(x),\beta\alpha(y),[t,z]_{\beta})
 +(-1)^{|x||y|+|t|(|x|+|y|)}\widetilde{J}_{\mathcal{A}_{\beta}^{-}}(\beta\alpha(t),\beta\alpha(y),[x,z]_{\beta})\\
 &&= \beta^{2}\circ \Big(\widetilde{J}_{\mathcal{A}^{-}}(\beta\alpha(x),\beta\alpha(y),\beta([t,z]))
+(-1)^{|x||y|+|t|(|x|+|y|)}\widetilde{J}_{\mathcal{A}^{-}}(\beta\alpha(t),\beta\alpha(y),\beta([x,z]))\Big)\\
 &&= \beta^{3}\circ \Big(\widetilde{J}_{\mathcal{A}^{-}}(\alpha(x),\alpha(y),[t,z])
 +(-1)^{|x||y|+|t|(|x|+|y|)}\widetilde{J}_{\mathcal{A}^{-}}(\alpha(t),\alpha(y),[x,z])\Big)\\
 &&=\beta^{3}\circ \Big((-1)^{|t||z|}[\widetilde{J}_{\mathcal{A}^{-}}(x,y,z),\alpha^{2}(t)]
 +(-1)^{|x|(|y|+|z|+|t|)+|t||y|}[\widetilde{J}_{\mathcal{A}^{-}}(t,y,z),\alpha^{2}(x)]\Big)\\
 &&= (-1)^{|t||z|}[\beta^{2}\circ\widetilde{J}_{\mathcal{A}^{-}}(x,y,z),(\beta\alpha)^{2}(t)]_{\beta}
 +(-1)^{|x|(|y|+|z|+|t|)+|t||y|}[\beta^{2}\circ\widetilde{J}_{\mathcal{A}^{-}}(t,y,z),(\beta\alpha)^{2}(x)]_{\beta}\\
 &&= (-1)^{|t||z|}[\widetilde{J}_{\mathcal{A}_{\beta}^{-}}(x,y,z),(\beta\alpha)^{2}(t)]_{\beta}
 +(-1)^{|x|(|y|+|z|+|t|)+|t||y|}[\widetilde{J}_{\mathcal{A}_{\beta}^{-}}(t,y,z),(\beta\alpha)^{2}(x)]_{\beta}.
\end{eqnarray*}
This shows that the Hom-Malcev super-identity holds in $\mathcal{A}_{\beta}^{-}$.
\end{proof}
\begin{thm}\label{derivMALCEV-ADM} Let $(\mathcal{A},\mu,\alpha)$ be a Hom-Malcev-admissible superalgebra. Then the derived Hom-superalgebra $\mathcal{A}^{n}=(\mathcal{A},\mu^{(n)}=\alpha^{2^{n}-1} \circ \mu,\alpha^{2^{n}})$ is also a Hom-Malcev-admissible superalgebra for each $n \geq 0$.
\end{thm}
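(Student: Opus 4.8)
The plan is to reduce the statement to Proposition $(\ref{derivMALCEV})$ by showing that forming the super-commutator Hom-superalgebra commutes with passing to the $n$th derived Hom-superalgebra. Concretely, I will establish the identity $(\mathcal{A}^{n})^{-}=(\mathcal{A}^{-})^{n}$ of Hom-superalgebras. Once this is in hand the result is immediate: by hypothesis $\mathcal{A}^{-}$ is a Hom-Malcev superalgebra, Proposition $(\ref{derivMALCEV})$ tells us its $n$th derived algebra $(\mathcal{A}^{-})^{n}$ is again Hom-Malcev, and the identity then says that the super-commutator Hom-superalgebra of $\mathcal{A}^{n}$ is Hom-Malcev, i.e. $\mathcal{A}^{n}$ is Hom-Malcev-admissible.

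First I would compute the super-commutator bracket (Definition $(\ref{supercommutator})$) of the derived multiplication $\mu^{(n)}=\alpha^{2^{n}-1}\circ\mu$. For homogeneous $x,y$, using that $\alpha$ is an even linear map,
\begin{eqnarray*}
[x,y]^{(n)}&=&\mu^{(n)}(x,y)-(-1)^{|x||y|}\mu^{(n)}(y,x)\\
&=&\alpha^{2^{n}-1}\big(\mu(x,y)\big)-(-1)^{|x||y|}\alpha^{2^{n}-1}\big(\mu(y,x)\big)\\
&=&\alpha^{2^{n}-1}\big([x,y]\big),
\end{eqnarray*}
where $[-,-]$ is the super-commutator bracket of $\mu$. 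Hence the super-commutator Hom-superalgebra of $\mathcal{A}^{n}$ is $(\mathcal{A}^{n})^{-}=(\mathcal{A},\alpha^{2^{n}-1}\circ[-,-],\alpha^{2^{n}})$, which is precisely the $n$th derived Hom-superalgebra $(\mathcal{A}^{-})^{n}$ of $\mathcal{A}^{-}=(\mathcal{A},[-,-],\alpha)$.

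Next I would record that $\mathcal{A}^{-}$ is multiplicative, as required for Proposition $(\ref{derivMALCEV})$. This follows from the multiplicativity of $\alpha$ with respect to $\mu$: for homogeneous $x,y$ one has $\alpha([x,y])=\alpha(\mu(x,y))-(-1)^{|x||y|}\alpha(\mu(y,x))=\mu(\alpha(x),\alpha(y))-(-1)^{|x||y|}\mu(\alpha(y),\alpha(x))=[\alpha(x),\alpha(y)]$. Since $\mathcal{A}$ is Hom-Malcev-admissible, $\mathcal{A}^{-}$ is therefore a multiplicative Hom-Malcev superalgebra, so Proposition $(\ref{derivMALCEV})$ applies and yields that $(\mathcal{A}^{-})^{n}$ is a Hom-Malcev superalgebra for every $n\geq 0$. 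Combining this with the identity $(\mathcal{A}^{n})^{-}=(\mathcal{A}^{-})^{n}$ established above completes the proof.

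There is no serious obstacle here: the only point requiring care is the graded sign bookkeeping in the super-commutator computation, which is harmless since $\alpha$ is even and linear and thus passes through the bracket without producing new signs. The entire content of the argument is the observation that the two operations, taking super-commutators and taking derived Hom-superalgebras, commute, after which the work is done by Proposition $(\ref{derivMALCEV})$.
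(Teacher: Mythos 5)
Your proposal is correct and follows essentially the same route as the paper: the paper's proof likewise rests on the identity $(\mathcal{A}^{n})^{-}=(\mathcal{A}^{-})^{n}$ obtained from $[x,y]^{(n)}=\alpha^{2^{n}-1}([x,y])$. The only difference is that where you then invoke Proposition $(\ref{derivMALCEV})$ (legitimately, since multiplicativity is built into the paper's definition of Hom-superalgebra and passes to the super-commutator bracket as you check), the paper instead re-verifies the Hom-Malcev super-identity for $(\mathcal{A}^{n})^{-}$ by an explicit computation that amounts to the same thing.
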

\begin{proof} Assume that $(\mathcal{A},\mu,\alpha)$ is a Hom-Malcev-admissible superalgebra. Note that the super-commutator Hom-superalgebra of the \textsf{nth} derived Hom-superalgebra $\mathcal{A}^{n}$ is $(\mathcal{A}^{n})^{-}=(\mathcal{A},[-,-]^{(n)},\alpha^{2^{n}})$, where
\begin{eqnarray*}
  [x,y]^{(n)} &=&\mu^{(n)}(x,y)-(-1)^{|x||y|}\mu^{(n)}(y,x) \\
   &=&\alpha^{2^{n}-1} \circ (\mu(x,y)-(-1)^{|x||y|}\mu(y,x))\\
   &=&\alpha^{2^{n}-1} \circ [x,y].
\end{eqnarray*}
Thus, we have
\begin{equation}\label{HMALCEV ADM}
    (\mathcal{A}^{n})^{-}=(\mathcal{A}^{-})^{n}
\end{equation}
where $\mathcal{A}^{-}=(\mathcal{A},[-,-],\alpha)$ is the super-commutator Hom-superalgebra of $\mathcal{A}$ and $(\mathcal{A}^{-})^{n}$ is its \textsf{nth} derived Hom-superalgebra. Since $[-,-]^{(n)}$ is super skewsymmetric. We must show that $(\mathcal{A}^{n})^{-}$ satisfies the Hom-Malcev super-identity. To do that, observe that
\begin{eqnarray}\label{hma-admissi-der}
\widetilde{J}_{(\mathcal{A}^{n})^{-}}&=&\widetilde{J}_{(\mathcal{A}^{-})^{n}}\nonumber\\
&=&\alpha^{2^{n}-1} \circ \widetilde{J}_{\mathcal{A}^{-}}.
\end{eqnarray}
Using $(\ref{Jacobian rong alpha})$ and $(\ref{hma-admissi-der})$ we compute as follows
\begin{eqnarray*}
 &&\widetilde{J}_{(\mathcal{A}^{n})^{-}}(\alpha^{2^{n}}(x),\alpha^{2^{n}}(y),[t,z]^{(n)})
 +(-1)^{|x||y|+|t|(|x|+|y|)}\widetilde{J}_{(\mathcal{A}^{n})^{-}}(\alpha^{2^{n}}(t),\alpha^{2^{n}}(y),[x,z]^{(n)})\\
 &&=\alpha^{2^{n}-1}\circ \Big(\widetilde{J}_{\mathcal{A}^{-}}(\alpha^{2^{n}}(x),\alpha^{2^{n}}(y),\alpha^{2^{n}-1}([t,z]))\\
&&\ +(-1)^{|x||y|+|t|(|x|+|y|)}\widetilde{J}_{\mathcal{A}^{-}}(\alpha^{2^{n}}(t),\alpha^{2^{n}}(y),\alpha^{2^{n}-1}([x,z]))\Big)\\
&&=\alpha^{3(2^{n}-1)}\circ \Big(\widetilde{J}_{\mathcal{A}^{-}}(\alpha(x),\alpha(y),[t,z])
  + (-1)^{|x||y|+|t|(|x|+|y|)}\widetilde{J}_{\mathcal{A}^{-}}(\alpha(t),\alpha(y),[x,z])\Big)\\
 &&= (-1)^{|t||z|}[\widetilde{J}_{(\mathcal{A}^{n})^{-}}(x,y,z),(\alpha^{2^{n}})^{2}(t)]^{(n)}
 + (-1)^{|x|(|y|+|z|+|t|)+|t||y|}[\widetilde{J}_{(\mathcal{A}^{n})^{-}}(t,y,z),(\alpha^{2^{n}})^{2}(x)]^{(n)}.
\end{eqnarray*}
We have shown that $(\mathcal{A}^{n})^{-}$ satisfies the Hom-Malcev super-identity. So $\mathcal{A}^{n}$ is Hom-Malcev-admissible superalgebra.
\end{proof}
\begin{thm}\label{induced and deriv flexi}\  \begin{enumerate}
\item  Let $(\mathcal{A},\mu,\alpha)$ be a Hom-flexible superalgebra and $\beta: \mathcal{A} \longrightarrow \mathcal{A}$ be an even flexible superalgebra
endomorphism. Then $\mathcal{A}_{\beta}=(\mathcal{A},\mu_{\beta}=\beta \circ \mu,\beta\alpha)$ is a Hom-flexible superalgebra.
\item Let $(\mathcal{A},\mu,\alpha)$ be a Hom-flexible superalgebra. Then the derived Hom-superalgebra \\$\mathcal{A}^{n}=(\mathcal{A},\mu^{(n)}=\alpha^{2^{n}-1} \circ \mu,\alpha^{2^{n}})$ is also a Hom-flexible superalgebra for each $n \geq 0$.
\end{enumerate}
\end{thm}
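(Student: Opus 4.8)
The plan is to handle both parts by the twisting principle already used for Theorem~\ref{induced A} and Proposition~\ref{derivMALCEV}, exploiting that the Hom-flexible super-identity~(\ref{hom-supe-flexi}) is \emph{linear} in the Hom-associator and that its two terms carry identical Grassmann signs. Consequently, once I show that the twisted Hom-associator equals a fixed even linear operator applied to the original Hom-associator, the identity transports automatically, since that operator commutes with the scalar signs and annihilates the zero produced by~(\ref{hom-supe-flexi}).

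For part~(1), I would first express $\widetilde{as_{\mathcal{A}_{\beta}}}$ through $\widetilde{as_{\mathcal{A}}}$. Writing $\mu_{\beta}=\beta\circ\mu$ and using that $\beta$ is multiplicative, i.e.\ $\beta\circ\mu=\mu\circ\beta^{\otimes2}$, I pull $\beta$ out of the nested products twice to get
\begin{align*}
\widetilde{as_{\mathcal{A}_{\beta}}}(x,y,z)
&=\mu_{\beta}(\mu_{\beta}(x,y),\beta\alpha(z))-\mu_{\beta}(\beta\alpha(x),\mu_{\beta}(y,z))\\
&=\beta^{2}\circ\Big(\mu(\mu(x,y),\alpha(z))-\mu(\alpha(x),\mu(y,z))\Big)
=\beta^{2}\circ\widetilde{as_{\mathcal{A}}}(x,y,z).
\end{align*}
Feeding this into~(\ref{hom-supe-flexi}) and invoking the Hom-flexibility of $\mathcal{A}$ gives
\[
\widetilde{as_{\mathcal{A}_{\beta}}}(x,y,z)+(-1)^{|x||y|+|x||z|+|y||z|}\widetilde{as_{\mathcal{A}_{\beta}}}(z,y,x)
=\beta^{2}\circ\Big(\widetilde{as_{\mathcal{A}}}(x,y,z)+(-1)^{|x||y|+|x||z|+|y||z|}\widetilde{as_{\mathcal{A}}}(z,y,x)\Big)=0,
\]
so $\mathcal{A}_{\beta}$ is Hom-flexible.

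For part~(2), the cleanest route is induction on $n$: the base case $n=0$ is the hypothesis, and since $\mathcal{A}^{n+1}=(\mathcal{A}^{n})^{1}=(\mathcal{A}^{n})_{\alpha^{2^{n}}}$, the inductive step is exactly part~(1) applied to the Hom-flexible superalgebra $\mathcal{A}^{n}$ with the multiplicative endomorphism $\beta=\alpha^{2^{n}}$. Equivalently, one may establish the Hom-associator analogue of~(\ref{car hom-deri}), namely $\widetilde{as_{\mathcal{A}^{n}}}=\alpha^{2(2^{n}-1)}\circ\widetilde{as_{\mathcal{A}}}$, by the same extraction of powers of $\alpha$ as in the Lemma preceding Proposition~\ref{derivMALCEV}, and then transport~(\ref{hom-supe-flexi}) as above. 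I do not anticipate any genuine obstacle: both statements are routine instances of construction by composition, and the only thing requiring care is the bookkeeping when extracting the operator prefactor $\beta^{2}$ (resp.\ $\alpha^{2(2^{n}-1)}$) from the nested products — the signs are untouched by the twist precisely because flexibility is symmetric in its two associator entries.
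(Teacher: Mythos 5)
Your proof of part (1) is correct and coincides with the paper's argument: both establish $\widetilde{as_{\mathcal{A}_{\beta}}}=\beta^{2}\circ\widetilde{as_{\mathcal{A}}}$ via the multiplicativity of $\beta$ and then transport the flexible super-identity through the even operator $\beta^{2}$. The paper's proof in fact stops after the first assertion; your completion of part (2), either by induction via $\mathcal{A}^{n+1}=(\mathcal{A}^{n})^{1}$ with $\beta=\alpha^{2^{n}}$ or by the identity $\widetilde{as_{\mathcal{A}^{n}}}=\alpha^{2(2^{n}-1)}\circ\widetilde{as_{\mathcal{A}}}$, is the routine argument consistent with how the paper treats the other derived-superalgebra statements, and it is correct.
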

\begin{proof} For the first assertion, for any superalgebra $(\mathcal{A},\mu)$, we regard it as the Hom-superalgebra $(\mathcal{A},\mu, Id)$ with identity
twisting map. Then we have:
\begin{eqnarray*}
 \widetilde{ as_{\mathcal{A}_{\beta}}}(x,y,z) &=&\mu_{\beta}(\mu_{\beta}(x,y),\beta\alpha(z))-\mu_{\beta}(\beta\alpha(x),\mu_{\beta}(y,z)\\
   &=& \beta^{2}\circ (\mu(\mu(x,y),\alpha(z))-\mu(\alpha(x),\mu(y,z))) \\
   &=& \beta^{2}\circ \widetilde{as_{\mathcal{A}}}(x,y,z). \end{eqnarray*}
Then  $$\widetilde{as_{\mathcal{A}_{\beta}}}=\beta^{2}\circ \widetilde{as_{\mathcal{A}}}.$$
Now for a flexible superalgebra $(\mathcal{A},\mu)$, this implies that
\begin{eqnarray*}
\widetilde{as_{\mathcal{A}_{\beta}}}(x,y,z)&=&\beta^{2} \circ \widetilde{as_{\mathcal{A}}}(x,y,z)\\
                                &=&-(-1)^{|x||y|+|x||z|+|y||z|}\beta^{2} \circ \widetilde{as_{\mathcal{A}}}(z,y,x)\\
                                &=&-(-1)^{|x||y|+|x||z|+|y||z|} as_{\mathcal{A}_{\beta}}(z,y,x),\end{eqnarray*}
then $$\widetilde{as_{\mathcal{A}_{\beta}}}(x,y,z) +(-1)^{|x||y|+|x||z|+|y||z|} \widetilde{as_{\mathcal{A}_{\beta}}}(z,y,x)=0.$$
So $\mathcal{A}_{\beta}$ is Hom-flexible superalgebra.
\end{proof}
\subsection{Examples of Hom-flexible superalgebras} We construct examples of Hom-flexible superalgebras using Theorem $(\ref{induced and deriv flexi})$.
\begin{exa}\label{exp1-flex}$($Hom-flexible superalgebra of dimension $3)$. We consider the simple flexible superalgebra
$K_{3}(\beta,\gamma,\eta)=(K_{3})_{0}\oplus (K_{3})_{1}$ (see \cite{shest-Non-commutative}), where $(K_{3})_{0}=span\{e_{1}\}$ and $(K_{3})_{1}=span\{e_{2},e_{3}\},$
endowed with a product given by the following multiplication table$:$
$$\begin{tabular}{|l|l|l|l|}
  \hline
   $\mu$ & $e_{1}$ & $e_{2}$ & $e_{3}$ \\ \hline
   $e_{1}$& $e_{1}$ &$\beta e_{2}+\gamma e_{3}$ & $(1-\beta) e_{3}+\eta e_{2}$ \\\hline
   $e_{2}$&$(1-\beta) e_{2}-\gamma e_{3}$  & $-2\gamma e_{1}$ &$2\beta e_{1}$  \\\hline
  $e_{3}$ &$-\eta e_{2}+\beta e_{3}$  & $-2(1-\beta) e_{1}$ & $2\eta e_{1}$ \\
  \hline
\end{tabular}$$
Even superalgebra endomorphisms $\alpha$ with respect to the basis $\{e_{1},e_{2},e_{3}\}$ are defined by the matrix
$$\alpha=\left(
  \begin{array}{ccc}
    1 & 0 & 0 \\
    0 & a & \frac{\eta b}{\gamma} \\
    0 & \frac{-a+\sqrt{-4 \gamma \eta+a^{2}+4\gamma \eta a^{2}}}{2 \eta} & \frac{\gamma a-b}{\gamma} \\
  \end{array}
\right)$$
with $\beta=1$ and $\gamma,\eta\neq 0$. According to  Theorem $(\ref{induced and deriv flexi})$,
the even linear map $\alpha$ an the following multiplication
\begin{eqnarray*}
&& \mu_{\alpha}(e_{1},e_{1})=e_{1},~~
\mu_{\alpha}(e_{1},e_{2})=\Big(a(1-\beta)-\eta b\Big)e_{2}+\Big((1-\beta)\frac{-a+\sqrt{-4 \gamma \eta+a^{2}+4\gamma \eta a^{2}}}{2 \eta}+ (b-\gamma a)\Big) e_{3},
\\ &&  \mu_{\alpha}(e_{1},e_{3})=\Big(\frac{b \beta \eta}{\gamma}- \eta a\Big)e_{2}+\Big(\frac{a-\sqrt{-4 \gamma \eta+a^{2}+4\gamma \eta a^{2}}}{2}
+\frac{\beta(\gamma a-b)}{\gamma}\Big) e_{3},
\\ &&  \mu_{\alpha}(e_{2},e_{1})=\Big(\beta a+\gamma b\Big)e_{2}+\Big((\gamma a-b)+\beta \frac{-a+\sqrt{-4 \gamma \eta+a^{2}+4\gamma \eta a^{2}}}{2 \eta}\Big)e_{3},
\\ &&  \mu_{\alpha}(e_{2},e_{2})=-2\gamma e_{1},~~\mu_{\alpha}(e_{2},e_{3})=-2(1- \beta) e_{1},
\\ &&  \mu_{\alpha}(e_{3},e_{1})=(\eta a+\frac{(1-\beta) b\eta}{\gamma})e_{2}+ \Big(\frac{(1-\beta)(\gamma a-b)}{\gamma}
+(\frac{-a+\sqrt{-4 \gamma \eta+a^{2}+4\gamma \eta a^{2}}}{2 })\Big)e_{3},
\\ &&  \mu_{\alpha}(e_{3},e_{2})=2\beta e_{1},~~
\mu_{\alpha}(e_{3},e_{3})=2 \eta e_{1},
\end{eqnarray*}
where $a,b \in \mathbb{K}$ and $\gamma,\eta \neq 0$, determine  $3$-dimensional Hom-flexible, Hom-Malcev-admissible superalgebra.
\end{exa}

\begin{exa}\label{exp2-flexi} $($Hom-flexible superalgebras of dimension $4)$. Let $\beta \in \mathbb{K}$ and $t\neq 0$. Define a superalgebra $U=D_{t}(\beta)$ (see \cite{shest-Non-commutative}), where $U_{0}=span\{e_{1},e_{2}\}$ and  $U_{1}=span\{x,y\}$ by setting
$$\begin{tabular}{|l|l|l|l|l|}
  \hline
  $\mu$ & $e_{1}$ & $e_{2}$ & $x$ & $y$ \\ \hline
  $e_{1}$ & $e_{1}$ & $0$ & $(1-\beta)y$ & $\beta y$ \\ \hline
  $e_{2}$ & $0$ & $e_{2}$ & $\beta x$ & $(1-\beta)y$ \\ \hline
  $x$ & $\beta x$ & $\beta x$ & $0$ & $-2((1-\beta) e_{1}+\beta t e_{2})$ \\ \hline
  $y$ & $(1-\beta)y$ & $\beta y$ & $2(\beta e_{1}+(1-\beta)t e_{2})$ & $0$ \\
  \hline
\end{tabular}$$
Then $(U,\mu)$ is a simple flexible, Malcev-admissible superalgebra.\\
Even superalgebra endomorphisms $\alpha$ with respect to the basis $\{e_{1},e_{2},x,y\}$ are defined by the matrix
$$\alpha=\left(
          \begin{array}{cccc}
            1 & 0 & 0 & 0 \\
            0 & 2 & 0 & 0 \\
            0 & 0 & a & 0 \\
            0 & 0 & 0 & \frac{1}{a} \\
          \end{array}
        \right)$$
where $a\neq 0$. According to  Theorem $(\ref{induced and deriv flexi})$, the even linear map $\alpha$ an the following multiplication table$:$
$$\begin{tabular}{|l|l|l|l|l|}
  \hline
  $\mu_{\alpha}$ & $e_{1}$ & $e_{2}$ & $x$ & $y$ \\ \hline
  $e_{1}$ & $e_{1}$ & $0$ & $\frac{(1-\beta)}{a}y$ & $\frac{\beta}{a} y$ \\ \hline
  $e_{2}$ & $0$ & $e_{2}$ & $\beta a x$ & $\frac{(1-\beta)}{a}y$ \\ \hline
  $x$ & $\beta a x$ & $\beta a x$ & $0$ & $-2((1-\beta) e_{1}+\beta t e_{2})$ \\ \hline
  $y$ & $\frac{(1-\beta)}{a}y$ & $\frac{\beta}{a} y$ & $2(\beta e_{1}+(1-\beta)t e_{2})$ & $0$ \\
  \hline
\end{tabular}$$
where $a\neq 0$, determine $4$-dimensional Hom-flexible, Hom-Malcev-admissible superalgebra.\\
Note that $U_{\alpha}$ is not Hom-alternative superalgebra because
\begin{eqnarray*}
 \widetilde{as_{U_{\alpha}}}(e_{2},x,y)+\widetilde{as_{U_{\alpha}}}(x,e_{2},y)&=& 2\beta^{2}e_{1}+(\beta-2)(1-\beta)t e_{2}
\end{eqnarray*}
$\widetilde{as_{U_{\alpha}}}(e_{2},x,y)+\widetilde{as_{U_{\alpha}}}(x,e_{2},y)\neq 0$ whenever $\beta \neq 0,1,2$. So $(U,\mu_{\alpha},\alpha)$ does not satisfy the Hom-alternative super-identity, therefore, is not Hom-alternative.\\
Also, $U_{\alpha}$ is not Hom-Lie-admissible superalgebra because
\begin{eqnarray*}
 \hskip1cm \widetilde{ J}_{(U_{\alpha})^{-}}(e_{1},e_{2},x) = \frac{(1-\beta)(2\beta-1)}{a^{2}}y
\end{eqnarray*}
$\widetilde{ J}_{(U_{\alpha})^{-}}(e_{1},e_{2},x)\neq 0$ whenever $\beta\neq 1,\frac{1}{2}$ and $a\neq 0$.\\
Finally $(U,\mu_{\alpha})$ is not Malcev-admissible, i.e., $(U,[-,-]_{\alpha})$ is not Malcev superalgebra. Indeed, let $\mathcal{J}$ the usual super-Jacobian of $(U,[-,-]_{\alpha})$ as in $(\ref{sup-Jacobian})$. Then, on the one hand, we have
\begin{eqnarray*}
  \mathcal{J}(e_{1},e_{2},[x,y]_{\alpha})+\mathcal{J}(x,e_{2},[e_{1},y]_{\alpha}) &=&[[e_{1},e_{2}]_{\alpha},[x,y]_{\alpha}]_{\alpha}-[e_{1},[e_{2},[x,y]_{\alpha}]_{\alpha}- [e_{1},[x,y]_{\alpha}]_{\alpha},e_{2}]_{\alpha} \\
   &+& [[x,e_{2}]_{\alpha},[e_{1},y]_{\alpha}]_{\alpha}-[x,[e_{2},[e_{1},y]_{\alpha}]_{\alpha}- [x,[e_{1},y]_{\alpha}]_{\alpha},e_{2}]_{\alpha} \\
   &=&\frac{2(1-\beta)(1-2\beta)}{a^{2}}e_{1}+\frac{2t(2-\beta)(2\beta-1)}{a^{2}}e_{2}.
\end{eqnarray*}
On the other hand, we have
\begin{eqnarray*}
  [\mathcal{J}(e_{1},e_{2},y),x]_{\alpha}+[\mathcal{J}(x,e_{2},y),e_{1}]_{\alpha} &=&[[[e_{1},e_{2}]_{\alpha},y]_{_{\alpha}},x]_{\alpha}-[[e_{1},[e_{2},y]_{\alpha}]_{\alpha},x]_{\alpha}
  -[[[e_{1},y]_{\alpha},e_{2}]_{_{\alpha}},x]_{\alpha}\\
  &+& [[[x,e_{2}]_{\alpha},y]_{_{\alpha}},e_{1}]_{\alpha}-[[x,[e_{2},y]_{\alpha}]_{\alpha},e_{1}]_{\alpha}
  -[[[x,y]_{\alpha},e_{2}]_{_{\alpha}},e_{1}]_{\alpha}\\
   &=& \frac{4\beta(\beta-1)(2\beta-1)}{a^{2}}e_{1}+\frac{4\beta t(\beta-1)(1-2\beta)}{a^{2}}e_{2}.
\end{eqnarray*}
 $\mathcal{J}(e_{1},e_{2},[x,y]_{\alpha})+\mathcal{J}(x,e_{2},[e_{1},y]_{\alpha})\neq [\mathcal{J}(e_{1},e_{2},y),x]_{\alpha}
 +[\mathcal{J}(x,e_{2},y),e_{1}]_{\alpha}$ whenever $\beta \neq 0,\frac{1}{2},1,2$ and $a\neq 0$. So $(U,[-,-]_{\alpha})$ does not satisfy the Malcev super-identity and, therefore, is not a Malcev superalgebra. Then $(U,\mu_{\alpha})$ does not in general a Malcev-admissible superalgebra.
\end{exa}
\begin{center}\section{Hom-Alternative Superalgebras are Hom-Jordan-admissible}\end{center}
 We introduce in this section  Hom-Jordan $(-$admissible$)$ superalgebras and we show in Theorem $(\ref{has})$ that every Hom-alternative superalgebras are Hom-Jordan-admissible. Then we prove Theorems $(\ref{iduced})$ and $(\ref{admissible})$, which are constructions results for Hom-Jordan and Hom-Jordan admissible superalgebras. In Examples $(\ref{ex-jordan1})$ and $(\ref{ex-jorda2})$ we construct Hom-Jordan superalgebras from the $3$-dimensional Kaplansky Jordan superalgebra and from the $4$-dimensional simple Jordan superalgebra $D_{t}$ where $t\neq 0$.\\
Let us begin with some relevant definitions.
\begin{df}\label{plus hom} Let $(\mathcal{A},\mu,\alpha)$ be a Hom-superalgebra. Define its \textsf{plus} Hom-superalgebras as the Hom-superalgebra
$\mathcal{A}^{+}=(\mathcal{A},\ast,\alpha)$, where the product $\ast$ is given by
$$x\ast y= \frac{1}{2}(\mu(x,y)+(-1)^{|x||y|}\mu(y,x))=\frac{1}{2}(xy+(-1)^{|x||y|}yx),$$
with $\mu(x,y)=xy$. The product $\ast$ is super-commutative.
\end{df}
\begin{df}\   \begin{enumerate}
\item  A \textsf{Hom-Jordan superalgebra} is a Hom-superalgebra $(\mathcal{A},\mu,\alpha)$ such that $\mu$ is super-commutative
$(i.e. \mu(x,y)=(-1)^{|x||y|}\mu(y,x))$and the \textsf{Hom-Jordan super-identity}
\begin{eqnarray}\label{HOM-SUPER-IDE}
&&\sum \limits _{x,y,t}(-1)^{|t|(|x|+|z|)} \widetilde{as_{\mathcal{A}}}(x y,\alpha(z),\alpha(t))=0
\end{eqnarray}
is satisfied for all $x,y,z$ and $t$ in $\mathcal{H}(\mathcal{A})$, where  $\sum \limits _{x,y,t}$ denotes the cyclic sum over $(x,y,t)$ and $\widetilde{as_{\mathcal{A}}}$ is the Hom-associator $(\ref{ass})$.
 \item  A \textsf{Hom-Jordan-admissible superalgebra} is a Hom-superalgebra $(\mathcal{A},\mu,\alpha)$ whose plus Hom-superalgebra $\mathcal{A}^{+}=(\mathcal{A},\ast,\alpha)$ is a Hom-Jordan superalgebra.
 \end{enumerate}
 The Hom-Jordan super-identity $(\ref{HOM-SUPER-IDE})$ can be rewritten as
 \begin{align*}
\sum \limits _{x,y,t}(-1)^{|t|(|x|+|z|)}\Big(\mu(\mu(\mu(x,y),\alpha(z)),\alpha^{2}(t))-\mu(\alpha(\mu(x,y)),\mu(\alpha(z),\alpha(t)))\Big)
=0.
\end{align*}
 Since the product $\ast$ is super-commutative, a Hom-superalgebra $(\mathcal{A},\mu,\alpha)$ is Hom-Jordan-admissible if
 and only if $\mathcal{A}^{+}=(\mathcal{A},\ast,\alpha)$ satisfies the Hom-Jordan super-identity
\begin{eqnarray}\label{hjsuper}
\sum \limits _{x,y,t}(-1)^{|t|(|x|+|z|)} \widetilde{as_{\mathcal{A}^{+}}}(x \ast y,\alpha(z),\alpha(t)) =0
\end{eqnarray}
for all $x,y,z$ and $t$ in $\mathcal{H}(\mathcal{A})$, where $\sum \limits _{x,y,t}$ denotes summations over the cyclic permutation on $x,y,t$.\\
Or equivalently
\begin{align*}
&\sum \limits _{x,y,t}(-1)^{|t|(|x|+|z|)}\Big(((x \ast y)\ast \alpha(z))\ast \alpha^{2}(t)-\alpha(x \ast y)\ast(\alpha(z)\ast \alpha(t))\Big)
=0
\end{align*}
for all $x,y,z$ and $t$ in $\mathcal{H}(\mathcal{A}),$ where $\sum \limits _{x,y,t}$ denotes the cyclic sum over $(x,y,t)$.
\end{df}
\begin{exa} A Jordan $($-admissible$)$ superalgebra is a Hom-Jordan $(-$admissible$)$ superalgebra with $\alpha=Id$, since the Hom-Jordan super-identity
$(\ref{hjsuper})$ with $\alpha=Id$ is the Jordan super-identity $(\ref{SUPER-IDE-JORDAN})$. We  refer to \cite{helena-super-jor-alte-malcev-with,Kac-Jordan-super1977,Martinez-Jordan,Racine-classi-jordan-super,shest-jordan-super,shtern}
for discussions about structure of Jordan superalgebras.
\end{exa}
\begin{rem} In \cite{makhlouf hom-altern2009hom}, a Hom-Jordan algebra is defined as a commutative Hom-algebra satisfying\\
 $\sum \limits _{x,y,t}\widetilde{as_{\mathcal{A}}}(xy,z,\alpha(t))=0$, which becomes our Hom-Jordan super-identity $(\ref{hjsuper})$ if $z$ is replaced by $\alpha(z)$. Using this  definition of a Hom-Jordan algebra, Hom-alternative superalgebras are not Hom-Jordan-admissible.
\end{rem}
\begin{thm}\label{has} Let $(\mathcal{A},\mu,\alpha)$ be a multiplicative Hom-alternative superalgebra. Then $\mathcal{A}$ is Hom-Jordan-admissible, in the sense that
the Hom-superalgebra
$$\mathcal{A}^{+}=(\mathcal{A},\ast,\alpha)$$
is a multiplicative Hom-Jordan superalgebra, where $x \ast y=\frac{1}{2}(x y+(-1)^{|x||y|}y x).$
\end{thm}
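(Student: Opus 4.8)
The plan is to verify the three defining properties of a Hom-Jordan superalgebra for $\mathcal{A}^+=(\mathcal{A},\ast,\alpha)$ in turn. Super-commutativity of $\ast$ is immediate from Definition (\ref{plus hom}), and multiplicativity of $\alpha$ with respect to $\ast$ is inherited from its multiplicativity with respect to $\mu$, since $\ast$ is assembled from $\mu$ by symmetrization and $\alpha$ commutes with $\mu$. Hence the entire content of the theorem is the Hom-Jordan super-identity (\ref{hjsuper}) for the product $\ast$, and the rest of the proof is devoted to establishing it.

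First I would expand the plus-associator $\widetilde{as_{\mathcal{A}^+}}(x\ast y,\alpha(z),\alpha(t))$ in terms of the original multiplication. Writing $\ast=\tfrac{1}{2}(\mu+\mu^{\mathrm{op}})$ with $\mu^{\mathrm{op}}(a,b)=(-1)^{|a||b|}\mu(b,a)$ and using the multiplicativity of $\alpha$, each of the two summands $((x\ast y)\ast\alpha(z))\ast\alpha^2(t)$ and $\alpha(x\ast y)\ast(\alpha(z)\ast\alpha(t))$ unfolds into a Koszul-signed sum of fully bracketed $\mu$-products of the four entries $x,y,\alpha(z),\alpha(t)$. Grouping these bracketed products into differences of the shape $\mu(\mu(\cdot,\cdot),\alpha(\cdot))-\mu(\alpha(\cdot),\mu(\cdot,\cdot))$ rewrites $\widetilde{as_{\mathcal{A}^+}}(x\ast y,\alpha(z),\alpha(t))$ as a signed combination of Hom-associators $\widetilde{as_{\mathcal{A}}}$ of the original product, evaluated at the various orderings of $x,y,z,t$. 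This is the symmetric counterpart of the computation carried out for the super-commutator in Lemma (\ref{superja and ass}).

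Next I would collapse this combination using that $\mathcal{A}$ is Hom-alternative. By the left and right Hom-alternative super-identities (\ref{sa1}), (\ref{sa2}) together with Lemma (\ref{propriet ass H-A}), the Hom-associator $\widetilde{as_{\mathcal{A}}}$ is super-alternating, so every permuted associator produced above equals a Koszul sign times $\widetilde{as_{\mathcal{A}}}$ in one fixed ordering of its arguments. After this reduction I would substitute back into the cyclic sum $\sum_{x,y,t}(-1)^{|t|(|x|+|z|)}(\cdots)$ and collect terms, the claim being that the surviving associators cancel in pairs. To organize this cancellation cleanly I would lean on the Hom-Teichm\"uller super-identity (Lemma (\ref{hom-prop})), whose middle term $\widetilde{as_{\mathcal{A}}}(x y,\alpha(z),\alpha(t))$ is precisely of the product-in-first-slot type occurring in the Hom-Jordan expression, and on the fact, established earlier, that the $\mathbb{Z}_2$-graded Hom-Bruck-Kleinfeld function $f$ is super-alternating.

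The main obstacle is the $\mathbb{Z}_2$-graded sign bookkeeping. In the ungraded skeleton the cancellation is a routine rearrangement, but in the super setting each transposition of two arguments of $\widetilde{as_{\mathcal{A}}}$ introduces a Koszul factor $(-1)^{|\cdot||\cdot|}$, while the cyclic Jordan sum carries the additional signs $(-1)^{|t|(|x|+|z|)}$. The delicate point is to check that, after the alternation reduction, these two families of signs combine so that each associator is matched by an equal-and-opposite partner across the three cyclic summands; this is where errors are most likely, and the computation must be done with the same care as the sign analysis in Lemma (\ref{f et F}) and Lemma (\ref{superja and ass}).
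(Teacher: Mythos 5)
Your plan follows essentially the same route as the paper: the paper's Lemma (\ref{uti}) carries out exactly the expansion you describe, rewriting the cyclic sum of plus-Hom-associators as a signed combination of $\mu$-Hom-associators with $xy$ or $yx$ occupying one slot together with commutator terms $[\alpha^{2}(z),\widetilde{as_{\mathcal{A}}}(\cdot,\cdot,\cdot)]$, and the proof of Theorem (\ref{has}) then annihilates every surviving term by super-alternativity of $\widetilde{as_{\mathcal{A}}}$, just as you propose. The only divergence is cosmetic: the paper never needs the Hom-Teichm\"uller identity or the Bruck--Kleinfeld function for this theorem, since the cancellation is organized directly from the expansion.
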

To prove Theorem $(\ref{has})$, we will use the following preliminary observation.
\begin{lem}\label{uti} Let $(\mathcal{A},\mu,\alpha)$ be any Hom-superalgebra and $\mathcal{A}^{+}=(\mathcal{A},\ast,\alpha)$ be its plus Hom-superalgebra. Then we have
\begin{eqnarray}\label{id1}
&&\sum \limits _{x,y,t}4(-1)^{|t|(|x|+|z|)} \widetilde{as_{\mathcal{A}^{+}}}(x \ast y,\alpha(z),\alpha(t)) \nonumber\\
&& \ \ =\sum \limits _{x,y,t}\Big((-1)^{|t|(|x|+|z|)} \widetilde{as_{\mathcal{A}}}(x y,\alpha(z),\alpha(t))+(-1)^{|t|(|x|+|z|)+|x||y|} \widetilde{as_{\mathcal{A}}}(y x,\alpha(z),\alpha(t))\nonumber\\
&&\ \ \ -(-1)^{|z|(|x|+|y|)+|y|(|x|+|t|)} \widetilde{as_{\mathcal{A}}}(\alpha(t),\alpha(z),y x)-(-1)^{|z|(|x|+|y|)+|y|(|x|+|t|)} \widetilde{as_{\mathcal{A}}}(\alpha(t),\alpha(z),y x)\nonumber\\
&&\ \ \ -(-1)^{|t||y|} \widetilde{as_{\mathcal{A}}}(\alpha(t),x y,\alpha(z))-(-1)^{|y|(|x|+|t|)} \widetilde{as_{\mathcal{A}}}( \alpha(t),y x,\alpha(z))\nonumber\\
&&\ \ \  +(-1)^{|x|(|y|+|t|)+|z|(|x|+|y|+|t|)} \widetilde{as_{\mathcal{A}}}(\alpha(z),y x,\alpha(t))+(-1)^{|t||x|+|z|(|x|+|y|+|t|)} \widetilde{as_{\mathcal{A}}}(\alpha(z),x y,\alpha(t))\nonumber\\
&&\ \ \  -(-1)^{|z|(|x|+|y|+|t|)+|t||y|} \widetilde{as_{\mathcal{A}}}(\alpha(z),\alpha(t),x y)+(-1)^{|t||x|} \widetilde{as_{\mathcal{A}}}(x y,\alpha(t),\alpha(z))\nonumber\\
&&\ \ \ -(-1)^{|z|(|x|+|y|+|t|)+|y|(|x|+|z|)} \widetilde{as_{\mathcal{A}}}(\alpha(z),\alpha(t),x y) +(-1)^{|x|(|y|+|t|)} \widetilde{as_{\mathcal{A}}}(y x,\alpha(t),\alpha(z))\Big)\\
&&\ \ \ +(-1)^{|x||y|+|z|(|x|+|y|+|t|)}[\alpha^{2}(z),\widetilde{as_{\mathcal{A}}}(y,t,x)]+(-1)^{|y|(|x|+|t|)+|z|(|x|+|y|+|t|)}[\alpha^{2}(z),\widetilde{as_{\mathcal{A}}}(t,y,x)]\nonumber\\
&&\ \ \ +(-1)^{|t||y|+|z|(|x|+|y|+|t|)}[\alpha^{2}(z),\widetilde{as_{\mathcal{A}}}(t,x,y)]+(-1)^{|t|(|x|+|y|)+|z|(|x|+|y|+|t|)}[\alpha^{2}(z),\widetilde{as_{\mathcal{A}}}(x,t,y)]\nonumber\\
&&\ \ \ +(-1)^{|x||t|+|z|(|x|+|y|+|t|)}[\alpha^{2}(z),\widetilde{as_{\mathcal{A}}}(x,y,t)]+(-1)^{|x|(|y|+|t|)+|z|(|x|+|y|+|t|)}[\alpha^{2}(z),\widetilde{as_{\mathcal{A}}}(y,x,t)]\nonumber
\end{eqnarray}
for all $x,y,z,t \in \mathcal{H}(\mathcal{A})$, where $[-,-]$ is the super-commutator bracket\\ $($i.e. $[x,y]=xy-(-1)^{|x||y|}yx$ $)$ and where
$\sum \limits _{x,y,t}$ denotes the cyclic sum over $(x,y,t)$.
\end{lem}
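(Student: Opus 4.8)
The plan is to prove this as a purely formal identity. Since the hypothesis only requires $(\mathcal{A},\mu,\alpha)$ to be a Hom-superalgebra, no alternativity, flexibility, or super-alternating property of $\widetilde{as_{\mathcal{A}}}$ is available; the sole structural fact I may use is the multiplicativity $\alpha\circ\mu=\mu\circ\alpha^{\otimes2}$ built into the definition. Consequently both sides will expand into finite sums of $\mu$-monomials in the free Hom-superalgebra on $x,y,z,t$, and it suffices to match them term by term. First I would unfold one summand of the left-hand side. Writing $a\ast b=\frac12\big(\mu(a,b)+(-1)^{|a||b|}\mu(b,a)\big)$ and using multiplicativity in the forms $\alpha(a\ast b)=\alpha(a)\ast\alpha(b)$ and $\alpha(\alpha(t))=\alpha^2(t)$, the outer plus-Hom-associator becomes
\[
4\,\widetilde{as_{\mathcal{A}^+}}(x\ast y,\alpha(z),\alpha(t))=\tfrac12\Big(\big((x\bullet y)\bullet\alpha(z)\big)\bullet\alpha^2(t)-\big(\alpha(x)\bullet\alpha(y)\big)\bullet\big(\alpha(z)\bullet\alpha(t)\big)\Big),
\]
where $a\bullet b:=\mu(a,b)+(-1)^{|a||b|}\mu(b,a)=2(a\ast b)$ abbreviates the unnormalised symmetric product. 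Expanding each $\bullet$ into its two Koszul-signed summands turns the first term into eight depth-three $\mu$-monomials of the shapes $((\cdot\,\cdot)\alpha(z))\alpha^2(t)$ and $\alpha^2(t)((\cdot\,\cdot)\alpha(z))$, and the second into eight balanced monomials of shape $(\alpha(\cdot)\alpha(\cdot))(\alpha(\cdot)\alpha(\cdot))$.

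The central step is to recombine these sixteen monomials (per cyclic summand) into the Hom-associators and super-commutators on the right. Directly from the definition of $\widetilde{as_{\mathcal{A}}}$ together with multiplicativity one has the bookkeeping relations
\[
\widetilde{as_{\mathcal{A}}}(ab,\alpha(c),\alpha(d))=\big((ab)\alpha(c)\big)\alpha^2(d)-\big(\alpha(a)\alpha(b)\big)\big(\alpha(c)\alpha(d)\big),
\]
\[
\widetilde{as_{\mathcal{A}}}(\alpha(d),\alpha(c),ab)=\big(\alpha(d)\alpha(c)\big)\big(\alpha(a)\alpha(b)\big)-\alpha^2(d)\big(\alpha(c)(ab)\big),
\]
together with their analogue in which the product occupies the middle slot. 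Each such relation pairs a nested monomial arising from the first term with exactly one balanced monomial arising from the second, so that after applying them the left-nested and right-nested monomials assemble into the twelve associators $\widetilde{as_{\mathcal{A}}}(xy,\alpha(z),\alpha(t))$, $\widetilde{as_{\mathcal{A}}}(\alpha(t),\alpha(z),yx)$, $\widetilde{as_{\mathcal{A}}}(\alpha(z),xy,\alpha(t))$, $\widetilde{as_{\mathcal{A}}}(\alpha(z),\alpha(t),xy)$, and their siblings, while the monomials in which $z$ sits in the middle slot are expected to recombine, through the super-commutator bracket, into the terms $[\alpha^2(z),\widetilde{as_{\mathcal{A}}}(\cdots)]$ indexed by the permutations of $(x,y,t)$. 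Throughout, $z$ is the one variable left fixed by the cyclic operator, so organising the monomials by bracketing shape and by the (always central) position of $z$ is what keeps the combinatorics in hand; finally I would take the cyclic sum $\sum_{x,y,t}$ with the prescribed weight $(-1)^{|t|(|x|+|z|)}$ and collect to reach the asserted right-hand side.

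The main obstacle is entirely combinatorial: there are sixteen signed $\mu$-monomials in each of the three cyclic summands, and every regrouping, every graded transposition, and every graded commutator carries a Koszul sign that must be propagated consistently. The delicate points are, first, checking that the signs attached to the balanced monomials from the second term match those of the nested monomials from the first, so that each pair genuinely closes into a single $\widetilde{as_{\mathcal{A}}}$ rather than leaving a defect; and second, verifying that the residual middle-$z$ terms reorganise into bona fide commutators. A useful sanity check here is to compare, slot by slot, the number of $\alpha$'s attached to $z$: on the left $z$ enters every monomial once, as $\alpha(z)$, which constrains sharply where each monomial may land and which of the claimed commutator terms can survive. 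Once this termwise accounting is carried out in the free Hom-superalgebra, the identity holds in full generality.
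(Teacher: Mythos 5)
Your plan is essentially the paper's own proof: expand $4\,\widetilde{as_{\mathcal{A}^{+}}}(x\ast y,\alpha(z),\alpha(t))$ via the definition of $\ast$ and the multiplicativity of $\alpha$ into sixteen signed $\mu$-monomials, absorb them into Hom-associators using exactly your bookkeeping relations (four associators come from direct nested/balanced pairings, the other eight from completing the remaining monomials at the cost of new terms), and collect the residue --- which carries $\alpha^{2}(z)$ as an \emph{outer} factor, not a middle-slot one --- into the six commutators $[\alpha^{2}(z),\widetilde{as_{\mathcal{A}}}(\cdot,\cdot,\cdot)]$. The only caveats are that the Koszul-sign accounting, which is the entire content of the lemma and which the paper does carry out term by term, is left unexecuted, and that your (correct) normalisation $4\,\widetilde{as_{\mathcal{A}^{+}}}=\tfrac12(\cdots)$ exposes an overall scalar the paper's displayed expansion silently absorbs.
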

\begin{proof} As usual we write $\mu(x,y)$ as the juxtaposition $xy$. Starting from the left-hand side of $(\ref{id1})$, we have$:$
\begin{eqnarray*}
&&\sum \limits _{x,y,t}4(-1)^{|t|(|x|+|z|)} \widetilde{as_{\mathcal{A}^{+}}}(x \ast y,\alpha(z),\alpha(t))\nonumber\\
&&\ \ =\sum \limits _{x,y,t}4 (-1)^{|t|(|x|+|z|)}\Big(((x \ast y)\ast \alpha(z))\ast \alpha^{2}(t)-\alpha(x \ast y)\ast(\alpha(z)\ast \alpha(t))\Big) \nonumber\\
&&\ \ = \sum \limits _{x,y,t}\Big( (-1)^{|t|(|x|+|z|)}((x y)\alpha(z))\alpha^{2}(t)+(-1)^{|t||y|}\alpha^{2}(t)((x y)\alpha(z))\nonumber\\
&&\ \ \ +(-1)^{|t|(|x|+|z|)+|z|(|x|+|y|)}(\alpha(z)(x y))\alpha^{2}(t)+(-1)^{|t||y|+|z|(|x|+|y|)}\alpha^{2}(t)(\alpha(z)(x y))\nonumber\\
&&\ \ \ +(-1)^{|t|(|x|+|z|)+|x||y|}((y x)\alpha(z))\alpha^{2}(t)+(-1)^{|y|(|x|+|t|)}\alpha^{2}(t)((y x)\alpha(z))\nonumber\\
&&\ \ \ +(-1)^{|t|(|x|+|z|)+|y|(|x|+|t|)}\alpha^{2}(t)(\alpha(z)(y x))+(-1)^{|x|(|t|+|y|)+|z|(|x|+|y|+|t|)}(\alpha(z)(y x))\alpha^{2}(t)\nonumber\\
&&\ \ \ -(-1)^{|t|(|x|+|z|)}\alpha(x y)(\alpha(z)\alpha(t))-(-1)^{|t||y|+|z|(|x|+|y|+|t|)}(\alpha(z)\alpha(t))\alpha(x y)\nonumber\\
&&\ \ \ -(-1)^{|t||x|}\alpha(x y)(\alpha(t)\alpha(z))-(-1)^{|t||y|+|z|(|x|+|y|)}(\alpha(t)\alpha(z))\alpha(x y)\nonumber\\
&&\ \ \ -(-1)^{|t|(|x|+|z|)+|x||y|}\alpha(y x)(\alpha(z)\alpha(t))-(-1)^{|y|(|t|+|x|)+|z|(|x|+|y|+|t|)}(\alpha(z)\alpha(t))\alpha(y x)\nonumber\\
&&\ \ \ -(-1)^{|x|(|y|+|t|)}\alpha(y x)(\alpha(t)\alpha(z))-(-1)^{|y|(|t|+|x|)+|z|(|x|+|y|)}(\alpha(t)\alpha(z))\alpha(y x)\nonumber\\
&&\ \ =(-1)^{|t|(|x|+|z|)} \widetilde{as_{\mathcal{A}}}(x y,\alpha(z),\alpha(t))-(-1)^{|t||y|+|z|(|x|+|y|)}\widetilde{as_{\mathcal{A}}}(\alpha(t),\alpha(z),x y)\nonumber\\
\end{eqnarray*}
\begin{eqnarray}\label{id2}
&&\ \ \ +(-1)^{|t|(|x|+|z|)+|x||y|}\widetilde{as_{\mathcal{A}}}(y x,\alpha(z),\alpha(t))-(-1)^{|y|(|x|+|t|)+|z|(|x|+|y|)}\widetilde{as_{\mathcal{A}}}(\alpha(t),\alpha(z),y x)\nonumber\\
&&\ \ \ +(-1)^{|t||y|}\alpha^{2}(t)((x y)\alpha(z))+(-1)^{|t|(|x|+|z|)+|z|(|x|+|y|)}(\alpha(z)(x y))\alpha^{2}(t)\nonumber\\
&&\ \ \ +(-1)^{|y|(|x|+|t|)}\alpha^{2}(t)((y x)\alpha(z))+(-1)^{|x|(|t|+|y|)+|z|(|x|+|y|+|t|)}(\alpha(z)(y x))\alpha^{2}(t) \nonumber\\
&&\ \ \ -(-1)^{|t||y|+|z|(|x|+|y|+|t|)}(\alpha(z)\alpha(t))\alpha(x y)-(-1)^{|t||x|}\alpha(x y)(\alpha(t)\alpha(z))\nonumber\\
&&\ \ \ -(-1)^{|y|(|t|+|x|)+|z|(|x|+|y|+|t|)}(\alpha(z)\alpha(t))\alpha(y x)-(-1)^{|x|(|y|+|t|)}\alpha(y x)(\alpha(t)\alpha(z))\Big).
\end{eqnarray}
Using the definition of the Hom-associator $(\ref{ass})$, the last eight terms in $(\ref{id2})$ are$:$
\begin{eqnarray*}
\bullet && \sum \limits _{x,y,t}(-1)^{|x|(|t|+|y|)+|z|(|x|+|y|+|t|)}(\alpha(z)(y x))\alpha^{2}(t)\nonumber\\
&&=\sum \limits _{x,y,t}(-1)^{|x|(|t|+|y|)+|z|(|x|+|y|+|t|)}\Big(\widetilde{as_{\mathcal{A}}}(\alpha(z),y x,\alpha(t))+\alpha^{2}(z)((y x)\alpha(t))\Big)\nonumber\\
\bullet && \sum \limits _{x,y,t}(-1)^{|t||y|}\alpha^{2}(t)((x y)\alpha(z))\nonumber\\
&&=-\sum \limits _{x,y,t}(-1)^{|t||y|}\Big(\widetilde{as_{\mathcal{A}}}(\alpha(t),x y,\alpha(z))-(\alpha(t)(x y))\alpha^{2}(z)\Big)\nonumber\\
\bullet &&  \sum \limits _{x,y,t}(-1)^{|x|(|t|+|y|)+|z|(|x|+|y|+|t|)}(\alpha(z)(y x))\alpha^{2}(t)\nonumber\\
&&=\sum \limits _{x,y,t}(-1)^{|x|(|t|+|y|)+|z|(|x|+|y|+|t|)}\Big(\widetilde{as_{\mathcal{A}}}(\alpha(z),y x,\alpha(t))+\alpha^{2}(z)((y x)\alpha(t))\Big)\nonumber\\
\bullet && \sum \limits _{x,y,t}(-1)^{|t||y|}\alpha^{2}(t)((x y)\alpha(z))\nonumber\\
&&=-\sum \limits _{x,y,t}(-1)^{|t||y|}\Big(\widetilde{as_{\mathcal{A}}}(\alpha(t),x y,\alpha(z))-(\alpha(t)(x y))\alpha^{2}(z)\Big)\nonumber\\
\bullet && \sum \limits _{x,y,t}(-1)^{|t|(|x|+|z|)+|z|(|x|+|y|)}(\alpha(z)(x y))\alpha^{2}(t)\nonumber\\
&&=\sum \limits _{x,y,t}(-1)^{|t|(|x|+|z|)+|z|(|x|+|y|)}\Big(\widetilde{as_{\mathcal{A}}}(\alpha(z),x y,\alpha(t))+\alpha^{2}(z)((x y)\alpha(t))\Big)\nonumber\\
\bullet && \sum \limits _{x,y,t}(-1)^{|y|(|x|+|t|)}\alpha^{2}(t)((y x)\alpha(z))\nonumber\\
&&=-\sum \limits _{x,y,t}(-1)^{|y|(|x|+|t|)}\Big(\widetilde{as_{\mathcal{A}}}(\alpha(t),y x,\alpha(z))-(\alpha(t)(y x))\alpha^{2}(z)\Big)\nonumber\\
\bullet && -\sum \limits _{x,y,t}(-1)^{|t||x|}\alpha(x y)(\alpha(t)\alpha(z))\nonumber\\
&&=\sum \limits _{x,y,t}(-1)^{|t||x|}\Big(\widetilde{as_{\mathcal{A}}}(x y,\alpha(t),\alpha(z))-((x y)\alpha(t))\alpha^{2}(z)\Big)\nonumber\\
\bullet && -\sum \limits _{x,y,t}(-1)^{|y|(|t|+|x|)+|z|(|x|+|y|+|t|)}(\alpha(z)\alpha(t))\alpha(y x)\nonumber\\
&&=-\sum \limits _{x,y,t}(-1)^{|y|(|t|+|x|)+|z|(|x|+|y|+|t|)}\Big(\widetilde{as_{\mathcal{A}}}(\alpha(z),\alpha(t),y x)+\alpha^{2}(z)(\alpha(t)(y x))\Big)\nonumber\\
\bullet && -\sum \limits _{x,y,t}(-1)^{|t||y|+|z|(|x|+|y|+|t|)}(\alpha(z)\alpha(t))\alpha(x y)\nonumber\\
&&=-\sum \limits _{x,y,t}(-1)^{|t||y|+|z|(|x|+|y|+|t|)}\Big(\widetilde{as_{\mathcal{A}}}(\alpha(z),\alpha(t),x y)+\alpha^{2}(z)(\alpha(t)(x y))\Big)\nonumber\\
\end{eqnarray*}
\begin{eqnarray}\label{idasso}
\bullet && -\sum \limits _{x,y,t}(-1)^{|x|(|y|+|t|)}\alpha(y x)(\alpha(t)\alpha(z))\nonumber\\
&&=\sum \limits _{x,y,t}(-1)^{|x|(|y|+|t|)}\Big(\widetilde{as_{\mathcal{A}}}(y x,\alpha(t),\alpha(z))-((y x)\alpha(t))\alpha^{2}(z)\Big),
\end{eqnarray}
or
\begin{eqnarray*}
&&\sum \limits _{x,y,t}\Big((-1)^{|x|(|t|+|y|)+|z|(|x|+|y|+|t|)}\alpha^{2}(z)((y x)\alpha(t))+(-1)^{|t||y|}(\alpha(t)(x y))\alpha^{2}(z)\\
&&+(-1)^{|t|(|x|+|z|)+|z|(|x|+|y|)}\alpha^{2}(z)((x y)\alpha(t))+(-1)^{|y|(|x|+|t|)}(\alpha(t)(y x))\alpha^{2}(z)\\
&&-(-1)^{|t||x|}((x y)\alpha(t))\alpha^{2}(z)-(-1)^{|y|(|t|+|x|)+|z|(|x|+|y|+|t|)}\alpha^{2}(z)(\alpha(t)(y x))\\ &&-(-1)^{|t||y|+|z|(|x|+|y|+|t|)}\alpha^{2}(z)(\alpha(t)(x y))-(-1)^{|x|(|y|+|t|)}((y x)\alpha(t))\alpha^{2}(z)\Big)\\
&&=(-1)^{|x||y|+|z|(|x|+|y|+|t|)}[\alpha^{2}(z),\widetilde{as_{\mathcal{A}}}(y,t,x)]+(-1)^{|y|(|x|+|t|)+|z|(|x|+|y|+|t|)}[\alpha^{2}(z),\widetilde{as_{\mathcal{A}}}(t,y,x)]\\
&&+(-1)^{|t||y|+|z|(|x|+|y|+|t|)}[\alpha^{2}(z),\widetilde{as_{\mathcal{A}}}(t,x,y)]+(-1)^{|t|(|x|+|y|)+|z|(|x|+|y|+|t|)}[\alpha^{2}(z),\widetilde{as_{\mathcal{A}}}(x,t,y)]\\
&&+(-1)^{|x||t|+|z|(|x|+|y|+|t|)}[\alpha^{2}(z),\widetilde{as_{\mathcal{A}}}(x,y,t)]+(-1)^{|x|(|y|+|t|)+|z|(|x|+|y|+|t|)}[\alpha^{2}(z),\widetilde{as_{\mathcal{A}}}(y,x,t)].
\end{eqnarray*}
Note that
\begin{eqnarray}\label{crochet1}
(-1)^{|x||y|+|z|(|x|+|y|+|t|)}[\alpha^{2}(z),\widetilde{as_{\mathcal{A}}}(y,t,x)]
&=&(-1)^{|x||y|+|z|(|x|+|y|+|t|)}[\alpha^{2}(z),(y t)\alpha(x)-\alpha(y)(t x)]\nonumber\\
&=&(-1)^{|x||y|+|z|(|x|+|y|+|t|)}\Big( \alpha^{2}(z)((y t)\alpha(x))\nonumber\\
&&-\alpha^{2}(z)(\alpha(y)(t x))-(-1)^{|z|(|x|+|y|+|t|)}((y t)\alpha(x))\alpha^{2}(z)\nonumber\\
&&+(-1)^{|z|(|x|+|y|+|t|)}(\alpha(y)(t x))\alpha^{2}(z)\Big),\nonumber\\
\end{eqnarray}
and
\begin{eqnarray}\label{crochet2}
(-1)^{|y|(|x|+|t|)+|z|(|x|+|y|+|t|)}[\alpha^{2}(z),\widetilde{as_{\mathcal{A}}}(t,y,x)]
&=&(-1)^{|y|(|x|+|t|)+|z|(|x|+|y|+|t|)}[\alpha^{2}(z),(t y)\alpha(x)-\alpha(t)(y x)]\nonumber\\
&=&(-1)^{|y|(|x|+|t|)+|z|(|x|+|y|+|t|)}\Big( \alpha^{2}(z)((t y)\alpha(x))\nonumber\\
&&-\alpha^{2}(z)(\alpha(t)(y x))-(-1)^{|z|(|x|+|y|+|t|)}((t y)\alpha(x))\alpha^{2}(z)\nonumber\\
&&+(-1)^{|z|(|x|+|y|+|t|)}(\alpha(t)(y x))\alpha^{2}(z)\Big).
\end{eqnarray}
The desired condition $(\ref{id1})$ now follows from $(\ref{id2})$, $(\ref{idasso})$ and $(\ref{crochet1})$.
\end{proof}
\begin{proof} $($Theorem $(\ref{has}))$ Let $(\mathcal{A},\mu,\alpha)$ be a Hom-alternative superalgebra. To show that it is Hom-Jordan-admissible, it
suffices to prove the Hom-Jordan super-identity $(\ref{hjsuper})$ for its plus Hom-superalgebra $\mathcal{A}^{+}=(\mathcal{A},\ast,\alpha)$.\\
Using again the super-alternativity of $\widetilde{as_{\mathcal{A}}}$, this implies that
$$(\widetilde{as_{\mathcal{A}}}\circ \theta)(x y,\alpha(z),\alpha(t))=0$$
and
$$(\widetilde{as_{\mathcal{A}}}\circ \theta)(y x,\alpha(z),\alpha(t))=0$$
for any permutation $\theta$ on three letters. Since
$$[\alpha^{2}(z),(-1)^{|x||y|+|z|(|x|+|y|+|t|)}(\widetilde{as_{\mathcal{A}}}(y,t,x)+(-1)^{|y||t|}\widetilde{as_{\mathcal{A}}}(t,y,x))] = 0,$$
$$[\alpha^{2}(z),(-1)^{|t||y|+|z|(|x|+|y|+|t|)}(\widetilde{as_{\mathcal{A}}}(t,x,y)+(-1)^{|t||x|}\widetilde{as_{\mathcal{A}}}(x,t,y))] = 0,$$
and
$$[\alpha^{2}(z),(-1)^{|x||t|+|z|(|x|+|y|+|t|)}(\widetilde{as_{\mathcal{A}}}(x,y,t)+(-1)^{|x||y|}\widetilde{as_{\mathcal{A}}}(y,x,t))]=0.$$
As well, it follows from Lemma $(\ref{uti})$ that
$$\sum \limits _{x,y,t}4(-1)^{|t|(|x|+|z|)} \widetilde{as_{\mathcal{A}^{+}}}(x \ast y,\alpha(z),\alpha(t))=0,$$
from which the desired Hom-Jordan super-identity for $\mathcal{A}^{+}$ $(\ref{hjsuper})$ follows.
\end{proof}
The following constructions results are the analogues of Theorems $(\ref{induced A})$ and $(\ref{derived A})$ for Hom-Jordan and Hom-Jordan-admissible superalgebras.
\begin{thm}\label{iduced}\ \begin{enumerate}
\item Let $(\mathcal{A},\mu,\alpha)$ be a Hom-Jordan superalgebra and $\beta: \mathcal{A} \longrightarrow \mathcal{A}$ be an even Jordan superalgebra endomorphism. Then $\mathcal{A}_{\beta}=(\mathcal{A},\mu_{\beta}=\beta \circ \mu,\beta\alpha)$ is a Hom-Jordan superalgebra.
\item Let $(\mathcal{A},\mu,\alpha)$ be a Hom-Jordan superalgebra. Then the derived Hom-superalgebra \\$\mathcal{A}^{n}=(\mathcal{A},\mu^{(n)}=\alpha^{2^{n}-1} \circ \mu,\alpha^{2^{n}})$ is also a Hom-Jordan superalgebra for each $n \geq 0$.
\end{enumerate}
\end{thm}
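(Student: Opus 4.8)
The plan is to handle the two assertions exactly as the induced and derived construction results were handled in the Hom-alternative and Hom-Malcev cases (Theorems $(\ref{induced A})$, $(\ref{inducedMALCEV})$, Corollary $(\ref{derived A})$): in each case I reduce the Hom-Jordan super-identity $(\ref{HOM-SUPER-IDE})$ for the new Hom-superalgebra to the very same identity for $\mathcal{A}$ by pulling a power of the twisting map out of the signed cyclic sum. The two ingredients needed are a "twist-out-front" law for the Hom-associator of the new algebra and a "pushforward" law expressing $\widetilde{as_{\mathcal{A}}}$ evaluated on twisted arguments as the twist of $\widetilde{as_{\mathcal{A}}}$.

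For part $(1)$, first note that $\mu_{\beta}=\beta\circ\mu$ is super-commutative: since $\mu$ is super-commutative and $\beta$ is even and linear, $\mu_{\beta}(x,y)=\beta(\mu(x,y))=(-1)^{|x||y|}\beta(\mu(y,x))=(-1)^{|x||y|}\mu_{\beta}(y,x)$. Next, exactly as in the proof of Theorem $(\ref{induced and deriv flexi})$, the hypothesis $\beta\circ\mu=\mu\circ\beta^{\otimes2}$ applied twice yields the map identity $\widetilde{as_{\mathcal{A}_{\beta}}}=\beta^{2}\circ\widetilde{as_{\mathcal{A}}}$, and the compatibility of $\beta$ with the structure yields the pushforward law $\widetilde{as_{\mathcal{A}}}\circ\beta^{\otimes3}=\beta\circ\widetilde{as_{\mathcal{A}}}$, the Hom-associator analogue of $(\ref{car hom-supJaco})$. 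Combining these two relations on the arguments appearing in $(\ref{HOM-SUPER-IDE})$ gives
$$\widetilde{as_{\mathcal{A}_{\beta}}}(\mu_{\beta}(x,y),\beta\alpha(z),\beta\alpha(t))=\beta^{3}\big(\widetilde{as_{\mathcal{A}}}(\mu(x,y),\alpha(z),\alpha(t))\big).$$
Since $\beta$ is even it preserves every parity, hence commutes with the sign $(-1)^{|t|(|x|+|z|)}$ and with the cyclic sum over $(x,y,t)$; therefore the whole left-hand side of the Hom-Jordan super-identity for $\mathcal{A}_{\beta}$ equals $\beta^{3}$ applied to the left-hand side of $(\ref{HOM-SUPER-IDE})$ for $\mathcal{A}$, which vanishes. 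Thus $\mathcal{A}_{\beta}$ is a Hom-Jordan superalgebra.

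For part $(2)$ the argument is cleaner, since the twisting map of $\mathcal{A}^{n}$ is a power of $\alpha$ and so commutes with everything. Super-commutativity of $\mu^{(n)}=\alpha^{2^{n}-1}\circ\mu$ is immediate. Using multiplicativity of $\alpha$ one computes, just as for $(\ref{car hom-deri})$, the transformation law $\widetilde{as_{\mathcal{A}^{n}}}=\alpha^{2(2^{n}-1)}\circ\widetilde{as_{\mathcal{A}}}$ together with the iterated pushforward $\widetilde{as_{\mathcal{A}}}(\alpha^{k}a,\alpha^{k}b,\alpha^{k}c)=\alpha^{k}\,\widetilde{as_{\mathcal{A}}}(a,b,c)$. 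Substituting the arguments $\mu^{(n)}(x,y)=\alpha^{2^{n}-1}\mu(x,y)$, $\alpha^{2^{n}}(z)$, $\alpha^{2^{n}}(t)$ and extracting the common power gives
$$\widetilde{as_{\mathcal{A}^{n}}}(\mu^{(n)}(x,y),\alpha^{2^{n}}(z),\alpha^{2^{n}}(t))=\alpha^{3(2^{n}-1)}\big(\widetilde{as_{\mathcal{A}}}(\mu(x,y),\alpha(z),\alpha(t))\big),$$
so the cyclic sum for $\mathcal{A}^{n}$ is $\alpha^{3(2^{n}-1)}$ applied to $(\ref{HOM-SUPER-IDE})$ for $\mathcal{A}$, which is zero; alternatively one reduces to the case $n=1$ through the identity $\mathcal{A}^{n+1}=(\mathcal{A}^{n})^{1}$.

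The bulk of the work is the routine but bookkeeping-heavy verification of the two Hom-associator transformation laws and the check that the sign and the cyclic structure are unaffected when the even maps $\beta$ and $\alpha$ are pulled out front. The genuinely delicate point is the pushforward law $\widetilde{as_{\mathcal{A}}}\circ\beta^{\otimes3}=\beta\circ\widetilde{as_{\mathcal{A}}}$ of part $(1)$: this is the step where the interaction of $\beta$ with the twisting map $\alpha$ (beyond mere preservation of $\mu$) must be invoked, exactly as in the analogous step $(\ref{Jacobian rong alpha})$ used in the proof of Theorem $(\ref{inducedMALCEV})$.
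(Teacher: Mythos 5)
Your proposal is correct and follows essentially the same route as the paper's own proof: super-commutativity of the twisted product, the relation $\widetilde{as_{\mathcal{A}_{\beta}}}=\beta^{2}\circ\widetilde{as_{\mathcal{A}}}$ (resp. $\widetilde{as_{\mathcal{A}^{n}}}=\alpha^{2(2^{n}-1)}\circ\widetilde{as_{\mathcal{A}}}$) together with the pushforward law $\widetilde{as_{\mathcal{A}}}\circ\beta^{\otimes3}=\beta\circ\widetilde{as_{\mathcal{A}}}$, and then extraction of $\beta^{3}$ (resp. $\alpha^{3(2^{n}-1)}$) from the signed cyclic sum to reduce to the Hom-Jordan super-identity for $\mathcal{A}$. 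The delicate point you flag about $\beta$ interacting with $\alpha$ in the pushforward law is present in the paper's argument as well, so nothing in your write-up departs from it.
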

\begin{proof} For the first assertion, first note that $\mu_{\beta}=\beta \circ \mu$ is super-commutative. To prove the Hom-Jordan super-identity $(\ref{HOM-SUPER-IDE})$ in $\mathcal{A}_{\beta}$, regard $(\mathcal{A},\mu)$ as the Hom-superalgebra $(\mathcal{A},\mu,Id)$. Then for all
$x,y,z,t \in \mathcal{H}(\mathcal{A})$ we have:
\begin{eqnarray*}
\sum \limits _{x,y,t}(-1)^{|t|(|x|+|z|)}\widetilde{ as_{\mathcal{A}_{\beta}}}(\mu_{\beta}(x,y),\beta\alpha(z),\beta\alpha(t))
&=&\sum \limits _{x,y,t}(-1)^{|t|(|x|+|z|)} \widetilde{as_{\mathcal{A}_{\beta}}}(\beta(\mu(x,y)),\beta\alpha(z),\beta\alpha(t))\\
&=&\beta^{2}\Big(\sum \limits _{x,y,t} (-1)^{|t|(|x|+|z|)} \widetilde{as_{\mathcal{A}}}(\beta(\mu(x,y)),\beta\alpha(z),\beta\alpha(t))\Big)\\
&=&\beta^{3}\Big(\sum \limits _{x,y,t} (-1)^{|t|(|x|+|z|)} \widetilde{as_{\mathcal{A}}}(\mu(x,y),\alpha(z),\alpha(t))\Big)\\
&=&0.
\end{eqnarray*}
This shows that $\mathcal{A}_{\beta}$ is a Hom-Jordan superalgebra.\\
For the second assertion, first note that $\mu^{(n)}=\alpha^{2^{n}-1} \circ \mu$ is super-commutative. To prove the Hom-Jordan super-identity $(\ref{HOM-SUPER-IDE})$ in $\mathcal{A}^{n}$, we compute
\begin{eqnarray*}
&&\sum \limits _{x,y,t}(-1)^{|t|(|x|+|z|)} \widetilde{as_{\mathcal{A}^{n}}}(\mu^{(n)}(x,y),\alpha^{2^{n}}(z),\alpha^{2^{n}}(t))\\
&=&\alpha^{2(2^{n}-1)}\Big(\sum \limits _{x,y,t}(-1)^{|t|(|x|+|z|)} \widetilde{as_{\mathcal{A}}}(\alpha^{2^{n}-1}(\mu(x,y)),\alpha^{2^{n}}(z),\alpha^{2^{n}}(t))\Big)\\
&=&\alpha^{3(2^{n}-1)}\Big(\sum \limits _{x,y,t} (-1)^{|t|(|x|+|z|)} \widetilde{as_{\mathcal{A}}}(\mu(x,y),\alpha(z),\alpha(t))\Big)\\
&=&0.
\end{eqnarray*}

This shows that $\mathcal{A}^{n}$ is a Hom-Jordan superalgebra.
\end{proof}
\subsection{Examples of Hom-Jordan superalgebras} We construct examples of Hom-Jordan superalgebras according to Theorem $(\ref{iduced})$.
\begin{exa}\label{ex-jordan1}$($Hom-Jordan superalgebra of dimension $3)$. We consider the $3$-dimensional Kaplansky superalgebra $K_{3}=\mathbb{K}e \oplus (\mathbb{K}x+\mathbb{K}y)$ (see \cite{Kapl-example3Jordan}), with characteristic of $\mathbb{K}$ is different to $2$. The   product is defined as:
$$\mu(e,e)=e,~\mu(e,x)=\frac{1}{2}x,~\mu(e,y)=\frac{1}{2}y,~\mu (x,y)=e.$$
$K_{3}$ is a simple Jordan superalgebra.\\
Even superalgebra endomorphisms $\alpha$ with respect to the basis $\{e,x,y\}$ are defined by
$$\alpha(e)=e,~~\alpha(x)=\frac{1}{c}x~~\alpha(y)=\frac{1}{c}y,$$
with $c\neq 0$.
According to  Theorem $(\ref{iduced})$, the even linear map $\alpha$ and the following multiplication
$$\mu_{\alpha}(e,e)=e,~\mu_{\alpha}(e,x)=\frac{1}{2c}x,~\mu_{\alpha}(e,y)=\frac{1}{2c}y,~[x,y]_{\alpha}=e.$$
determine a $3$-dimensional Hom-Jordan superalgebra.\\
In general, $(K_{3},\mu_{\alpha})$ is not a Jordan superalgebra. Indeed, we have
\begin{eqnarray*}
  \sum \limits _{e,e,y}(-1)^{|y|(|e|+|x|)}as_{K_{3}}(\mu_{\alpha}(e,e),x,y)
  &=& (\frac{1}{2c}-1)e \\
   &\neq&  0,
\end{eqnarray*}
for $c\neq 0, \frac{1}{2}$. Then $(K_{3},\mu_{\alpha})$ does not satisfy the Jordan super-identity.
\end{exa}
\begin{exa}\label{ex-jorda2}$($Hom-Jordan superalgebra of dimension $4)$. Let $D_{t}=(D_{t})_{0}\oplus (D_{t})_{1}$ (see \cite{Martinez-Jordan}), where $(D_{t})_{0}=span\{e_{1},e_{2}\}$ and $(D_{t})_{1}=span\{x,y\}$ be the $4$-dimensional superalgebra, $t\neq 0$, with the product given by
$$\mu(e_{i},e_{i})=e_{i},~~\mu(e_{1},e_{2})=0,~~\mu(e_{i},x)=\frac{1}{2}x,~~\mu(e_{i},y)=\frac{1}{2}y,~~[x,y]=e_{1}+te_{2},~~i=1,2.$$
This family of Jordan superalgebras $($that depend on the parameter $t)$ corresponds to the family of $17-$dimensional Lie superalgebras $D(2,1,\beta)$.\\
Superalgebra endomorphisms $\alpha$ with respect to the basis $\{e_{1},e_{2},x,y\}$ are defined by
$$\alpha(e_{1})=e_{1},~~\alpha(e_{2})=e_{2},~~\alpha(x)=a x+ by,~~\alpha(y)=c x + \frac{1+bc}{a}y.$$
According to Theorem (\ref{iduced}), the even linear map $\alpha$ and the following multiplication
$$\mu_{\alpha}(e_{i},e_{i})=e_{i},~~\mu_{\alpha}(e_{1},e_{2})=0,~~\mu_{\alpha}(e_{i},x)=\frac{1}{2}ax+\frac{1}{2}by,$$
$$~~\mu_{\alpha}(e_{i},y)=\frac{1}{2}cx+\frac{1}{2}(\frac{1+bc}{a})y,~~\mu_{\alpha}(x,y)=e_{1}+te_{2},~~i=1,2$$
where $a,b,t$ are parameter in $\mathbb{K}^{*}$ and $c \in \mathbb{K}$ satisfying $1+bc\neq 0$,
determine a $4$-dimensional Hom-Jordan superalgebra.\\
Note that $(D_{t},\mu_{\alpha})$ in general is not a Jordan superalgebra. Indeed, we have
\begin{eqnarray*}
  \sum \limits _{e_{1},e_{2},y}(-1)^{|y|(|e_{1}|+|x|)}as_{D_{t}}(\mu_{\alpha}(e_{1},e_{2}),x,y)
  &=& \Big(\frac{1}{2}-\frac{1}{2}(\frac{1+bc}{a})\Big)(e_{1}+te_{2}) \\
   &\neq&  0,
\end{eqnarray*}
for $1+bc\neq 0, a$. So, $(D_{t},\mu_{\alpha})$ does not satisfy the Jordan super-identity.
\end{exa}
\begin{thm}\label{admissible}\ \begin{enumerate}
\item Let $(\mathcal{A},\mu,\alpha )$ be a Hom-Jordan-admissible superalgebra and $\beta:\mathcal{A}\longrightarrow \mathcal{A} $ be an even Jordan-admissible superalgebra endomorphism. Then $\mathcal{A}_{\beta} = (\mathcal{A},\mu_{\beta} , \beta\alpha)$ is a Hom-Jordan-
admissible superalgebra.
\item Let $(\mathcal{A},\mu,\alpha)$ be a Hom-Jordan-admissible superalgebra. Then the derived Hom-superalgebra $\mathcal{A}^{n}=(\mathcal{A},\mu^{(n)}=\alpha^{2^{n}-1} \circ \mu,\alpha^{2^{n}})$ is also a Hom-Jordan-admissible superalgebra for each $n \geq 0$.
\end{enumerate}
\end{thm}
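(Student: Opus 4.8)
The plan is to reduce both assertions to the corresponding statements for Hom-Jordan (not merely admissible) superalgebras, namely Theorem~\ref{iduced}, by exploiting the fact that the \textsf{plus} construction $\mathcal{A}\mapsto\mathcal{A}^{+}$ commutes with both the induced-twisting construction $\mathcal{A}\mapsto\mathcal{A}_{\beta}$ and the derived construction $\mathcal{A}\mapsto\mathcal{A}^{n}$. This parallels the identity $(\mathcal{A}^{n})^{-}=(\mathcal{A}^{-})^{n}$ exploited in the proof of Theorem~\ref{derivMALCEV-ADM}.

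First I would record the two key identities. Write $\ast$ for the super-commutative product of $\mathcal{A}^{+}$ and $\ast_{\beta}$, $\ast^{(n)}$ for the products of $(\mathcal{A}_{\beta})^{+}$ and $(\mathcal{A}^{n})^{+}$ respectively. Since $\ast$ is obtained from $\mu$ by symmetrization and both $\beta$ and $\alpha$ are linear, for all $x,y\in\mathcal{H}(\mathcal{A})$ one has
$$x\ast_{\beta}y=\tfrac{1}{2}\big(\mu_{\beta}(x,y)+(-1)^{|x||y|}\mu_{\beta}(y,x)\big)=\beta\big(\tfrac{1}{2}(\mu(x,y)+(-1)^{|x||y|}\mu(y,x))\big)=\beta(x\ast y),$$
so the plus Hom-superalgebra of $\mathcal{A}_{\beta}$ coincides with the Hom-superalgebra induced from $\mathcal{A}^{+}$ by $\beta$; that is, $(\mathcal{A}_{\beta})^{+}=(\mathcal{A}^{+})_{\beta}$. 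Likewise, from $\mu^{(n)}=\alpha^{2^{n}-1}\circ\mu$ the same symmetrization argument gives $x\ast^{(n)}y=\alpha^{2^{n}-1}(x\ast y)$, whence $(\mathcal{A}^{n})^{+}=(\mathcal{A}^{+})^{n}$.

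Next I would verify that the relevant twisting map is an endomorphism of the Jordan structure on $\mathcal{A}^{+}$. By hypothesis $\mathcal{A}$ is Hom-Jordan-admissible, so $\mathcal{A}^{+}=(\mathcal{A},\ast,\alpha)$ is a Hom-Jordan superalgebra. Moreover, since $\beta\circ\mu=\mu\circ\beta^{\otimes2}$, symmetrizing yields $\beta(x\ast y)=\beta(x)\ast\beta(y)$, so $\beta$ is an even endomorphism of the product $\ast$, which is exactly what Theorem~\ref{iduced}(1) demands of its twisting map. Applying Theorem~\ref{iduced}(1) to the Hom-Jordan superalgebra $\mathcal{A}^{+}$ and the endomorphism $\beta$ shows that $(\mathcal{A}^{+})_{\beta}$ is a Hom-Jordan superalgebra; by the identity above $(\mathcal{A}_{\beta})^{+}=(\mathcal{A}^{+})_{\beta}$, so $\mathcal{A}_{\beta}$ is Hom-Jordan-admissible, proving part (1). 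For part (2), applying Theorem~\ref{iduced}(2) to $\mathcal{A}^{+}$ shows that $(\mathcal{A}^{+})^{n}$ is a Hom-Jordan superalgebra, and since $(\mathcal{A}^{n})^{+}=(\mathcal{A}^{+})^{n}$ this establishes that $\mathcal{A}^{n}$ is Hom-Jordan-admissible.

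I do not expect a serious obstacle here: the genuine work was already absorbed into Theorem~\ref{iduced}, and what remains is bookkeeping showing that the plus construction is compatible with twisting and with passage to derived algebras. The only points requiring care are the elementary linearity and sign verifications that symmetrization commutes with $\beta$ and with $\alpha^{2^{n}-1}$, and that multiplicativity of $\beta$ with respect to $\mu$ transfers to $\ast$; once $(\mathcal{A}_{\beta})^{+}=(\mathcal{A}^{+})_{\beta}$ and $(\mathcal{A}^{n})^{+}=(\mathcal{A}^{+})^{n}$ are in hand, both conclusions are immediate. A direct computation parallel to the proof of Theorem~\ref{iduced}, using $\widetilde{as_{\mathcal{A}_{\beta}^{+}}}=\beta^{2}\circ\widetilde{as_{\mathcal{A}^{+}}}$ and $\widetilde{as_{(\mathcal{A}^{n})^{+}}}=\alpha^{2(2^{n}-1)}\circ\widetilde{as_{\mathcal{A}^{+}}}$, would also work, but the reduction above is shorter.
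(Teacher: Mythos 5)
Your proposal is correct and follows essentially the same route as the paper: both hinge on the identities $(\mathcal{A}_{\beta})^{+}=(\mathcal{A}^{+})_{\beta}$ and $(\mathcal{A}^{n})^{+}=(\mathcal{A}^{+})^{n}$ obtained by symmetrizing $\mu_{\beta}=\beta\circ\mu$ and $\mu^{(n)}=\alpha^{2^{n}-1}\circ\mu$. The only cosmetic difference is that you formally invoke Theorem~\ref{iduced} after checking that $\beta$ is multiplicative for $\ast$, whereas the paper re-runs the same $\beta^{2}$/$\beta^{3}$ (resp.\ $\alpha^{2(2^{n}-1)}$/$\alpha^{3(2^{n}-1)}$) computation inline.
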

\begin{proof} For the first assertion, first note that the plus Hom-superalgebra $(\mathcal{A}_{\beta})^{+}=(\mathcal{A},\ast_{\beta},\beta\alpha)$ satisfies,
for all $x,y \in \mathcal{H}(\mathcal{A})$,
\begin{eqnarray*}
x \ast_{\beta} y&=&\frac{1}{2}(\mu_{\beta}(x,y)+(-1)^{|x||y|}\mu_{\beta}(y,x))\\
&=&\beta \circ \frac{1}{2}(\mu(x,y)+(-1)^{|x||y|}\mu(y,x))\\
&=&\beta \circ (x \ast y).
\end{eqnarray*}
Then $\ast_{\beta}=\beta \circ \ast$.\\
Therefore, we have $(\mathcal{A}_{\beta})^{+}=(\mathcal{A}^{+})_{\beta}$, where $\mathcal{A}^{+}$ is the Hom-Jordan-superalgebra $(\mathcal{A},\ast,\alpha)$. Since $\ast_{\beta}$ is super-commutative, it remains to prove the Hom-Jordan super-identity in $(\mathcal{A}_{\beta})^{+}=(\mathcal{A}^{+})_{\beta}$. We have
\begin{eqnarray*}
&&\sum \limits _{x,y,t}(-1)^{|t|(|x|+|z|)}\widetilde{ as_{(\mathcal{A}^{+})_{\beta}}}(\mu_{\beta}(x,y),\beta\alpha(z),\beta\alpha(t))\\
&=&\beta^{2}\Big(\sum \limits _{x,y,t}(-1)^{|t|(|x|+|z|)} \widetilde{as_{\mathcal{A}^{+}}}(\beta(\mu(x,y)),\beta\alpha(z),\beta\alpha(t))\Big)\\
&=&\beta^{3}\Big(\sum \limits _{x,y,t}(-1)^{|t|(|x|+|z|)} \widetilde{as_{\mathcal{A}}}(\mu(x,y),\alpha(z),\alpha(t))\Big)\\
&=&0.
\end{eqnarray*}
This shows that $(\mathcal{A}_{\beta})^{+}$ satisfies the Hom-Jordan super-identity, so $\mathcal{A}_{\beta}$ is Hom-Jordan-admissible superalgebra.\\
For the second assertion, first note that the plus Hom-superalgebra $(\mathcal{A}^{n})^{+}=(\mathcal{A},\ast^{(n)},\alpha^{2^{n}})$ satisfies,
for all $x,y \in \mathcal{H}(\mathcal{A})$,
\begin{eqnarray*}
x \ast^{(n)} y&=&\frac{1}{2}(\mu^{(n)}(x,y)+(-1)^{|x||y|}\mu^{(n)}(y,x))\\
&=&\alpha^{2^{n}-1} \circ \frac{1}{2}(\mu(x,y)+(-1)^{|x||y|}\mu(y,x))\\
&=&\alpha^{2^{n}-1} \circ (x \ast y).
\end{eqnarray*}
Therefore, we have $(\mathcal{A}^{n})^{+}=(\mathcal{A}^{+})^{n}$, where $\mathcal{A}^{+}$ is the Hom-Jordan superalgebra $(\mathcal{A},\ast,\alpha)$ and $(\mathcal{A}^{n})^{+}$ is its \textsf{nth} derived Hom-superalgebra. Since $\ast^{(n)}$ is super-commutative, it remains to prove the Hom-Jordan super-identity in $(\mathcal{A}^{n})^{+}=(\mathcal{A}^{+})^{n}$. We have
\begin{eqnarray*}
&& \sum \limits _{x,y,t}(-1)^{|t|(|x|+|z|)} \widetilde{as_{(\mathcal{A}^{+})^{n}}}(\mu^{(n)}(x,y),\alpha^{2^{n}}(z),\alpha^{2^{n}}(t))\\
&=&\alpha^{2(2^{n}-1)}\Big(\sum \limits _{x,y,t} (-1)^{|t|(|x|+|z|)} \widetilde{as_{\mathcal{A}^{+}}}(\alpha^{2(2^{n}-1)}(\mu(x,y)),\alpha^{2^{n}}(z),\alpha^{2^{n}}(t))\Big)\\
&=& \alpha^{3(2^{n}-1)}\Big(\sum \limits _{x,y,t} (-1)^{|t|(|x|+|z|)} \widetilde{as_{\mathcal{A}^{+}}}(\mu(x,y),\alpha(z),\alpha(t))\Big)\\
&=&0.
\end{eqnarray*}
This shows that $(\mathcal{A}^{n})^{+}$ is Hom-Jordan superalgebra, so $\mathcal{A}^{n}$ is  a Hom-Jordan-admissible superalgebra.
\end{proof}

\end{document}